\documentclass [11pt,twoside,a4paper]{article}
\usepackage{mathrsfs}
\usepackage{amssymb}
\usepackage{amsthm}
\usepackage{amsfonts}
\usepackage{amsmath}
\usepackage{amstext}
\usepackage{amscd}
\usepackage{dsfont}
\newtheorem{theorem}{Theorem}[section]
\newtheorem{proposition}{Proposition}[section]

\newtheorem{definition}{Definition}[section]

 \textwidth 160mm
 \textheight 240mm
 \setlength{\voffset}{-16mm}
\setlength{\hoffset}{-16mm}

\def\qdl{\textquotedblleft}

\def\BE{\begin{equation}}
\def\EE{\end{equation}}

\newcommand{\bey}{\begin{eqnarray}}
\newcommand{\eey}{\end{eqnarray}}
\newcommand{\beyy}{\begin{eqnarray*}}
\newcommand{\eeyy}{\end{eqnarray*}}

 \setlength{\parindent}{0.75cm}
 \setlength{\parskip}{0.15cm}

\vfuzz2pt 
\hfuzz2pt 

\begin{document}

\title{$\ \ $ Classification of non-CSC extremal K\"{a}hler metrics on K-surfaces $S^2_{\{\alpha\}}$ and $S^2_{\{\alpha,\beta\}}$ }
\author{ Yingjie Meng, Zhiqiang Wei }

\date{}
\maketitle

\begin{abstract}
 We commonly refer to  an extremal K\"{a}hler metric with finitely many singularities on a compact
 Riemann surface as an HCMU (Hessian of the Curvature of the Metric is Umbilical) metric. In this study, we specifically classify  non-CSC HCMU metrics on the K-surfaces $S^2_{\{\alpha\}}$ and $S^2_{\{\alpha,\beta\}}$.\par
 \vspace*{2mm}

\noindent{\bf Key words}\hskip3mm  extremal K\"{a}hler metric, Conical singularity, Cusp singularity.

\vspace*{2mm}
\noindent{\bf 2020 MR Subject Classification:}\hskip3mm 58E11, 53C55.

\end{abstract}

\section{Introduction}
 As is well known, within a given K\"{a}hler class of a compact K\"{a}hler manifold $\mathcal{M}$, an extremal K\"{a}hler metric, introduced by E. Calabi in \cite{Ca}, is the critical point of the following Calabi energy functional
$$
\mathcal{C}(g)=\int_{\mathcal{M}} R^2 dg,
$$
where $R$ denotes the scalar curvature of the metric $g$ in the K\"{a}hler class. The objective is to determine the \qdl best" metric within a fixed K\"{a}hler class. The Euler-Lagrange equations for the functional $\mathcal{C}(g)$ are given by $R_{,\alpha\beta}=0$ for all indices $\alpha, \beta$, with $R_{,\alpha\beta}$ being $(2,0)$ part of the second covariant derivative of $R$. When $\mathcal{M}$ is a compact Riemann surface, Calabi \cite{Ca} proved that an extremal K\"{a}hler metric is of constant scalar curvature (CSC) metric. This coincides with the classical uniformization theorem, which asserts that every Riemann surface admits a CSC metric.\par

A natural question then arises: do extremal K\"{a}hler metrics with singularities on a compact Riemann surface still possess a CSC property? This inquiry represents an attempt to generalize the classical uniformization theorem to a K-surface. The existence or nonexistence of CSC metrics on surfaces with conical singularities has been extensively studied in works such as \cite{CWWX,E1,E2,LSX,T,LT,UY} and further references cited therein. In \cite{Ch99},  X.X. Chen made a significant breakthrough by presenting an example of a non-CSC extremal K\"{a}hler metric with singularities. He also classified all extremal K\"{a}hler metrics on compact Riemann surfaces possessing finite cusp singularities and having finite area and energy.
\begin{theorem}[\cite{Ch99}]\label{Chen99}
Let $\mathcal{M}$ be a compact Riemann surface, $g$ an extremal K\"{a}hler metric with finite energy and area on $\mathcal{M}\setminus\{p_{j}\}_{j=1,\ldots,n}$, and $K$ the Gaussian curvature of $g$. Suppose that all singularities $p_{j}$ are weak cusps. Then the following classification holds.\par
(i) If genus$(\mathcal{M})\geq1$, then $K\equiv Const$.\par
(ii) If $\mathcal{M}=S^{2}$ and $n\geq 3$, then $K\equiv Const$.\par
(iii) If $\mathcal{M}=S^{2}$ and $n=2$, then there is no extremal K\"{a}hler metric.\par
(iv) If $\mathcal{M}=S^{2}$ and $n=1$, then $g$ is a rotationally symmetric metric that is determined uniquely by the total area.\par
In particular, $g$ is a metric with cusps.
\end{theorem}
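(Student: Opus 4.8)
\medskip
\noindent\textbf{Proof proposal.}\quad
The plan is to convert the extremal equation into the existence of a single holomorphic vector field on the \emph{compact} surface $\mathcal{M}$, and then to weigh the topological constraints on such a field against the local geometry near the weak cusps. In a conformal coordinate with $g=e^{2\varphi}|dz|^{2}$ and scalar curvature $R=2K$, the only Christoffel symbol is $\Gamma^{z}_{zz}=2\varphi_{z}$, so $R_{,zz}=e^{2\varphi}\,\partial_{z}\bigl(e^{-2\varphi}R_{z}\bigr)$ and the Euler--Lagrange equation $R_{,\alpha\beta}=0$ reduces to $\partial_{z}\bigl(e^{-2\varphi}R_{z}\bigr)=0$; equivalently, the $(1,0)$ vector field
\[
V\;:=\;e^{-2\varphi}\,\frac{\partial R}{\partial\bar z}\,\frac{\partial}{\partial z}
\]
is holomorphic on $\mathcal{M}\setminus\{p_{j}\}$, and $\operatorname{Im}V$ is a Killing field of $g$. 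Thus a non-CSC extremal metric is exactly one with $V\not\equiv 0$; in that case $K$ is non-constant, its critical set is the zero set of $V$, and one records the structural identity that $|\nabla_{g}K|_{g}^{2}$ equals a polynomial $P(K)$ in $K$ --- a cubic when $\mathcal{M}=S^{2}$, since there the $\operatorname{Im}V$-invariant (rotationally symmetric) reduction forces $R$ to be an affine function of the moment map --- whose roots are the critical values of $K$.

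The key analytic ingredient is an extension lemma: using the finite-area and finite-energy hypotheses together with the weak-cusp assumption, $V$ extends to a holomorphic vector field on all of $\mathcal{M}$ with $V(p_{j})=0$ for each $j$. The mechanism is that near a weak cusp $g$ is asymptotic to a genuine cusp metric, so $e^{-2\varphi}$ decays like $|z|^{2}(\log|z|)^{-2}$, while the hypotheses keep $K$ --- hence $|\nabla_{g}K|_{g}^{2}=P(K)$ --- bounded there; since the Euclidean size of $V$ is comparable to $e^{-2\varphi}\,|\nabla_{g}R|_{g}^{2}$, it tends to $0$ at $p_{j}$, so the holomorphic field $V$ is bounded near $p_{j}$ and extends, necessarily with $V(p_{j})=0$, by the removable-singularity theorem. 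Making precise the asymptotics of $\varphi$ and $K$ at a weak cusp, and confirming that $V$ does not blow up, is the first of the two main technical hurdles.

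With the extension available, the classification becomes a dichotomy for holomorphic vector fields on $\mathcal{M}$: a nonzero one has exactly $2-2\,\mathrm{genus}(\mathcal{M})$ zeros counted with multiplicity, and it must vanish at each $p_{j}$. If $\mathrm{genus}(\mathcal{M})\ge 2$ there is no nonzero such field, so $V\equiv 0$ and $K\equiv\mathrm{const}$; if $\mathrm{genus}(\mathcal{M})=1$ a nonzero $V$ is nowhere zero, contradicting $V(p_{j})=0$, so again $K\equiv\mathrm{const}$ --- this is (i). If $\mathcal{M}=S^{2}$, a nonzero $V$ has exactly two zeros with multiplicity, which must contain the $n$ distinct points $p_{j}$; hence $n\ge 3$ forces $V\equiv 0$ and $K\equiv\mathrm{const}$, giving (ii). For $n\le 2$ a nonzero $V$ is not excluded on topological grounds, so here I would argue geometrically: the nonzero Killing field $\operatorname{Im}V$ has zeros, so after a M\"obius normalization $g$ is rotationally symmetric about the axis through them, and writing $g=\phi^{-1}dx^{2}+\phi\,d\theta^{2}$ with $x$ the moment map, extremality forces the profile $\phi$ to be a cubic polynomial whose roots include the two pole values of $x$. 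A simple root of $\phi$ at a pole yields a cone point there (smooth exactly when $|\phi'|=2$), while a \emph{double} root yields a cusp. For $n=2$ one would need $\phi$ to have double roots at both poles, impossible for a cubic unless $K$ is constant (contradicting $V\not\equiv 0$); and since the finite-area hypothesis independently excludes the only candidate CSC metric on $S^{2}\setminus\{p_{1},p_{2}\}$, namely the flat cylinder, there is no extremal metric at all --- this is (iii). For $n=1$, taking $\phi$ cubic with a double root (the cusp) at one pole and a simple root with the smoothness condition $|\phi'|=2$ (the cap) at the other leaves one free parameter, and since the total area equals $2\pi$ times the length of the moment-map interval, the metric is determined uniquely by that area --- which is (iv). I expect the ODE analysis forbidding two cusps in (iii), together with the matching absence of a finite-area CSC metric there, to be the second and more delicate point.

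Finally, the metrics that occur genuinely have cusps: in the constant-curvature cases with $n\ge 1$, Gauss--Bonnet gives $\chi(\mathcal{M}\setminus\{p_{j}\})<0$, so the finite-area CSC metric carrying the prescribed weak-cusp ends is the complete hyperbolic metric, whose ends are honest cusps; and in case (iv) the surviving pole is a genuine cusp by the double-root analysis above. Hence in every case $g$ is a metric with cusps, as asserted.
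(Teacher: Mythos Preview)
The paper does not prove this statement at all: Theorem~\ref{Chen99} is quoted verbatim from Chen's 1999 paper \cite{Ch99} as background, and the present paper only \emph{uses} it (at the start of Sections~3 and~4). So there is no ``paper's own proof'' to compare against.

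That said, your proposal follows the line of Chen's original argument and of the later refinements that the present paper summarizes in Section~2.1: convert the extremal equation into holomorphy of the gradient field $\nabla K$, extend it across the punctures using the finite area/energy hypotheses, and then read off the classification from the Poincar\'e--Hopf count for holomorphic vector fields on a compact Riemann surface together with the rotationally symmetric ODE on $S^{2}$. This is the right skeleton.

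A few details in your write-up need correction. First, the identity $|\nabla_{g}K|_{g}^{2}=P(K)$ with $P$ a cubic is not special to $S^{2}$; it is a local consequence of the HCMU equation for any non-CSC extremal metric (this is exactly the content of system~(\ref{sys0}) in the paper, where the right-hand cubic is $-\frac{1}{3}(K-K_{1})(K-K_{2})(K+K_{1}+K_{2})$). Deriving it is one of the substantive steps in Chen's work, not a throwaway remark. Second, your decay estimate at a cusp is slightly garbled: for $e^{2\varphi}\sim |z|^{-2}(\log|z|)^{-2}$ one has $e^{-2\varphi}\sim |z|^{2}(\log|z|)^{2}$, not $(\log|z|)^{-2}$; and the Euclidean length of $V$ is $e^{-2\varphi}|R_{\bar z}|\asymp e^{-\varphi}\,|\nabla_{g}R|_{g}$, not $e^{-2\varphi}|\nabla_{g}R|_{g}^{2}$. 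With the correct exponents the conclusion $|V|\to 0$ at each $p_{j}$ still follows, so the removable-singularity step survives. Third, in the language the present paper adopts (the dual character $1$-form $\omega$ rather than the vector field $V$), a cusp is a \emph{simple pole} of $\omega$ with positive real residue --- equivalently a simple zero of $V$ --- which is exactly what your extension lemma would produce; you may find it cleaner to phrase the $n=1,2$ analysis on $S^{2}$ in terms of the residues of $\omega$ rather than the moment-map cubic $\phi$, since that is the formalism the paper uses throughout.
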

In \cite{WZ}, G.F. Wang and X.H. Zhu discovered that every singular point of an extremal metric with finite energy and area on a singular surface is either a weak cusp or a conical singularity. They then generalized \textbf{Theorem} \ref{Chen99} as follows.
\begin{theorem}[\cite{WZ}]\label{W-Z}
Let $\mathcal{M}$ be a compact Riemann surface, $g$ an extremal K\"{a}hler metric with finite energy and area on $\mathcal{M}\setminus\{p_{j}\}_{j=1,\ldots,n}$, and $K$ the Gaussian curvature of $g$. Then $g$ is a conical metric with singular angle $2\pi\alpha_{j}~(j=1,\ldots,n)$(which may include some weak cusps). Furthermore, if all singular angles satisfy $2\pi\alpha_{j}<\pi$, then the following classification holds.\par
(i) If genus$(\mathcal{M})\geq1$, then $K\equiv Const$.\par
(ii) If $\mathcal{M}=S^{2}$ and $n\geq 3$, then $K\equiv Const$.\par
(iii) If $\mathcal{M}=S^{2}$ and $n=2$, there are two cases.\par
~~~(a) If both singular points are cusp, then there is no extremal K\"{a}hler metric.\par
~~~(b) If one of the singular points is not a cusp, then $g$ is a rotationally symmetric extremal K\"{a}hler metric that is determined uniquely by the total area and two angles $2\pi\alpha_{j}$.\par
(iv) If $\mathcal{M}=S^{2}$ and $n=1$, then $g$ is a rotationally symmetric metric that is determined uniquely by the total area and angle $2\pi\alpha$.\par
\end{theorem}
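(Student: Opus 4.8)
The plan is to convert the extremal equation into a statement about a holomorphic vector field on the compact surface and then to use the smallness of the cone angles to pin that field down. Working locally in a conformal coordinate $z$, write $g=e^{2\phi}|dz|^2$ and recall that on a Riemann surface the extremal equation $R_{,zz}=0$ is equivalent to the assertion that the $(1,0)$-field $V:=g^{z\bar z}(\partial_{\bar z}K)\,\partial_z=\nabla^{1,0}K$ is holomorphic on $\mathcal{M}\setminus\{p_1,\dots,p_n\}$, together with Calabi's observation that for an extremal metric the real field $J\nabla K=\mathrm{Im}\,V$ is Killing there. Step one is the local analysis at each $p_j$: using that $g$ has finite area and finite energy, one shows as in the cited works that in a suitable conformal coordinate centred at $p_j$ the metric has the conical normal form $e^{2\phi}\asymp |z|^{2(\alpha_j-1)}$ with $\alpha_j\in[0,1)$ ($\alpha_j=0$ being a weak cusp); this already yields the ``furthermore'' clause. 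Step two is to upgrade $V$ to a genuine holomorphic vector field on the \emph{compact} surface $\mathcal{M}$: since $V^z=g^{z\bar z}\partial_{\bar z}K\asymp|z|^{2(1-\alpha_j)}\partial_{\bar z}K$ near $p_j$, one needs enough control on $\partial_{\bar z}K$ at the cone point, which one extracts from the finite-energy bound by elliptic estimates on the model cone; the hypothesis $\alpha_j<\tfrac12$ is what makes this estimate sharp enough to conclude both that $V$ extends holomorphically across $p_j$ and that $V(p_j)=0$.

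Granting steps one and two, I would deduce the classification from the geometry of $T\mathcal{M}$ plus the resulting symmetry. If $\mathrm{genus}(\mathcal{M})\ge 1$, then either $H^0(\mathcal{M},T\mathcal{M})=0$ (genus $\ge2$) or the only holomorphic vector fields are nowhere vanishing (genus $1$); since $V$ must vanish at each $p_j$ (and, if $n=0$, at a maximum of $K$ by Calabi's classical argument), $V\equiv0$, so $dK\equiv0$ off the $p_j$ and $K\equiv\mathrm{const}$, which is (i). From now on $\mathcal{M}=S^2$. If $V\equiv0$ we again get $K\equiv\mathrm{const}$, so suppose $V\not\equiv0$. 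Then $V$ is a nonzero section of $TS^2\cong\mathcal{O}(2)$, hence has exactly two zeros counted with multiplicity, and every $p_j$ is among them; in particular $n\le2$. Thus for $n\ge3$ necessarily $V\equiv0$ and $K\equiv\mathrm{const}$, proving (ii).

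It then remains to analyse $\mathcal{M}=S^2$ with $n\le2$ and $V\not\equiv0$. Here $\mathrm{Im}\,V$ is a nontrivial holomorphic Killing field which, by step two, extends across the $p_j$; its flow generates a holomorphic isometric $S^1$-action on $(S^2,g)$ (it cannot enlarge to $SO(3)$, as that would force $g$ round, i.e. CSC). Choosing the conformal coordinate so that this $S^1$ acts by $z\mapsto e^{i\theta}z$ with fixed points $0,\infty$, the metric is rotationally symmetric and $K$ depends on a single variable; inserting this into the HCMU equation reduces it to a second-order ODE for the profile, whose solution is determined by which of $0,\infty$ are singular points, the cone angles prescribed there, and the total area. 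For $n=1$ this produces the unique rotationally symmetric metric of (iv); for $n=2$ with at least one non-cusp singularity, solving the ODE yields the family of (iii)(b); and for $n=2$ with two cusps the profile ODE admits no solution matching cusp asymptotics at both poles — equivalently one reduces to Chen's Theorem~\ref{Chen99}(iii) — so no extremal K\"{a}hler metric exists, which is (iii)(a).

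The main obstacle is the local step. Establishing that the singularities are conical and, above all, that $V$ extends holomorphically across them with $V(p_j)=0$ is where $2\pi\alpha_j<\pi$ must be used essentially rather than cosmetically: for larger cone angles $V$ need not vanish at $p_j$, an honest $S^1$-symmetry can fail, and non-rotationally-symmetric non-CSC extremal metrics do occur, so any argument that does not exploit the smallness of the angles in the a priori estimate for $K$ near $p_j$ cannot be correct. Once that estimate is in hand, the remainder is the by-now-standard reduction of rotationally symmetric HCMU metrics to an ordinary differential equation and its explicit integration.
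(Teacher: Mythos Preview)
This theorem is not proved in the present paper: it is quoted verbatim from Wang--Zhu \cite{WZ} as background (Theorem~\ref{W-Z} carries the citation \cite{WZ} and no proof is given here), so there is no proof in this paper against which to compare your proposal.

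That said, your outline is a faithful sketch of the Wang--Zhu strategy. The reduction to a holomorphic (indeed meromorphic) vector field $\nabla^{1,0}K$, the use of finite area and energy to obtain the conical normal form at each $p_j$, the role of the hypothesis $\alpha_j<\tfrac12$ in forcing $V$ to extend holomorphically with $V(p_j)=0$, and the subsequent count of zeros of a holomorphic section of $T\mathcal{M}$ to force $V\equiv0$ when $\mathrm{genus}\ge1$ or $n\ge3$ are exactly the ingredients of the original argument. Your identification of the delicate point is also correct: the extension of $V$ across $p_j$ with a zero there is precisely where $2\pi\alpha_j<\pi$ enters, and without it the conclusion genuinely fails (as the later HCMU literature, including the present paper, demonstrates by constructing non-rotationally-symmetric examples with larger angles). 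One small caveat: in your genus-one case you should also note that even if $n=0$ the field $V=\nabla^{1,0}K$ must vanish at any smooth critical point of $K$, which exists since $K$ is continuous on a compact surface; you mention this parenthetically, and it is needed to rule out the nowhere-vanishing possibility on a torus.
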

Now, we commonly refer to an extremal K\"{a}hler metric with finitely many singularities on a
compact Riemann surface as an HCMU (Hessian of the Curvature of the Metric is Umbilical) metric.  In \cite{Ch00}, X.X. Chen presented a theorem, famously known as the obstruction theorem for non-CSC HCMU metrics with conical singularities.

\begin{theorem}[\cite{Ch00}]\label{Obstruction}
Let $g$ be a non-CSC HCMU metric in a K-surface $\mathcal{M}_{\{\alpha_{1},\ldots,\alpha_{N}\}}$. Then the Euler character of the underlying surface should be determined by
$$\chi(\mathcal{M})=\sum_{j=1}^{J}(1-\alpha_{j})+(N-J)+S$$
where $2\pi\alpha_{1},\ldots,2\pi\alpha_{J}$ are all the singular angles corresponding to the saddle points of the Gaussian curvature $K$ of $g$, and $S$ is the number of smooth critical points of $K$.
\end{theorem}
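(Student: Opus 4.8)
The plan is to run a global index (Poincaré–Hopf/Gauss–Bonnet) count for the gradient of the Gaussian curvature, using the holomorphicity that the extremal equation forces on it.

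\emph{Step 1: holomorphicity and the smooth critical points.} In a local conformal coordinate $z$ with $g=e^{2\varphi}|dz|^{2}$, the extremal equation $R_{,zz}=0$ becomes $\partial_{z}(e^{-2\varphi}\partial_{z}R)=0$, so $e^{-2\varphi}R_{z}$ is anti-holomorphic and the complex gradient $X:=\operatorname{grad}_{g}^{1,0}K=e^{-2\varphi}K_{\bar z}\,\partial_{z}$ takes the form $X=\psi(z)\,\partial_{z}$ with $\psi$ holomorphic on $\mathcal M^{*}:=\mathcal M\setminus\{p_{1},\dots,p_{N}\}$; equivalently $\nabla K$ is a real-analytic vector field on $\mathcal M^{*}$ whose zero set is exactly the set of smooth critical points of $K$. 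Since $g$ is HCMU, $\operatorname{Hess}K=\tfrac12(\Delta K)g$, so at a smooth critical point with $\Delta K\neq0$ the Hessian is definite: the point is a nondegenerate local maximum or minimum and $\nabla K$ has index $+1$ there. One must also rule out smooth critical points with $\Delta K=0$ and any accumulation of critical points; this follows from the holomorphicity of $X$ (away from zeros of $\psi$ the flow of $X$ straightens to $\partial_{\zeta}$ and $K$ is strictly monotone along it, which in addition shows that the regular level sets of $K$ foliate $\mathcal M^{*}$ tamely) together with meromorphy of $\psi$ near each $p_{j}$ coming from finite energy. Denote the smooth critical points by $q_{1},\dots,q_{S}$.

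\emph{Step 2: local classification at the conical points.} By the analysis of non-CSC HCMU metrics near a conical singularity (\cite{Ch99,WZ,Ch00}), $K$ extends continuously to each $p_{j}$ of angle $2\pi\alpha_{j}$, and $p_{j}$ is of exactly one of two types: an \emph{extremal point} of $K$, where $p_{j}$ is a local extremum in the sense of the level-set foliation and $\nabla K$ is asymptotically radial at $p_{j}$; or a \emph{saddle point} of $K$, where the level set $\{K=K(p_{j})\}$ has at least three branches issuing from $p_{j}$. Reorder so that $p_{1},\dots,p_{J}$ are the saddle points and $p_{J+1},\dots,p_{N}$ the extremal points; this is the $J$ of the statement.

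\emph{Step 3: the index count.} Excise small geodesic disks $D_{j}$ around the $p_{j}$ and apply the Poincaré–Hopf/Gauss–Bonnet index formula for $\nabla K$ on the compact surface with boundary $\mathcal M\setminus\bigcup_{j}D_{j}$ equipped with $g$. The interior zeros contribute $\sum_{i=1}^{S}\operatorname{ind}_{q_{i}}(\nabla K)=S$. At an extremal point $p_{j}$ the field $\nabla K$ is radial, so its boundary circle contributes $+1$ (like a capping disk), giving $N-J$ in total from $p_{J+1},\dots,p_{N}$. At a saddle point $p_{j}$ the boundary contribution is the total turning of $\nabla K$ around $p_{j}$ measured intrinsically in the conical metric; because the cone at $p_{j}$ has total angle $2\pi\alpha_{j}$ instead of $2\pi$, this turning number equals $1-\alpha_{j}$, the appearance of the cone angle rather than a pure integer reflecting the angle defect $2\pi(1-\alpha_{j})$ at $p_{j}$. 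Adding the contributions and using $\chi(\mathcal M\setminus\bigcup_{j}D_{j})=\chi(\mathcal M)-N$ yields
$$\chi(\mathcal M)=\sum_{j=1}^{J}(1-\alpha_{j})+(N-J)+S.$$

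The main obstacle is Step 3 at the saddle conical singularities: one needs the precise asymptotic normal form of the HCMU metric and of $K$ near such a point — the substance of Chen's local study — and one must bookkeep the boundary/defect term so that the cone angle enters as the fractional quantity $1-\alpha_{j}$ while every smooth-point contribution stays the integer $+1$. A secondary difficulty, also required to make the excision-and-reassembly argument rigorous, is establishing enough tameness of the level-set foliation of $K$ (finitely many singular leaves, circular regular leaves), which again rests on the holomorphicity of $\operatorname{grad}_{g}^{1,0}K$ together with the HCMU equation.
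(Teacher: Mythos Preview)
The paper does not supply its own proof of this theorem; it is quoted as background from \cite{Ch00}. That said, the facts about the character $1$-form $\omega$ collected in Section~2.1 (drawn from \cite{LZ,CW2,Xb}) make the identity a one-line deduction: $\omega$ is meromorphic on $\mathcal M$, its zeros are exactly the saddle singularities $p_1,\dots,p_J$ with respective orders $\alpha_j-1$, and its poles are all simple and sit at the remaining $N-J$ singularities together with the $S$ smooth critical points of $K$. Since the divisor of any meromorphic $1$-form on $\mathcal M$ has degree $-\chi(\mathcal M)$,
\[
\sum_{j=1}^{J}(\alpha_j-1)\;-\;\bigl[(N-J)+S\bigr]\;=\;-\chi(\mathcal M),
\]
which rearranges to the stated formula.

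Your Poincar\'e--Hopf outline is the dual of this computation, run for the vector field $\nabla K$ instead of the form $\omega$, and the arithmetic lands correctly. The soft spot is the justification of the saddle contribution $1-\alpha_j$ via ``intrinsic turning in the conical metric'': this heuristic gives the right number, but it is exactly the delicate local step you flag as the main obstacle, and as written it is not a proof. The cleaner route is already implicit in your Step~1: $\nabla K$ extends to a \emph{meromorphic} vector field on all of $\mathcal M$ (the Lin--Zhu result recalled in Section~2.1), with a pole of order $\alpha_j-1$ at each saddle $p_j$ and a simple zero at every extremal point, singular or smooth. The Poincar\'e--Hopf index at a pole of order $m$ is $-m$, so the saddle indices are $1-\alpha_j$ on the nose, and the excision, boundary-turning, and cone-angle bookkeeping all become unnecessary. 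In short, your strategy is sound, but once you invoke the meromorphic extension you already mention, the part you identify as hard evaporates.
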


 According to \textbf{Theorem} \ref{Obstruction}, if the curvature  $K$ of a non-CSC HCMU metric has no saddle points, the underlying surface must be a sphere, and $K$ possesses exactly two extremal points. Such an HCMU metric exhibits rotational symmetric, which is called a football. Furthermore, Chen's classification in \cite{Ch00} details the properties of football metrics.
\begin{theorem}[\cite{Ch00}]\label{Chen00}
If none of the prescribed angles in a K-surface is an integer multiple of $2\pi$, then this K-surface supports a non-CSC HCMU metric if and only if it is a football with two different angles at the two poles. Furthermore, any non-CSC HCMU metric in these footballs must be rotationally symmetric and uniquely determined by the area.
\end{theorem}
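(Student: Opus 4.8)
The plan is to extract from the extremal equation $K_{,\alpha\beta}=0$ a circle of isometries, use it to pin the underlying surface down to $S^{2}$ with all of the data concentrated at two poles, and then to solve the resulting ordinary differential equation. The first step is to turn extremality into symmetry: on a surface $K_{,\alpha\beta}=0$ is equivalent to the vanishing of the trace‑free part $H^{0}$ of $\mathrm{Hess}\,K$, so $\mathrm{Hess}\,K=\tfrac12(\Delta_{g}K)\,g$, and using $\nabla J=0$ one computes $\mathcal{L}_{X}g=-2\,H^{0}(\,\cdot\,,J\,\cdot\,)$ for $X:=J\nabla_{g}K$; hence $H^{0}=0$ forces $X$ to be Killing (the surface form of Calabi's fact that the extremal vector field is holomorphic and isometric), and $X\not\equiv 0$ because $g$ is non‑CSC. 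A compact Riemann surface of genus $\ge 2$ has finite isometry group, and on a torus $K$ is invariant under the closure of the flow of $X$ — a positive‑dimensional subtorus — hence, by $K_{,\alpha\beta}=0$, affine in the periodic orbit coordinate and therefore constant, contradicting non‑CSC; so the underlying surface is $S^{2}$, the flow of $X$ is periodic, and we have a nontrivial isometric $S^{1}$‑action leaving $K$ invariant. It is also useful to record, by feeding $K_{,\alpha\beta}=0$ into the Bochner identity, the first integrals $|\nabla_{g}K|_{g}^{2}=-\tfrac13K^{3}+aK+b$ and $\Delta_{g}K=-K^{2}+a$, which show that every smooth critical point of $K$ is a nondegenerate local extremum, never a saddle.

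The second step is to locate the singularities and reduce to an ODE. The flow of $X$ permutes the finite set of conical points and, being connected, fixes each of them (a $1$‑dimensional orbit would produce a circle of conical points); here the hypothesis $\alpha_{j}\notin\mathbb{Z}$ is used to guarantee that the $p_{j}$ are genuine metric singularities and so are detected by $X$. Thus the conical points lie among the two fixed points of the rotation $S^{1}$‑action on $S^{2}$, i.e. at the two poles. Writing the rotationally symmetric metric as $g=\phi^{-1}\,dx^{2}+\phi\,d\theta^{2}$ on $[x_{-},x_{+}]$ ($\theta$ of period $2\pi$), with $\phi>0$ in the interior, $\phi(x_{\pm})=0$, and cone angle $\pi|\phi'(x_{\pm})|$ at the pole $x_{\pm}$ (the value $2\pi$ meaning a smooth pole), one has $K=-\tfrac12\phi''$ and area $=2\pi(x_{+}-x_{-})$, and the HCMU equation reduces to $\phi^{(4)}=0$: $\phi$ is a cubic polynomial, genuinely cubic exactly when $g$ is non‑CSC. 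In that case $K$ is strictly monotone on $[x_{-},x_{+}]$, so its only critical points are the two poles, and these are the only possible singular points; hence $\mathcal{M}$ is a football. Consistently, Theorem~\ref{Obstruction} with $\chi(S^{2})=2$ forces the saddle term to vanish and $N+S=2$.

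The third step is to read the dichotomy off the cubic. If $\phi>0$ on $(x_{-},x_{+})$ and $\phi$ is genuinely cubic, write $\phi(x)=c\,(x-x_{-})(x_{+}-x)(x-x_{0})$ with $c\neq 0$ and $x_{0}\notin(x_{-},x_{+})$ (both forced by positivity); then $|\phi'(x_{-})|/|\phi'(x_{+})|=|x_{-}-x_{0}|/|x_{+}-x_{0}|\neq 1$, so the two pole angles are necessarily distinct, whereas equal prescribed angles collapse $\phi$ to a quadratic and make $K$ constant — this gives the \emph{only if} direction and the dichotomy. For the \emph{if} direction and the last assertion: given distinct angles $2\pi\alpha\neq 2\pi\beta$ and area $A$, normalize $x_{-}=0$, $x_{+}=\ell:=A/2\pi$ by a translation of $x$; then a cubic vanishing at $x_{\pm}$ with $\phi'(0)=2\alpha$, $\phi'(\ell)=-2\beta$ exists and is unique — explicitly $\phi(x)=-\tfrac{2(\alpha-\beta)}{\ell^{2}}\,x(\ell-x)\bigl(x-\tfrac{\alpha\ell}{\alpha-\beta}\bigr)$ — it is positive on $(0,\ell)$ and genuinely cubic precisely because $\alpha\neq\beta$, and it defines the desired non‑CSC HCMU metric; uniqueness of this cubic together with the rotational symmetry already forced above yields uniqueness of the metric.

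The main obstacle is the second step: promoting the infinitesimal symmetry $X$ to an honest $S^{1}$‑action, proving — via $\alpha_{j}\notin\mathbb{Z}$ — that every singular point must be among its two fixed points, and ruling out the torus. This is what collapses every a priori configuration on every surface to a single rotationally symmetric football; once that reduction is secured, the passage to the cubic $\phi$ and the resulting classification are elementary.
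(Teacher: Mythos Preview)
This theorem is quoted from \cite{Ch00} and the present paper does not supply a proof; it is used as background. Your argument is essentially a faithful sketch of Chen's original geometric proof: produce the Killing field $X=J\nabla_{g}K$, promote it to an isometric $S^{1}$-action, force the surface to be $S^{2}$ with the singularities at the two fixed points, and finish by the cubic ODE for the profile $\phi$. So your approach matches the cited source rather than anything in this paper.

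It is worth noting that the machinery the paper \emph{does} develop (\S2.1--2.2) yields a different and in some ways cleaner route to the same statement. Since every zero of the character $1$-form $\omega$ is a conical singularity with angle $2\pi\cdot\text{integer}$, the hypothesis $\alpha_{j}\notin\mathbb{Z}$ forces $\omega$ to be zero-free; then Theorem~\ref{Obstruction} with $J=0$ reads $\chi(\mathcal{M})=N+S\ge 1$, so $\mathcal{M}=S^{2}$ and $N+S=2$. The residue relation $\sum\mathrm{Res}\,\omega=0$ together with $\lambda=-(2K_{1}+K_{2})/(2K_{2}+K_{1})<-1$ then forces the two pole angles to be distinct, and Theorems~\ref{from 1-form to HCMU} and~\ref{WW16} give existence and uniqueness given the area. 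This $1$-form argument bypasses the analytic work of integrating $X$ to an honest $S^{1}$-action across the singularities; your approach, by contrast, produces the rotational symmetry directly and makes the explicit cubic profile transparent.

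One point in your write-up deserves tightening: you invoke $\alpha_{j}\notin\mathbb{Z}$ to ensure the $p_{j}$ are ``genuine metric singularities and so are detected by $X$'', but a cone point of angle $2\pi m$, $m\ge 2$, is equally a metric singularity. The real role of the non-integrality is upstream: it is precisely what rules out saddle points of $K$ (zeros of $\omega$), and without saddles the meromorphic vector field $\nabla K$ extends holomorphically over every singularity, which is what makes the flow of $X$ complete and the global $S^{1}$-action available in the first place. As written, your logical order places the hypothesis after the $S^{1}$-action has already been asserted; it should come before.
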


In \cite{LZ}, C.S. Lin and X.H. Zhu introduced a class of non-CSC HCMU metrics on $S^{2}$ with finitely many conical singularities of angles $2\pi\cdot integers$. These special non-CSC HCMU metrics are called exceptional when all singularities are saddle points of the Gaussian curvature $K$. A minimal exceptional HCMU metric is one where $K$ has a single  minimum. They provided an explicit formula for such metrics, showing they are determined by 3 parameters. In \cite{CW1}, Q. Chen and Y.Y. Wu derived an explicit formula for non-CSC HCMU metrics on $S^{2}$ and $T^{2}$, generalizing the results from \cite{LZ}.  In \cite{CCW}, Q. Chen, X.X. Chen and Y.Y. Wu proved that non-CSC HCMU metrics are locally isometric to a football, and reduced the existence question to a combinatorial problem. For $\mathcal{M}=S^{2}$, they showed  that  Chen's obstruction theorem is sufficient for the existence of  non-CSC HCMU metrics. In \cite{CW2,Xb}, the authors  further reduced the existence problem to a meromorphic 1-form on the underlying Riemann surface. However, the existence of  such a form remains challenging. On $S^2$, this problem can be translated into an algebraic one, which led to a class of non-CSC HCMU metrics with conical singularities whose existence is independent of singularity positions, as demonstrated in \cite{Wei}. In \cite{Wei19}, Y.Y. Wu and the second author  classified normalized non-CSC HCMU metrics on the K-surface $S^{2}_{\{2,2,2\}}$. Normalization in this context means that the Gaussian curvature $K$ of the metric is constrained such that its maximum is equal to $1$. \par

As far as we know, the classification of non-CSC HCMU metrics on $S^{2}$ with conical singularities \textemdash even those with a small number \textemdash remains unexplored. Therefore, this paper focuses on the classification problem for non-CSC HCMU metrics with finite area and energy on K-surfaces $S^2_{\{\alpha\}}$ and $S^2_{\{\alpha,\beta\}}$. Our results are as follows.
\begin{theorem}\label{mainth1}
  Suppose $g$ is a non-CSC HCMU metric on $S^{2}_{\{\alpha\}}$ and $K$ is the Gaussian curvature of $g$, then the following classification holds.\\
 (1) If the singularity is an extremal points of $K$, then  $g$ is a rotationally symmetric metric that is uniquely determined  by the total area and the angle $2\pi\alpha$. Explicitly, if $0\leq\alpha<1$, the singularity is the minimum point of $K$; if $\alpha>1$, the singularity is the maximum point of $K$.\\
 (2) If the  singularity is the saddle point of $K$, then $2\leq\alpha\in \mathbb{Z}$ and $K$ has $I_{1}\geq1$ maximum points and $I_{2}\geq1$ minimum points, where $I_{1}$ and $I_{2}$ satisfy $I_{1}>I_{2},I_{1}+I_{2}=\alpha+1$ and one of the following conditions holds.\par
 (2-1) $I_{2}=1$.\par
 (2-2) $I_{2}\geq2$ and $I_{2}\nmid I_{1}$.
\end{theorem}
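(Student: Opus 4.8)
\emph{Proof strategy.} The plan is to combine the local structure theory of non-CSC HCMU metrics with Chen's obstruction theorem (Theorem \ref{Obstruction}) and a combinatorial analysis of how the metric is assembled around the conical point. I will use the following structural facts, all from \cite{LZ,CW1,CCW,CW2,Xb,Wei}: for a non-CSC HCMU metric $g$ with Gaussian curvature $K$, (a) away from its critical set $g$ is locally isometric to a football, $K$ takes values in a compact interval $[K_{\min},K_{\max}]$, and $|\nabla K|^{2}=P(K)$ for a cubic $P$ vanishing at the extremal values (on each flow region); (b) every smooth critical point of $K$ is a nondegenerate extremum, never a saddle; (c) a saddle point of $K$ can occur only at a conical singularity, and in a uniformizing holomorphic coordinate $w$ centred there one has $K=K_{\mathrm{sad}}+\mathrm{Re}(c\,w^{q})+O(|w|^{q+1})$ with $c\neq 0$ and $q$ the integer with cone angle $2\pi q$, so the critical level $\{K=K_{\mathrm{sad}}\}$ consists near the point of $2q$ analytic arcs carrying $q$ ascending and $q$ descending separatrix directions in cyclic alternation; and (d) $g$ is encoded by a meromorphic $1$-form $\omega$ on the underlying Riemann surface ($=\mathbb{CP}^{1}$ here, with the marked point), having simple poles of real residue at the extrema of $K$ (positive at maxima, negative at minima, all of common modulus fixed by $P$) and a zero of order $q-1$ at each conical saddle, with $\sum\mathrm{Res}\,\omega=0$.

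\emph{Part (1).} Suppose the conical point is an extremal point of $K$. By (b) it is the only possible saddle location and it is not a saddle, so $K$ has no saddle points; in Theorem \ref{Obstruction} this means $J=0$, $N=1$, and $S$ equals the number of smooth extrema, so $2=(1-0)+0+S$, i.e. $S=1$. Hence $K$ has exactly two critical points — the conical point and one smooth point, one a maximum and one a minimum — so $(S^{2}_{\{\alpha\}},g)$ is a football, and Theorem \ref{W-Z}(iv) (equivalently Theorem \ref{Chen00}) gives that $g$ is rotationally symmetric and uniquely determined by the area and the angle $2\pi\alpha$. To identify which pole carries the conical point, write $P(K)=-(K-K_{\min})(K-K_{\max})(K-K_{3})$; positivity of $|\nabla K|^{2}$ on $(K_{\min},K_{\max})$ forces $K_{3}\le K_{\min}$, and a short computation in the surface-of-revolution form shows the cone angle at the pole where $K=K_{\ast}$ equals a fixed constant times $|P'(K_{\ast})|$. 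Since $|P'(K_{\max})|=(K_{\max}-K_{\min})(K_{\max}-K_{3})>(K_{\max}-K_{\min})(K_{\min}-K_{3})=|P'(K_{\min})|$, the larger cone angle sits at the maximum; as the other (smooth) pole has angle $2\pi$, the conical point is the maximum when $\alpha>1$ and the minimum when $\alpha<1$. Finally $\alpha=1$ is vacuous, since then $S^{2}_{\{\alpha\}}$ is smooth and Calabi's theorem rules out a non-CSC HCMU metric.

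\emph{Part (2).} Suppose the conical point is a saddle of $K$. By (c) the cone angle $2\pi\alpha$ has $\alpha\in\mathbb{Z}$, and Theorem \ref{Obstruction} with $J=1$, $N=1$ and $S=I_{1}+I_{2}$ (all extrema smooth by (b)) gives
\[
2=(1-\alpha)+0+(I_{1}+I_{2}),\qquad\text{i.e.}\qquad I_{1}+I_{2}=\alpha+1,
\]
with $\alpha\ge 2$ because a genuine saddle has index $1-\alpha\le-1$, and $I_{1},I_{2}\ge 1$ because $K$ is non-constant (each ascending, resp. descending, separatrix of (c) flows to a maximum, resp. minimum). For the remaining assertions, note that with a single saddle the Reeb graph of $K$ is the star with centre the saddle, $I_{1}$ edges to maxima and $I_{2}$ edges to minima; correspondingly $S^{2}_{\{\alpha\}}$ is built by arranging around the conical point the $2\alpha$ sectors cut out by the separatrices of (c), alternately ascending and descending, each ascending one lying in one of $I_{1}$ maximal half-football caps and each descending one in one of $I_{2}$ minimal caps, which records a cyclic word of length $2\alpha$ using all $I_{1}$ maximal and all $I_{2}$ minimal labels. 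This word must satisfy planarity (the assembled surface is $S^{2}$) and — the delicate condition — the consistency of the Killing field $J\nabla K$ (equivalently the reality and consistency of the periods of $\omega$) around the conical point. Using that all residues of $\omega$ have a common modulus, the period condition forces all $2\alpha$ sectors to subtend equal angular length; decoding the resulting constraint on the cyclic word yields exactly $I_{1}>I_{2}$ together with the alternative $I_{2}=1$, or $I_{2}\ge 2$ and $I_{2}\nmid I_{1}$ (equivalently $I_{2}\nmid(\alpha+1)$) — in the excluded case $I_{2}\mid I_{1}$, $I_{2}\ge 2$, the only admissible words are the fully rotationally symmetric ones, for which the conical point degenerates to a pole and is not a genuine saddle, a contradiction.

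\emph{Main obstacle.} The crux is Part (2): passing from the analytic input — the local normal form (c) and the residue/period conditions on $\omega$ in (d) — to the clean numerical conclusions $I_{1}>I_{2}$ and the divisibility alternative. Proving the inequality $I_{1}>I_{2}$, and above all that $I_{2}\mid I_{1}$ with $I_{2}\ge 2$ is obstructed, requires a careful monodromy computation for the Killing field around the conical saddle (equivalently a precise description of the period lattice of $\omega$ and of which cyclic gluing words actually close up to a metric), together with an explicit construction of an admissible configuration whenever the stated conditions hold, so as to see that nothing further is excluded. Everything else — Part (1), the identity $I_{1}+I_{2}=\alpha+1$, and the integrality $\alpha\in\mathbb{Z}_{\ge 2}$ — follows directly from Theorems \ref{Obstruction}, \ref{W-Z} and \ref{Chen00} together with the cited structure theory.
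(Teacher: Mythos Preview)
Your Part~(1) is essentially correct and parallels the paper, though the paper identifies which pole is conical more directly from the residue of the character $1$-form $\omega$ rather than by comparing $|P'(K_\ast)|$.

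Part~(2), however, has real gaps. First, a factual slip: the residues of $\omega$ do \emph{not} all have a common modulus. At a smooth maximum the residue is $\sigma$ and at a smooth minimum it is $\sigma\lambda$ with $|\lambda|>1$, so your sentence ``all residues of $\omega$ have a common modulus, the period condition forces all $2\alpha$ sectors to subtend equal angular length'' rests on a false premise. In fact the inequality $I_1>I_2$ drops out immediately from the residue theorem: $\sum\mathrm{Res}\,\omega=0$ gives $I_1\sigma+I_2\sigma\lambda=0$, hence $\lambda=-I_1/I_2$, and $|\lambda|>1$ yields $I_1>I_2$. You never isolate this relation, so your route to $I_1>I_2$ is not actually visible.

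Second, and more seriously, the divisibility obstruction (that $I_2\ge 2$ with $I_2\mid I_1$ is impossible) is only gestured at. Your cyclic-word/monodromy sketch gives no concrete mechanism ruling out $I_2\mid I_1$, and your ``Main obstacle'' paragraph concedes this. The paper's argument is short and decisive: given such an $\omega$, set $f=\exp\bigl(\int\omega\bigr)$; when $I_2\mid I_1$ this is a genuine rational function on $\mathbb{CP}^1$ of degree $I_1$, and computing $f'$ shows its only finite critical point is the zero of $\omega$, of multiplicity $\alpha-1$, forcing $\alpha\le I_1$, i.e.\ $I_2\le 1$---a contradiction. Conversely, existence of $\omega$ under (2-1)/(2-2) is obtained by realizing an appropriate branch datum for a rational map via Boccara's theorem (Theorem~\ref{M-F-T-3}) and taking $df/f$. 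Without a substitute for these two steps---the exponential trick for necessity and a branched-cover construction for sufficiency---your Part~(2) remains incomplete.
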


\begin{theorem}\label{mainth2}
  Suppose $g$ is a non-CSC HCMU metric on $S^{2}_{\{\alpha,\beta\}}$ and $K$ is the Gaussian curvature of $g$, then the following classification holds.\\
 (1) If two singularities are extremal points of $K$, then $g$ is a rotationally symmetric metric that is uniquely determined  by the total area and the angles $2\pi\alpha,2\pi\beta$, i.e., g is a football.\\
 (2) If one  singularity is the saddle point of $K$ and  the other is an extremal point of $K$, without loss of generality, suppose the singularity of conical angle $2\pi\alpha ~(2\leq\alpha\in \mathbb{Z})$ is the saddle point of $K$, then the following statements hold.\par
 (A) If $2\leq\beta\in\mathbb{Z}$, there are two cases.\par
 (A-1) If the singularity of singular angle $2\pi\beta$ is a maximum point of $K$, then $K$ has $I_{1}+1\geq1$ maximum points and $I_{2}\geq1$ minimum points, where $I_{1}$ and $I_{2}$ satisfy $I_{1}+I_{2}=\alpha$ and one of the following conditions holds.\par
 ~~(a-1-1) $I_{2}=1$.\par
 ~~(a-1-2) $I_{2}\geq2,I_{2}\mid (I_{1}+\beta)$ and $I_{1}+\beta>\alpha-1$.\par
 ~~(a-1-3) $I_{2}\geq2$ and $I_{2}\nmid (I_{1}+\beta)$.\par
 (A-2) If the singularity of singular angle $2\pi\beta$ is a minimum point of $K$, then $K$ has $I_{1}\geq1$ maximum points and $I_{2}+1\geq1$ minimum points,  where $I_{1}$ and $I_{2}$ satisfy $I_{1}+I_{2}=\alpha$ and one of the following conditions holds.\par
 ~~(a-2-1) $I_{2}=0$.\par
 ~~(a-2-2) $I_{2}\geq1,(I_{2}+\beta)\mid I_{1}$ and $I_{1}(I_{2}+\beta)\geq (\alpha-1) {\rm GCD}(I_{2}+\beta,I_{1})$.\par
  (B) If $\beta\notin\mathbb{Z}$, there are two cases.\par
   (B-1) If the singularity of singular angle $2\pi\beta$ is a maximum point of $K$, then $K$ has $I_{1}+1\geq1$ maximum points and $I_{2}\geq1$ minimum points,  where $I_{1}$ and $I_{2}$ satisfy $I_{1}+I_{2}=\alpha$ and $I_{1}+\beta>I_{2}$.\par
   (B-2) If the singularity of singular angle $2\pi\beta$ is a minimum point of $K$, then $K$ has $I_{1}\geq1$ maximum points and $I_{2}+1\geq1$ minimum points,  where $I_{1}$ and $I_{2}$ satisfy $I_{1}+I_{2}=\alpha$ and $I_{1}>I_{2}+\beta$.\\
 (3) If both singularities are saddle points of $K$, then $2\leq\alpha,\beta\in\mathbb{Z}$ and $K$ has $I_{1}\geq1$ maximum points and $I_{2}\geq1$ minimum points, where $I_{1}$ and $I_{2}$ satisfy $I_{1}>I_{2},I_{1}+I_{2}=\alpha+\beta$ and one of the following conditions holds.\par
 ~~(3-1) $I_{2}=1$.\par
 ~~(3-2) $I_{2}\geq2,I_{2}\mid I_{1}$ and $I_{1}\geq max\{\alpha,\beta\}$.\par
 ~~(3-3) $I_{2}\geq2$ and $I_{2}\nmid I_{1}$.
\end{theorem}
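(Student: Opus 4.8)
The plan is to combine Chen's obstruction theorem with the structure theory of non-CSC HCMU metrics and with the reduction---due to Chen--Wu \cite{CW2} and Xu \cite{Xb}, and made explicit/algebraic on $S^2$ in \cite{Wei,CW1,LZ}---of the existence problem on $\mathbb{P}^1$ to a question about meromorphic $1$-forms; the arithmetic conditions in the statement should then emerge as the precise solvability criteria. Throughout, the only candidates for singular critical points of $K$ are the two cone points $p_\alpha,p_\beta$ of angles $2\pi\alpha,2\pi\beta$. First I would fix the discrete invariants: applying Theorem \ref{Obstruction} to $\mathcal{M}=S^2$ ($\chi=2$, $N=2$) and splitting on the number $J\in\{0,1,2\}$ of cone points that are saddle points of $K$ gives $S=0$ in case (1), $S=\alpha$ (so $\alpha\in\mathbb{Z}_{\geq 0}$, with $p_\alpha$ the saddle) in case (2), and $S=\alpha+\beta$ (so $\alpha+\beta\in\mathbb{Z}_{\geq 0}$) in case (3). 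Next I would record two structural facts: (a) $K$ has no smooth saddle point, for $J\nabla K$ is a Killing field for $g$ (\cite{CCW}), so its linearization at a zero is skew-symmetric, whereas at a hypothetical smooth saddle it would be $J$ composed with the Hessian of $K$, which has two real nonzero eigenvalues of opposite sign---impossible; and (b) a cone point of angle $2\pi\gamma$ can be a saddle of $K$ only if $\gamma$ is an integer $\geq 2$. Combined with the $S$-counts these force $2\leq\alpha\in\mathbb{Z}$ in cases (2) and (3) and $2\leq\beta\in\mathbb{Z}$ in case (3), and show every critical point of $K$ is a local maximum, a local minimum, or one of at most two cone saddles; writing $I_1,I_2$ for the numbers of smooth maxima and minima and adjoining whichever of $p_\alpha,p_\beta$ is an extremum yields the counting identities of cases (2) and (3).

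Next I would pass to the $1$-form description adapted to $\mathbb{P}^1$: a non-CSC HCMU metric with the above critical-point configuration is equivalent to a meromorphic $1$-form $\omega$ on $\mathbb{P}^1$ with simple poles exactly at the maxima of $K$ (positive residue) and the minima (negative residue), a zero of prescribed order (its angle index minus one) at each cone saddle, the residue at an extremal point of cone angle $2\pi m$ an integer multiple of a fixed unit determined by $m$, and $\sum\operatorname{Res}\omega=0$. Since $g$ is non-CSC, $K_{\max}\neq K_{\min}$, and the gradient flow of $K$ distinguishes the maxima from the minima; together with the orders of the zeros at the saddles, the residue identity can be satisfied only when the maxima strictly outnumber the minima, which is the source of $I_1>I_2$ in case (3) and of the weighted inequalities $I_1+\beta>I_2$ and $I_1>I_2+\beta$ in (B-1) and (B-2)---there the irrationality of $\beta$ prevents its residue from cancelling against the others, so it is absorbed entirely into the inequality and no further arithmetic appears. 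When the relevant angle is an integer, clearing denominators in $\sum\operatorname{Res}\omega=0$ turns the residue identity into exactly the divisibility relations ($I_2\mid I_1$, $I_2\mid(I_1+\beta)$, $(I_2+\beta)\mid I_1$, and so on) and the GCD inequality of (A-2).

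It then remains to run the cases. In case (1), $S=0$ leaves only $p_\alpha,p_\beta$ as critical points---one a maximum and one a minimum---with every gradient trajectory of $K$ running directly between them; since $g$ is locally a football near each and there are no further critical points, $g$ is globally rotationally symmetric, the two angles are distinct (else $K$ would be constant), and integrating the meridian ODE shows $g$ is determined by the area, so $g$ is a football as in Theorem \ref{Chen00}. In cases (2) and (3) the ``only if'' direction is the assertion that an existing metric forces the divisibility; I would extract it from the order-and-residue constraints on $\omega$---for instance, when $I_2\mid I_1$ the configuration carries a $\mathbb{Z}/I_2$-symmetry that obstructs inserting cone saddles of angles $2\pi\alpha,2\pi\beta$ unless $I_1\geq\max\{\alpha,\beta\}$, and the GCD condition in (A-2) is precisely what lets the residues with denominator $I_2+\beta$ sum to zero.

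The ``if'' direction---realizing \emph{every} admissible tuple $(\alpha,\beta,I_1,I_2)$ by an actual metric---is what I expect to be the main obstacle. Here one must construct $\omega$ explicitly, or equivalently assemble the metric from football pieces glued along gradient trajectories in the spirit of \cite{CCW,CW1,Wei19}, with the arithmetic hypotheses being exactly what makes the gluing close up; the delicate instances are the borderline cases (the divisibility case with $I_1=\max\{\alpha,\beta\}$, and the GCD-critical instance of (A-2)), where there is almost no freedom in where the cone saddles may be placed. By comparison, the skew-symmetry argument ruling out smooth saddles and the separate treatment of integer versus non-integer angles are routine once the framework is in place.
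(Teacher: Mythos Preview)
Your overall architecture---split on $J\in\{0,1,2\}$ via the obstruction theorem, record the counting identities, then pass to the character $1$-form $\omega$---matches the paper. But you misidentify the source of the arithmetic conditions, and the mechanism you propose will not produce them. The residue theorem on $\omega$ yields only the value of the ratio $\lambda$ (e.g.\ $\lambda=-I_1/I_2$ in case (3), or $-(I_1+\beta)/I_2$ in (A-1)); it imposes \emph{no} divisibility whatsoever, since $\lambda$ is a free real parameter of the metric. Likewise the inequality $I_1>I_2$ (and its weighted variants) does not come from gradient-flow counting but from the analytic constraint $\lambda<-1$, which is forced by $K_1>K_2>-(K_1+K_2)$. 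What is genuinely at stake is whether a $1$-form on $\mathbb{P}^1$ with those prescribed simple-pole residues can \emph{simultaneously} have a zero of order $\alpha-1$ at one point (and, in case (3), another of order $\beta-1$). Your ``clearing denominators in $\sum\operatorname{Res}\omega=0$'' and ``$\mathbb{Z}/I_2$-symmetry'' heuristics do not touch this zero-placement problem.

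The paper resolves it by a device absent from your proposal: scale $\omega$ by the constant $c$ that makes all residues integers, and set $f=\exp\bigl(\int c\,\omega\bigr)$, a rational function on $\mathbb{P}^1$. The zeros of $\omega$ become the critical points of $f$ away from $f^{-1}(\{0,\infty\})$, so the question becomes whether a rational map with the resulting branch data exists; this is answered by the Boccara and Song--Xu realizability theorems for branched covers of $\mathbb{P}^1$. For example, in case (3) with $I_2\mid I_1$ one gets $\deg f=I_1$, and since $f$ has local degree $\alpha$ at $z=0$ and $\beta$ at $z=1$, one needs $\alpha,\beta\le I_1$; conversely Boccara's theorem constructs such an $f$ whenever $I_1\ge\max\{\alpha,\beta\}$. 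The GCD inequality in (A-2) and the conditions in (A-1) arise the same way after scaling by $(I_2+\beta)/m$ or $I_2/m$ with $m$ the relevant GCD. Your proposed football-gluing for the ``if'' direction is in the spirit of \cite{CCW}, but without the branched-cover translation you have no mechanism that makes the stated divisibility and GCD conditions visibly necessary \emph{and} sufficient.
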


\section{Preliminaries}

\subsection{HCMU metrics}

\begin{definition}[\cite{WZ}]
 (1) Let $g=e^{2\psi}|dz|^{2}$ be an extremal metric on punctured disk $\mathbb{D}\setminus\{0\}$, where $\mathbb{D}=\{z\in \mathbb{C}:|z|<1\}$. The singular point $z=0$ is called a weak conical point with singular angle $2\pi\alpha$ if and only if $\psi$ satisfies
 \begin{equation}\label{Weakconic}
 \lim_{r\rightarrow0}\frac{1}{2\pi}\int_{0}^{2\pi}(r\frac{\partial \psi(r,\theta)}{\partial r}+1-\alpha)d\theta=0.
 \end{equation}
 If $\alpha=0$ in (\ref{Weakconic}), then the singular point $z=0$ is called a weak cusp.\par
 (2) If $\psi$ can be locally expressed as
 \begin{equation*}\label{Conic}
 \psi(z)=(\alpha-1)\ln |z|+\rho(z)
 \end{equation*}
 with $\rho(z)$ a smooth function on $\mathbb{D}$ and $\alpha>0$, then the singular point $z=0$ is called a conical point with singular angle $2\pi\alpha$.\par
 (3) If $\psi$ can be locally expressed as
 \begin{equation*}\label{Cusp}
 \psi(z)=-\ln |z|+\ln\rho(z)
 \end{equation*}
 with $\rho(z)$ a smooth positive function on $\mathbb{D}$, then the singular point $z=0$ is called a cusp point.
\end{definition}
\begin{definition}[\cite{Ch00}]
 Let $\mathcal{M}$ be a compact Riemann surface and $p_1,\cdots,p_N$ be $N$ points on $\mathcal{M}$.
 Denote $\mathcal{M}\backslash \{p_1,\ldots,p_N\}$ by $\mathcal{M}^*$. Let $g$ be a conformal metric on $\mathcal{M}^*$.
 If $g$ satisfies
\begin{equation}\label{HCMUequ}
  K_{,zz}=0,
\end{equation}
where $K$ is the Gaussian curvature of $g$, then we call $g$ an HCMU (Hessian of the Curvature of the Metric is Umbilical) metric on $\mathcal{M}$.
\end{definition}
  In this paper, we restrict our attention to non-CSC HCMU metrics with finite area and
  finite Calabi energy, that is,
\begin{equation}\label{finite}
 \int_{\mathcal{M}^*}dg<+\infty, ~~ \int_{\mathcal{M}^*}
K^2 dg <+\infty.
\end{equation}
\par
From references \cite{Ch98,Ch00,WZ}, we know that each singularity of a non-CSC HCMU metric  is   conical or cusp if it has finite area and finite Calabi energy. Here, we summarize some key results of non-CSC HCMU metrics, which will be used in this paper. First the equation (\ref{HCMUequ}) is equivalent to
\begin{equation}
 \nabla K = \sqrt{-1}e^{-2\varphi}K_{\bar{z}}\frac{\partial}{\partial z}
\end{equation}
which is a holomorphic vector field on $\mathcal{M}^*$. Independently, authors in \cite{Ch00} and \cite{LZ} demonstrated that the curvature
$K$ can be  extended continuously to $\mathcal{M}$ and possesses a finite set of smooth extremal points on
$\mathcal{M}^*$. Further, \cite{CCW} and \cite{Xb} established that each smooth extremum of $K$ is either a global maximum (denoted $K_{1}$) or a global minimum (denoted $K_{2}$). Importantly, when all the singularities of $g$ are conical singularities,
$$
 K_1>0,~K_1>K_2>-(K_1+K_2);
$$
when there exist cusps in the singularities,
$$K_{1}>0,~K_{2}=-\frac{1}{2}K_{1}.$$
In \cite{LZ}, C.S. Lin and X.H. Zhu demonstrated that $\nabla K$ is actually a meromorphic vector field on $\mathcal{M}$.
In \cite{CW2,Xb}, the authors introduced the dual 1-form of $\nabla K$ by $\omega(\nabla K)=\frac{\sqrt{-1}}{4}$.
They call $\omega$ the character 1-form of the metric, which plays a significant role in studying non-CSC HCMU metrics. Now, denote $\mathcal{M}^* \setminus \{\text{smooth extremal points of}~K \}$ by $\mathcal{M}'$. Then on $\mathcal{M}'$
\begin{equation}\label{sys0}
\begin{cases}
 \cfrac{dK}{-\frac{1}{3}(K-K_1)(K-K_2)(K+K_1+K_2)}=\omega+\bar{\omega}, \\
g=-\frac{4}{3}(K-K_1)(K-K_2)(K+K_1+K_2)\omega \bar{\omega}.\\
\end{cases}
\end{equation}
From (\ref{sys0}), some properties of $\omega$ are derived in \cite{CW2} and \cite{Xb}:
\begin{itemize}
 \item All of the zeros of $\omega$  correspond to  conical singularities of $g$. For each zero of $\omega$ the corresponding singular angle is of the form $2\pi\alpha$ where $\alpha$ is an integer and the order of $\omega$ at the zero is $\alpha-1$. The function $K$ can be smoothly extended to these zeros, where $dK$ vanishes. At each zero of $\omega$, the value of $K$ falls within the range $[K_{1},K_{2}]$, hence we refer to these zeros as  saddle points of $K$.
 \item $\omega$ possesses only simple poles, which correspond to smooth extremal points of $K$ and singularities of $g$, excluding the zeros of $\omega$. Specifically, these poles of $\omega$ consist of all of the global maximum points and the minimum points of $K$. The residue of $\omega$ at each of these poles is a real number.
 \item   If all singularities of $g$ are conical singularities, denote the constant $-\frac{3}{(K_{1}-K_{2})(K_{2}+2K_{1})}$ by $\sigma$ and $-\frac{2K_{1}+K_{2}}{2K_{2}+K_{1}}$ by $\lambda$. Then at a maximum point of $K$ the residue of $\omega$ is $\sigma\alpha$ if at this point $g$ has the singular angle $2\pi\alpha$ or the residue of $\omega$ is $\sigma$ if this maximum point of $K$ is the smooth point of $g$. At a minimum point of $K$ the residue of $\omega$ is $\sigma\lambda\alpha$ if at this point $g$ has the singular angle $2\pi\alpha$ or the residue of $\omega$ is $\sigma\lambda$ if this minimum point of $K$ is the smooth point of $g$.
 \item If the singularities of $g$ contain conical and cusp singularities, then each cusp singularity of $g$ is a pole of $\omega$, the residue of $\omega$ at each cusp singularity is positive and each cusp singularity is a minimum point of $K$. Denote $K_{2}$ by $\mu$. Then if $p$ is a conical singularity with singular angle $2\pi\alpha$ and is a pole of $\omega$, the residue of $\omega$ at $p$ is $-\alpha/3\mu^{2}$ and $\lim_{x\rightarrow p}K(x)=-2\mu$; if $e$ is a smooth critical point of $K$, the residue of $\omega$ at $e$ is $-1/3\mu^{2}$ and $\lim_{x\rightarrow e}K(x)=-2\mu$.
  \item $\omega+\bar{\omega}$ is exact on $\mathcal{M}\setminus \{poles~of~\omega\}$.
 \end{itemize}

\subsection{Reduce the existence of non-CSC HCMU metrics to the existence of some kind of meromorphic 1-forms}
In this subsection, we will review some results in \cite{CW2} and \cite{CWWX}. For the convenience of the reader, we will provide detailed proofs of the main theorems. And we will review the energy integral formula for non-CSC HCMU metrics in \cite{WW16}. First, by virtue of a theorem from \cite{S}, we can establish the following theorem.
 \begin{theorem}[\cite{CW2}]\label{existenc of 1-form}
  Let $\mathcal{M}$ be a Riemann surface, $p_1,\ldots,p_L$ be $L(L\geq 2)$ points on $\mathcal{M}$ and $d_1,\ldots,d_L$ be $L$
  nonzero real numbers with $d_1+\ldots+d_L=0$. Then there exists a meromorphic 1-form $\omega$ on $\mathcal{M}$ such that
  \begin{itemize}
   \item[1)]$\omega$ only has $L$ simple poles at $p_1,\ldots,p_L$ with $Res_{p_l}(\omega)=d_l,~l=1,\ldots,L$,
   \item[2)]$\omega+\bar{\omega}$ is exact on $\mathcal{M} \setminus \{p_1,\ldots,p_L\}$.
  \end{itemize}
 \end{theorem}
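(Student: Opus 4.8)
The plan is to deduce this from the cited theorem of Springer (the reference \cite{S} in the paper) together with a standard exactness reduction. First I would recall the classical fact — essentially Behnke--Stein / Springer for open Riemann surfaces and the Riemann--Roch / Mittag-Leffler machinery for compact ones — that on any Riemann surface $\mathcal{M}$, given finitely many points $p_1,\ldots,p_L$ and prescribed principal parts with vanishing sum of residues, there exists a meromorphic 1-form $\eta$ realizing exactly those principal parts. Concretely, I want $\eta$ with simple poles at the $p_l$ and $\mathrm{Res}_{p_l}(\eta)=d_l$; since $\sum_l d_l = 0$, the residue obstruction vanishes and such an $\eta$ exists. This takes care of item (1), except that $\eta$ need not satisfy the exactness condition (2): a priori $\eta+\bar\eta$ is only closed (it is harmonic away from the poles), not exact, because it may have nonzero periods over the cycles of $\mathcal{M}$.

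The second step is to correct the periods. The 1-form $\eta+\bar\eta$ is a real harmonic 1-form on $\mathcal{M}\setminus\{p_1,\ldots,p_L\}$; its periods over small loops around the $p_l$ vanish (the residues $d_l$ are real, so $\oint \eta = 2\pi i d_l$ is purely imaginary and $\oint(\eta+\bar\eta)=2\pi i d_l + \overline{2\pi i d_l} = 0$). Hence all its periods live on $H_1(\mathcal{M};\mathbb{R})$ coming from the genus of $\mathcal{M}$. On a compact Riemann surface of genus $h$ the space of holomorphic 1-forms has complex dimension $h$, and for a holomorphic 1-form $\theta$ the real harmonic 1-form $\theta+\bar\theta$ has real periods that can be prescribed arbitrarily on a symplectic basis (this is the standard fact that the map $\theta \mapsto (\mathrm{Re}\oint_{a_i}\theta, \mathrm{Re}\oint_{b_i}\theta)$ is an $\mathbb{R}$-linear isomorphism $H^0(\mathcal{M},\Omega^1)\to\mathbb{R}^{2h}$, equivalently that $\{\mathrm{Re}\,\theta : \theta \text{ holomorphic}\}$ is exactly the space of harmonic 1-forms). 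Therefore I can choose a holomorphic 1-form $\theta$ on $\mathcal{M}$ such that $\theta+\bar\theta$ has the same periods as $\eta+\bar\eta$ over every cycle of $\mathcal{M}$. Set $\omega := \eta - \theta$. Then $\omega$ is meromorphic with the same simple poles and residues as $\eta$ (since $\theta$ is holomorphic it changes neither), so (1) still holds, and $\omega+\bar\omega = (\eta+\bar\eta)-(\theta+\bar\theta)$ is a closed real 1-form on $\mathcal{M}\setminus\{p_1,\ldots,p_L\}$ with all periods zero — both the residue-loop periods and the genus periods — hence it is exact. This gives (2).

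The main obstacle, and the step that genuinely needs the quoted theorem of \cite{S} rather than a one-line invocation, is the existence of the meromorphic 1-form with prescribed residues in the non-compact case: when $\mathcal{M}$ is an open Riemann surface one must appeal to a theorem guaranteeing meromorphic 1-forms with prescribed principal parts (the Mittag-Leffler problem for 1-forms is always solvable on open Riemann surfaces, with no residue restriction needed), and to the statement that the real parts of holomorphic forms exhaust the harmonic forms so that the period correction can be carried out. For the application in this paper $\mathcal{M}=S^2$, where $h=0$ and there are no periods to kill, so the second step is vacuous and one only needs: a meromorphic 1-form on $\mathbb{P}^1$ with prescribed simple poles and residues summing to zero, which is elementary — take $\omega = \sum_{l=1}^L \dfrac{d_l}{z-p_l}\,dz$ after placing all poles in an affine chart, the condition $\sum d_l=0$ ensuring $\omega$ is also regular at $\infty$, and exactness on $S^2$ minus the poles being automatic since $S^2$ minus finitely many points still has $H^1$ generated by the residue loops whose periods we have arranged to vanish. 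I would present the general argument and then remark that in the case $\mathcal{M}=S^2$ it reduces to this explicit formula.
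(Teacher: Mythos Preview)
The paper does not actually supply a proof of this theorem: it is quoted from \cite{CW2}, with the single remark that it can be established ``by virtue of a theorem from \cite{S}.'' So there is no in-paper argument to compare against beyond that attribution.

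Your proposal is a correct and standard reconstruction of how such a proof goes, and it is consistent with the paper's hint that the key external input is a theorem from Springer's book. The two-step scheme --- first produce a meromorphic $1$-form $\eta$ with the prescribed simple poles and residues (Mittag-Leffler/Riemann--Roch, the residue condition $\sum d_l=0$ being exactly the obstruction on a compact surface), then kill the genus periods of $\eta+\bar\eta$ by subtracting a holomorphic $1$-form $\theta$ --- is the right one. Your observation that the small-loop periods of $\eta+\bar\eta$ already vanish because the $d_l$ are real is the crucial point that makes the period correction purely a matter of adjusting on $H_1(\mathcal{M};\mathbb{R})$, and your use of the isomorphism between holomorphic $1$-forms (over $\mathbb{R}$) and real harmonic $1$-forms handles that. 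The specialization to $\mathcal{M}=S^2$ with the explicit formula $\omega=\sum_l \frac{d_l}{z-p_l}\,dz$ is exactly how the paper uses this theorem in practice (and the paper indeed remarks later that on $S^2$ condition (2) is automatic once (1) holds). One small caution: the theorem as stated does not assume $\mathcal{M}$ compact, and your discussion of the open case (surjectivity of the period map for holomorphic forms) is a bit sketchier; but since the paper only ever applies this on $S^2$, that does not affect anything here.
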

\noindent Subsequently, Q. Chen and Y.Y. Wu \cite{CW2} established the following theorem.

\begin{theorem}[\cite{CW2}]\label{from 1-form to HCMU}
 Let $\mathcal{M}$ be a compact Riemann surface and $\omega$ be a meromorphic 1-form
 on $\mathcal{M}$ satisfying the conditions:
\begin{itemize}
  \item[1)]$\omega$ only has simple poles,
  \item[2)]At each pole the residue of $\omega$ is a real number,
  \item[3)]$\omega+\bar{\omega}$ is exact on $\mathcal{M}\setminus \{\text{poles of}~\omega\}$.
\end{itemize}
Then, there exists a non-CSC HCMU metric with  conical singularities such that $\omega$ is the character 1-form of the metric.
\end{theorem}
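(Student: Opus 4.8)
The plan is to reverse-engineer the system (\ref{sys0}): starting from the given meromorphic $1$-form $\omega$, I will first produce the curvature function $K$ on $\mathcal{M}$ as a solution of the first ODE in (\ref{sys0}), then define the metric by the second equation, and finally verify that the resulting $g$ is a genuine non-CSC HCMU metric with conical singularities and with $\omega$ as its character $1$-form. First I would fix constants $K_1>0$ and $K_2$ with $K_1>K_2>-(K_1+K_2)$ — these are free parameters reflecting the non-uniqueness (the eventual metric will be normalized by area), and I set $\sigma=-\tfrac{3}{(K_1-K_2)(K_2+2K_1)}$, $\lambda=-\tfrac{2K_1+K_2}{2K_2+K_1}$ exactly as in the third bullet after (\ref{sys0}). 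Using condition 3), write $\omega+\bar\omega=dh$ for a (multivalued-free) real harmonic-conjugate-type primitive $h$ on $\mathcal{M}\setminus\{\text{poles}\}$; near each pole $p_\ell$ with residue $d_\ell$, $h$ behaves like $d_\ell\log|z-p_\ell|^2$ plus a smooth term, so $h$ is proper and its level sets foliate $\mathcal{M}'$.

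Next I would define $K$ implicitly by integrating
\[
\frac{dK}{-\tfrac13 (K-K_1)(K-K_2)(K+K_1+K_2)} = dh .
\]
The cubic $P(K)=-\tfrac13(K-K_1)(K-K_2)(K+K_1+K_2)$ is positive on the open interval $(K_2,K_1)$ and vanishes to first order at the endpoints, so the antiderivative $F(K)=\int \frac{dK}{P(K)}$ is a strictly monotone diffeomorphism from $(K_2,K_1)$ onto $\mathbb{R}$ with logarithmic behaviour at both ends; hence $K=F^{-1}(h+c)$ is well-defined, smooth, and takes values in $(K_2,K_1)$ on $\mathcal{M}'$. The key local computations are then: (i) at a simple pole of $\omega$ where $h\to+\infty$ (respectively $-\infty$), $K\to K_1$ (resp. $K\to K_2$), and a direct expansion shows $g$ extends smoothly across the point when the residue equals $\sigma$ or $\sigma\lambda$, and extends as a conical singularity of angle $2\pi\alpha$ when the residue is $\sigma\alpha$ or $\sigma\lambda\alpha$ — this matches the residue dictionary in the bullet list; (ii) at a zero of $\omega$ of order $\alpha-1$, $h$ has a critical point, $K$ extends smoothly with $dK=0$ there, and one checks $g$ has a conical singularity of angle $2\pi\alpha$; (iii) away from these points, differentiating $K=F^{-1}(h+c)$ gives $dK=P(K)\,dh=P(K)(\omega+\bar\omega)$, and then the definition $g=\tfrac43 P(K)\,\omega\bar\omega$ together with the HCMU equation $K_{,zz}=0$ can be verified by a direct local computation (essentially because $\nabla K=\sqrt{-1}e^{-2\varphi}K_{\bar z}\partial_z$ becomes the holomorphic vector field dual to $\omega$).

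I would then confirm that $g>0$ on $\mathcal{M}'$ (immediate from $P(K)>0$ there and $\omega\bar\omega\ge 0$), that $K$ so constructed is indeed the Gaussian curvature of $g$ (this is the content of verifying $K_{,zz}=0$ and the Gauss equation simultaneously — both follow from the structure of (\ref{sys0})), and finally that $\omega$ is recovered as the character $1$-form, i.e. $\omega(\nabla K)=\tfrac{\sqrt{-1}}{4}$, which is forced by the normalization chosen in $F$. The main obstacle I anticipate is not the global topology — condition 3) is exactly what lets $h$ exist globally and single-valued — but the delicate local analysis at the poles and zeros of $\omega$: one must show that the metric closes up to precisely a conical (or smooth) point with the predicted cone angle, which requires a careful asymptotic matching of the $\log$-type singularity of $h$ against the $\log$-type behaviour of $F(K)$ near $K_1,K_2$, and separately a normal-form computation near a zero of $\omega$ where $h$ has a nondegenerate-type critical structure of the appropriate order. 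Everything else is a local ODE/coordinate computation, and the positivity and finiteness of area/energy follow once the cone angles are identified.
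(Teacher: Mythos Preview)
Your proposal is correct and follows essentially the same approach as the paper: fix constants $K_1>K_2>-(K_1+K_2)$, solve the ODE in (\ref{sys0}) for $K$ using the exactness of $\omega+\bar\omega$, define $g$ via the second equation of (\ref{sys0}), and then verify the local cone-angle dictionary at the poles and zeros of $\omega$. The paper's proof is terser (it simply asserts the ODE has a unique solution and that ``one can prove'' $g$ is an HCMU metric with the stated properties), whereas you make the mechanism explicit by inverting the antiderivative $F$; the only slip is a constant factor in your formula for $g$ (it should be $g=4P(K)\,\omega\bar\omega=-\tfrac{4}{3}(K-K_1)(K-K_2)(K+K_1+K_2)\,\omega\bar\omega$, not $\tfrac{4}{3}P(K)$), which matters for recovering the exact normalization $\omega(\nabla K)=\tfrac{\sqrt{-1}}{4}$.
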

\begin{proof}
First by \textbf{Theorem} \ref{existenc of 1-form} a meromorphic 1-form on $\mathcal{M}$ satisfying the conditions 1), 2), 3) in \textbf{Theorem} \ref{from 1-form to HCMU} always exists. Suppose $p_{1},\ldots,p_{L}$ are the poles of $\omega$ in which $p_{1},\ldots,p_{J}$ are the poles where the residues of $\omega$ are negative and $p_{J+1},\ldots,p_{L}$ are the poles where the residues of $\omega$ are positive. Let $K_{1},K_{2}$ be two real numbers satisfying:
$$K_{1}>0,~~K_{1}>K_{2}>-(K_{1}+K_{2}).$$
 Consider the following equation:
\begin{equation}\label{K-equ-1}
 \cfrac{dK}{-\frac{1}{3}(K-K_1)(K-K_2)(K+K_1+K_2)}=\omega+\bar{\omega},~~\text{and}~K(p_{0})=K_{0},
\end{equation}
where $K_{2}< K_{0}< K_{1}$ and $p_{0}\in \mathcal{M}\setminus\{p_{1},\ldots,p_{L}\}$. One can prove that there exists a unique solution $K$ of (\ref{K-equ-1}) on $\mathcal{M}$ which satisfies that $K$ is smooth on $\mathcal{M}\setminus\{p_{1},\ldots,p_{L}\}$ and is continuous on $\mathcal{M}$. Then construct a metric
$$ g=-\frac{4}{3}(K-K_1)(K-K_2)(K+K_1+K_2)\omega \bar{\omega}.$$
One can prove $g$ is a non-CSC HCMU metric, $K$ is the Gauss curvature of $g$ with $K_{1},K_{2}$ being the maximum and the minimum of $K$ and $\omega$ is the character 1-form of $g$. Therefore $g$ has the conical singularities at the zeros and the poles of $\omega$. Denote $-\frac{3}{(K_{1}-K_{2})(K_{2}+2K_{1})}$ by $\sigma$ and $-\frac{2K_{1}+K_{2}}{2K_{2}+K_{1}}$ by $\lambda$. At the zeros of $\omega$ the singular angles of $g$ are of the form $2\pi(ord_{p}(\omega)+1)$, and at the poles of $\omega$ the singular angles of $g$ are of the form $2\pi\frac{Res_{p}(\omega)}{\sigma}$ or $2\pi\frac{Res_{p}(\omega)}{\lambda\sigma}$ depending on the sign of $Res_{p}(\omega)$. $\frac{Res_{p}(\omega)}{\sigma}=1$ or $\frac{Res_{p}(\omega)}{\lambda\sigma}=1$ means that $p$ is a smooth point of $g$.
\end{proof}

In \cite{Xb}, Q. Chen, B. Xu and Y.Y. Wu expanded upon \textbf{Theorem} \ref{from 1-form to HCMU} as follows.
\begin{theorem}[\cite{Xb}]\label{from 1-form to HCMU1}
 Let $\mathcal{M}$ be a compact Riemann surface and $\omega$ be a meromorphic 1-form
 on $\mathcal{M}$ satisfying the conditions:
\begin{itemize}
  \item[1)]$\omega$ only has simple poles,
  \item[2)]At each pole the residue of $\omega$ is a real number,
  \item[3)]$\omega+\bar{\omega}$ is exact on $\mathcal{M}\setminus \{\text{poles of}~\omega\}$.
\end{itemize}
Then, there exists a non-CSC HCMU metric with cusp singularities and conical singularities such that $\omega$ is the character 1-form of the metric.
\end{theorem}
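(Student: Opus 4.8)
The plan is to imitate the construction used to prove \textbf{Theorem}~\ref{from 1-form to HCMU}, but with the cubic polynomial degenerated so that its lowest root becomes double; this double root is the algebraic reflection of the relation $K_2=-\tfrac12K_1$ which, as recalled in the preliminaries, must hold when an HCMU metric has a cusp. Given a meromorphic $1$-form $\omega$ with simple poles $p_1,\dots,p_L$, real residues and $\sum_l \mathrm{Res}_{p_l}\omega=0$, relabel the poles so that $\mathrm{Res}_{p_l}\omega<0$ for $1\le l\le J$ and $\mathrm{Res}_{p_l}\omega>0$ for $J<l\le L$ (both ranges are nonempty since the residues are nonzero and sum to zero). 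Fix any $\mu<0$ and set $K_1:=-2\mu>0$ and $K_2:=\mu$. Then $K_1+K_2=-K_2$, so $K+K_1+K_2=K-K_2$, and the cubic $-\tfrac13(K-K_1)(K-K_2)(K+K_1+K_2)$ collapses to $-\tfrac13(K-K_1)(K-K_2)^2$, with a double zero at the would-be minimal value $K_2$.

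Next, fix $K_0\in(K_2,K_1)$ and $p_0\in\mathcal{M}\setminus\{p_1,\dots,p_L\}$ and consider the equation
$$
\frac{dK}{-\tfrac13(K-K_1)(K-K_2)^2}=\omega+\bar\omega,\qquad K(p_0)=K_0 .
$$
By condition 3), $\omega+\bar\omega=df$ for a single-valued smooth function $f$ on $\mathcal{M}\setminus\{p_1,\dots,p_L\}$, so the equation integrates to $F(K)=f+c$, where $F$ is a primitive (in $K$) of $-3\big[(K-K_1)(K-K_2)^2\big]^{-1}$. A partial-fraction computation gives
$$
F(K)=\frac{-3}{(K_1-K_2)^2}\log(K_1-K)+\frac{3}{(K_1-K_2)^2}\log(K-K_2)-\frac{3}{(K_1-K_2)(K-K_2)}+C ,
$$
so $F$ is a smooth increasing bijection of $(K_2,K_1)$ onto $\mathbb{R}$, and hence $K:=F^{-1}(f+c)$ is a well-defined smooth function on $\mathcal{M}\setminus\{p_1,\dots,p_L\}$ with values in $(K_2,K_1)$. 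In a local coordinate $z$ centred at a pole $p_l$ one has $\omega+\bar\omega=2\,\mathrm{Res}_{p_l}\omega\cdot d\log|z|+(\text{smooth exact})$, so $f=2\,\mathrm{Res}_{p_l}\omega\cdot\log|z|+O(1)$; thus $f\to+\infty$ at $p_l$ for $l\le J$ and $f\to-\infty$ for $l>J$, and $K$ extends continuously across every $p_l$ with $K(p_l)=K_1$ for $l\le J$ and $K(p_l)=K_2$ for $l>J$. Since $\omega+\bar\omega$ vanishes only at the zeros of $\omega$, the function $K$ has no critical points apart from those zeros (where it takes interior values, hence these are saddle points) and the poles; in particular $K$ is non-constant.

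Then set $g:=-\tfrac43(K-K_1)(K-K_2)^2\,\omega\bar\omega$. Since $(K-K_1)(K-K_2)^2<0$ wherever $K\in(K_2,K_1)$, this is a genuine smooth conformal metric on $\mathcal{M}$ away from the poles and the zeros of $\omega$, and the same computation as in the proof of \textbf{Theorem}~\ref{from 1-form to HCMU} (using $dK=-\tfrac13(K-K_1)(K-K_2)^2(\omega+\bar\omega)$ and the fact that $\omega$ has a holomorphic, nowhere-vanishing coefficient away from its zeros) shows $g$ is HCMU with Gaussian curvature $K$ and character $1$-form $\omega$. It remains to read off the singularities. At a zero of $\omega$ of order $m$ the local analysis is identical to the conical case and gives a conical singularity of angle $2\pi(m+1)$. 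At $p_l$ with $l\le J$, combining $F(K)\sim\tfrac{-3}{(K_1-K_2)^2}\log(K_1-K)$ with $f\sim 2\,\mathrm{Res}_{p_l}\omega\cdot\log|z|$ yields $K_1-K\sim|z|^{\gamma}$ with $\gamma=-6\mu^2\,\mathrm{Res}_{p_l}\omega>0$, hence $g\sim \mathrm{const}\cdot|z|^{\gamma-2}|dz|^2$, a conical singularity of angle $2\pi\alpha$ with $\alpha=\gamma/2=-3\mu^2\,\mathrm{Res}_{p_l}\omega$. At $p_l$ with $l>J$, the term of $F$ dominating near $K_2$ is $-\tfrac{3}{(K_1-K_2)(K-K_2)}=\tfrac{1}{\mu(K-K_2)}$, so $\tfrac{1}{\mu(K-K_2)}\sim 2\,\mathrm{Res}_{p_l}\omega\cdot\log|z|$ gives $K-K_2\sim\big(2\mu\,\mathrm{Res}_{p_l}\omega\cdot\log|z|\big)^{-1}\to 0^{+}$, and therefore
$$
g\sim -\tfrac43(K_2-K_1)(K-K_2)^2|\omega|^2\sim \frac{-1}{\mu}\cdot\frac{|dz|^2}{|z|^2(\log|z|)^2},
$$
a constant multiple of the model cusp metric; writing $g=e^{2\psi}|dz|^2$ one gets $\psi=-\log|z|-\log\big|\log|z|\big|+O(1)$, and a direct computation gives $\lim_{r\to0}\frac{1}{2\pi}\int_0^{2\pi}\big(r\frac{\partial\psi}{\partial r}+1\big)d\theta=0$, so $p_l$ is a weak cusp. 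Thus $g$ is a non-CSC HCMU metric, conical at the zeros of $\omega$ and at $p_1,\dots,p_J$, with cusps at $p_{J+1},\dots,p_L$, and with $\omega$ as its character $1$-form; it has finite area and finite Calabi energy by inspection of these local models.

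The only place this argument genuinely departs from \textbf{Theorem}~\ref{from 1-form to HCMU} --- and the main obstacle --- is the asymptotic analysis at the positive-residue poles: the double root at $K_2$ forces $K$ to approach its minimum only at the logarithmic rate comparable to $1/|\log|z||$ rather than at a power rate, so one must track the subleading terms carefully both to recognise the cusp conformal factor $|z|^{-2}(\log|z|)^{-2}$ and to verify the weak-cusp condition of the definition; one must also check that the continuous extension of $K$, which attains the endpoint value $K_2$ at these poles, still produces an admissible finite-area, finite-energy HCMU metric there, i.e. that the degenerate cubic is the correct replacement. (Alternatively, one could try to obtain the statement as a limit of the conical metrics of \textbf{Theorem}~\ref{from 1-form to HCMU} for the fixed $\omega$ as $K_2\to-\tfrac12K_1$, but controlling the conformal factors near the poles that become cusps is of comparable difficulty.)
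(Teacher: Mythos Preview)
Your proposal is correct and follows essentially the same approach as the paper: fix $\mu<0$, integrate the degenerate ODE $dK/\big(-\tfrac13(K-\mu)^2(K+2\mu)\big)=\omega+\bar\omega$ with initial value in $(\mu,-2\mu)$, and set $g=-\tfrac43(K-\mu)^2(K+2\mu)\,\omega\bar\omega$. The paper's proof merely asserts the local behaviour at the poles (``One can prove\ldots''), whereas you actually carry out the partial-fraction inversion of $F$ and the asymptotic computations that identify the conical angles $2\pi(-3\mu^{2}\,\mathrm{Res}_{p_l}\omega)$ at negative-residue poles and the cusp model $|z|^{-2}(\log|z|)^{-2}|dz|^{2}$ at positive-residue poles; this is precisely the detail the paper omits.
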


\begin{proof}
Suppose $p_{1},\ldots,p_{L}$ are the poles of $\omega$ in which $p_{1},\ldots,p_{J}$ are the poles where the residues of $\omega$ are negative and $p_{J+1},\ldots,p_{L}$ are the poles where the residues of $\omega$ are positive. Let $\mu<0$ be a constant. Consider the following equation:
\begin{equation}\label{K-equ-2}
 \cfrac{dK}{-\frac{1}{3}(K-\mu)^{2}(K+2\mu)}=\omega+\bar{\omega},~~\text{and}~K(p_{0})=K_{0},
\end{equation}
where $\mu< K_{0}< -2\mu$ and $p_{0}\in \mathcal{M}\setminus\{p_{1},\ldots,p_{L}\}$. One can prove that there exists a unique solution $K$ of (\ref{K-equ-2}) on $\mathcal{M}$ which satisfies that $K$ is smooth on $M\setminus\{p_{1},\ldots,p_{L}\}$ and is continuous on $\mathcal{M}$. Then construct a metric
$$ g=-\frac{4}{3}(K-\mu)^{2}(K+2\mu)\omega \bar{\omega}.$$
One can prove $g$ is a non-CSC HCMU metric, $K$ is the Gauss curvature of $g$ with $-2\mu,\mu$ being the maximum and the minimum of $K$ and $\omega$ is the character 1-form of $g$. Therefore $g$ has the conical singularities at the zeros and the poles with negative residues of $\omega$, and cups singularities at the poles with positive residues of $\omega$. At the zeros of $\omega$ the singular angles of $g$ are of the form $2\pi(ord_{p}(\omega)+1)$, and at the poles with negative residues of $\omega$ the singular angles of $g$ are of the form $2\pi(-3\mu^{2}Res_{p}(\omega))$. $-3\mu^{2}Res_{p}(\omega)=1$ means that $p$ is a smooth point of $g$.
\end{proof}

To construct a  prescribed non-CSC HCMU metric, \textbf{Theorem} \ref{from 1-form to HCMU} or \ref{from 1-form to HCMU1} assert that it suffices to find a suitable meromorphic
1-form meeting the criteria. However, finding such a 1-form can be challenging due to the unknown  smooth points in the metric's singularities and the need to identify which points among the given ones are zeros of the 1-form. It's worth noting that  a meromorphic 1-form on the Riemann sphere $S^{2}$ that adheres to the conditions 1) and 2) in \textbf{Theorem} \ref{from 1-form to HCMU} or \ref{from 1-form to HCMU1} automatically fulfills condition 3) (cf. \cite{CW2}). By this fact, Y.Y. Wu and the second author \cite{Wei} established the following theorem.
\begin{theorem}[\cite{Wei}]\label{W-M}
Let $p_1,\ldots,p_N $ be $N (N \geq3)$ points on $S^{2}$ and
 $2\pi\alpha_{1},\ldots,2\pi\alpha_{N}$ be $N$
positive real numbers with $\alpha_{n}\neq1$, for $n = 1,2,\ldots,N$. If at least $ N-2$ of the $\alpha_{1},\ldots,\alpha_{N}$ are integers, then there exists a non-CSC HCMU metric which has conical singularities
$p_{1},\ldots,p_{N}$ with singular angles $2\pi\alpha_{1},\ldots,2\pi\alpha_{N}$ respectively.
\end{theorem}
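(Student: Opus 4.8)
The plan is to realise the metric through \textbf{Theorem} \ref{from 1-form to HCMU}, so that the whole problem becomes the construction of a suitable meromorphic $1$-form $\omega$ on $S^{2}$. Since on $S^{2}$ conditions $1)$ and $2)$ of \textbf{Theorem} \ref{from 1-form to HCMU} already force condition $3)$, I only need $\omega$ to have simple poles with real residues together with the correct divisor structure. After an automorphism of $S^{2}$ I may assume $\infty\notin\{p_{1},\dots,p_{N}\}$, and I relabel so that $\alpha_{1},\dots,\alpha_{N'}$ are the integer exponents (each $\ge 2$) and $\alpha_{N'+1},\dots,\alpha_{N}$ the non-integer ones, with $N-N'\le 2$ by hypothesis. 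I will look for
\[
\omega \;=\; \sigma\Bigl(\sum_{i=N'+1}^{N}\frac{c_{i}}{z-p_{i}}+\sum_{j=1}^{m}\frac{d_{j}}{z-q_{j}}\Bigr)\,dz ,
\]
i.e. $\sigma$ times the logarithmic differential of a (multivalued) product, where the auxiliary points $q_{1},\dots,q_{m}$ are to be chosen, $c_{i}\in\{\alpha_{i},\lambda\alpha_{i}\}$, $d_{j}\in\{1,\lambda\}$, and $\sigma,\lambda$ are the constants attached (as recalled above) to a choice of $K_{1}>0$, $K_{1}>K_{2}>-(K_{1}+K_{2})$. For such an $\omega$ the residue list recalled above shows that each $p_{i}$ with $i>N'$ becomes a conical point of angle $2\pi\alpha_{i}$ (a maximum or a minimum of $K$ according to the sign of $c_{i}$), each $q_{j}$ becomes a \emph{smooth} extremum of $K$ (because $Res_{q_{j}}(\omega)/\sigma\in\{1,\lambda\}$), and the zeros of $\omega$ become conical saddle points; hence the construction succeeds as soon as the zeros of $\omega$ sit exactly at $p_{1},\dots,p_{N'}$ with $ord_{p_{i}}(\omega)=\alpha_{i}-1$.

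First some bookkeeping pins down $m$ and the signs. For $\omega$ to be regular at $\infty$ the residue theorem gives $\sum_{i>N'}c_{i}+\sum_{j}d_{j}=0$, and a degree count (so that $\omega$ has no zero at $\infty$ either) forces $m=D-(N-N')+2$, where $D:=\sum_{i=1}^{N'}(\alpha_{i}-1)\ge N'\ge N-2\ge 1$; since $N-N'\le 2$ this gives $m\ge D\ge 1$, so auxiliary points are always available. Writing $A$ (resp.\ $B$) for the total of the weights $\{\alpha_{i}\text{ or }1\}$ over the poles designated as maxima (resp.\ minima), the vanishing of the residue sum reads $\lambda=-A/B$; a short case check over $N-N'\in\{0,1,2\}$ shows one can always make the designation so that $A>B>0$ — for instance take a single minimum and let every other pole be a maximum, and when $N-N'=2$ put the larger of the two non-integer points at a maximum — hence $\lambda<-1$. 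Fixing $K_{1}=1$ and the unique $K_{2}=-\tfrac{\lambda+2}{2\lambda+1}$ matching this $\lambda$, one checks that $-\tfrac12<K_{2}<1$ for every $\lambda<-1$, so the inequalities on $K_{1},K_{2}$ hold, $\sigma,\lambda$ take the intended values, and all prescribed residues are real.

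With $m$, the $c_{i}$, the $d_{j}$ and $\sigma,\lambda$ now fixed, the one remaining demand — that the zeros of $\omega$ fall exactly on $p_{1},\dots,p_{N'}$ with the correct multiplicities — is the polynomial identity $\Theta(z)=c\prod_{i=1}^{N'}(z-p_{i})^{\alpha_{i}-1}$, where $\Theta(z)$ (of degree $\le D$) is the numerator of $\omega/(\sigma\,dz)$ over the common denominator $\prod_{i>N'}(z-p_{i})\prod_{j}(z-q_{j})$, and $\Theta$ has coefficients polynomial in $q_{1},\dots,q_{m}$. Matching coefficients gives $D$ equations (after dividing out the leading coefficient $c$) in the $m$ unknowns $q_{1},\dots,q_{m}$, and this is exactly where the hypothesis enters: $N-N'\le 2$ is equivalent to $m\ge D$, i.e.\ to this system being (under)determined rather than overdetermined, which is why the positions of the $p_{i}$ do not obstruct. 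Equivalently, in the all-integer case one is asking for a rational function $R=A(z)^{b}B(z)^{-a}$ on $\mathbb{CP}^{1}$ (all zeros of a common order $b$, all poles of a common order $a$, $a>b\ge1$, $\lambda=-a/b$) whose critical points away from $0,\infty$ are exactly $p_{1},\dots,p_{N'}$ with local degrees $\alpha_{1},\dots,\alpha_{N'}$, i.e.\ polynomials $A,B$ solving $bA'B-aAB'=c\prod(z-p_{i})^{\alpha_{i}-1}$; in the mixed case this branched-cover picture is coupled to one or two ``football'' factors supplying the non-integer angles.

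The main obstacle is to prove that this algebraic system does admit a solution with the $q_{j}$ mutually distinct, distinct from the $p_{i}$, and with $c\ne0$, for \emph{every} configuration of $p_{1},\dots,p_{N}$; the rest of the argument is routine. I would attack it in one of two ways: either (i) an explicit/inductive construction, settling the small-$D$ cases by hand (where the cover $R$, or the polynomials $A,B$ above, can be written down directly) and passing from $N$ to $N+1$ by a controlled merging/splitting of a branch point; or (ii) a dimension-and-dominance argument — the map sending such a datum to its branch divisor $\sum(\alpha_{i}-1)[p_{i}]$ on $\mathbb{CP}^{1}$ has source of the expected dimension (this is the content of $m\ge D$) and is dominant, hence surjective onto the locus of divisors of this shape once one rules out escape to the discriminant, e.g.\ by exhibiting a single nondegenerate member and invoking irreducibility of the relevant Hurwitz-type parameter space. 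In either route the reason the conclusion is position-independent is that the existence of $\omega$ (equivalently of $R$, equivalently of $A,B$) is governed only by the combinatorial/arithmetic data $(\alpha_{1},\dots,\alpha_{N};a,b)$ and not by where the $p_{i}$ lie. Once such an $\omega$ is in hand, \textbf{Theorem} \ref{from 1-form to HCMU} produces a non-CSC HCMU metric whose conical singularities are precisely $p_{1},\dots,p_{N}$ with angles $2\pi\alpha_{1},\dots,2\pi\alpha_{N}$.
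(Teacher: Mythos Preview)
The paper does not actually prove this theorem: it is quoted from \cite{Wei} without argument, so there is no in-paper proof to compare against line by line. That said, your overall strategy --- reduce to building a character $1$-form and invoke \textbf{Theorem}~\ref{from 1-form to HCMU}, using that on $S^{2}$ conditions $1)$ and $2)$ already imply $3)$ --- is precisely the framework the paper exploits throughout (cf.\ \textbf{Propositions}~\ref{S-1-P-1}, \ref{S-2-P-1}--\ref{S-2-P-3}), and your divisor/residue bookkeeping (the count $m=D-(N-N')+2\ge D$, the relation $\lambda=-A/B$, and the recovery of admissible $K_{1},K_{2}$) is set up correctly.

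The genuine gap is the one you yourself flag as ``the main obstacle'': you never establish that the system $\Theta(z)=c\prod_{i=1}^{N'}(z-p_{i})^{\alpha_{i}-1}$ admits a solution in $q_{1},\dots,q_{m}$ with the $q_{j}$ pairwise distinct and disjoint from the $p_{i}$. That the system is not overdetermined ($m\ge D$) does not by itself guarantee a solution, let alone a nondegenerate one off the discriminant, and neither of your two proposed attacks (induction on $D$, or a dominance argument for the branch-divisor map) is carried out --- each would require real work to rule out coincidences and to produce the base case or the single nondegenerate witness. In the paper's own arguments the analogous step is never handled by bare dimension counts; instead one writes $\omega$ as (a scalar multiple of) $df/f$ for a rational map $f$ and appeals to ready-made Hurwitz-type realizability theorems for branched covers of $\overline{\mathbb{C}}$ (\textbf{Theorems}~\ref{M-F-T-3} and \ref{M-F-T}). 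Your route (ii) is essentially pointing at this, but until you either invoke such a theorem or supply the missing irreducibility/dominance argument, what you have is a plausible outline rather than a proof.
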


At last, we introduce the energy integral formula for non-CSC HCMU metrics, which was proved by using Stokes' formula in \cite{WW16} and will be used in the proof of our classification.
\begin{theorem}[\cite{WW16}]\label{WW16}
Suppose $\mathcal{M}$ is a compact Riemann surface, $p_{1},\ldots,p_{N}$ are $N$ points on $\mathcal{M}$, and $g$ is a non-CSC HCMU metric on $\mathcal{M}$ with singular points $p_{1},\ldots,p_{N}$. Suppose $K$ is the Gaussian curvature of $g$, $\{q_{1},\ldots,q_{s}\}\subseteq \mathcal{M}\setminus\{p_{1},\ldots,p_{N}\}$ is the set of smooth extremal points of $K$, and $\omega$ is the character 1-form of $g$. Set $\mathcal{M}'=\mathcal{M}\setminus\{p_{1},\ldots,p_{N},q_{1},\ldots,q_{s}\}$, and
$$
\mathcal{C}_{n}(g)=\int_{\mathcal{M}'} K^{n} dg,~n=0,1,2,\ldots,
$$
which are called the $n$-th energy integral. Then
$$
\mathcal{C}_{n}(g)=\frac{6\alpha_{max}(K_{1}^{n+1}-K_{2}^{n+1})}{(n+1)(K_{1}-K_{2})(K_{2}+2K_{1})},
$$
where $\alpha_{max}$ is the sum of conical angles at the maximum points of $K$, and $K_{1},K_{2}$ are the maximum and minimum of $K$, respectively.
\end{theorem}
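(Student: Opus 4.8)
The plan is to compute $\mathcal{C}_n(g)=\int_{\mathcal{M}'}K^n\,dg$ directly from the structure equations \eqref{sys0}, converting the area integral into a $1$-form integral and then applying Stokes' theorem on $\mathcal{M}$ with small punctures removed around the singular and extremal points. First I would use the second equation of \eqref{sys0}, namely $g=-\tfrac{4}{3}(K-K_1)(K-K_2)(K+K_1+K_2)\,\omega\bar\omega$, and the first equation, which identifies $\omega+\bar\omega$ with the exact form $dF$ where $F$ is a primitive of $\tfrac{dK}{-\frac{1}{3}(K-K_1)(K-K_2)(K+K_1+K_2)}$ on $\mathcal{M}'$ (existence of $F$ comes from the last bullet in Section 2.1, exactness of $\omega+\bar\omega$ off the poles). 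The key algebraic observation is that, writing $P(K):=-\tfrac{1}{3}(K-K_1)(K-K_2)(K+K_1+K_2)$, the area form can be rewritten as $dg = K^n\cdot(\text{something})$ times $\omega\wedge\bar\omega$, and since $\nabla K$ is (up to the factor $e^{-2\varphi}$) dual to $\omega$, one gets $dg$ proportional to $dK\wedge(\omega-\bar\omega)$ or equivalently to $d\big(H(K)\,(\omega-\bar\omega)\big)$ for a suitable primitive $H$. Concretely I expect $K^n\,dg$ to be expressible as $d\eta_n$ for an explicit $1$-form $\eta_n$ built from $\omega$ and a rational-in-$K$ coefficient, so that $\int_{\mathcal{M}'}K^n\,dg=\lim_{\varepsilon\to 0}\int_{\partial(\mathcal{M}\setminus\bigcup B_\varepsilon)}\eta_n$.

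Next I would evaluate the boundary terms. Around each point that is a zero of $\omega$ (a saddle point of $K$), $\omega$ is holomorphic and $dK$ vanishes there, so the contribution of the corresponding small circle tends to $0$. Around each pole of $\omega$ — these are exactly the maximum points and minimum points of $K$, whether singular or smooth — $\omega$ has a simple pole with real residue, $K$ extends continuously with value $K_1$ (at maxima) or $K_2$ (at minima), and the circle integral of $\eta_n$ picks out $2\pi$ times (residue of $\omega$) times the value of the rational coefficient of $\eta_n$ at $K=K_1$ or $K=K_2$. Here I would plug in the residue data from Section 2.1: at a maximum point with cone angle $2\pi\alpha$ the residue is $\sigma\alpha$ (or $\sigma$ if smooth), and at a minimum point it is $\sigma\lambda\alpha$ (or $\sigma\lambda$), where $\sigma=-\tfrac{3}{(K_1-K_2)(K_2+2K_1)}$ and $\lambda=-\tfrac{2K_1+K_2}{2K_2+K_1}$. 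Summing the maximum contributions produces the factor $\alpha_{\max}$ (the sum of cone angles at maxima, counting smooth maxima with weight $1$), and summing minimum contributions produces an analogous factor $\alpha_{\min}$; the combination of the two, together with the arithmetic identity $\alpha_{\max}-\lambda\,\alpha_{\min}=\text{(something forced by Gauss--Bonnet / the residue sum being zero)}$, should collapse to the single clean expression $\frac{6\alpha_{\max}(K_1^{n+1}-K_2^{n+1})}{(n+1)(K_1-K_2)(K_2+2K_1)}$. The powers $K_1^{n+1}$ and $K_2^{n+1}$ and the $\tfrac{1}{n+1}$ will emerge because the primitive $H$ of the relevant rational function, when the $K^n$ weight is inserted, contributes a term behaving like $\tfrac{1}{n+1}K^{n+1}$ near the extremal values (the other partial-fraction terms vanish at $K=K_1$ and $K=K_2$ because of the factors $(K-K_1)$, $(K-K_2)$ in $P(K)$).

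The main obstacle I anticipate is twofold: first, correctly identifying the explicit $1$-form $\eta_n$ with $d\eta_n=K^n\,dg$ — this requires a careful partial-fraction manipulation so that $\eta_n$ is single-valued on $\mathcal{M}'$ (one must use exactness of $\omega+\bar\omega$, i.e. replace $\omega+\bar\omega$ by $dF$, to kill the multivalued/logarithmic piece), and one must check the coefficient of $\eta_n$ is finite at $K=K_1,K_2$ so the boundary limits exist; second, the bookkeeping of residues and signs across maxima versus minima, where the factor $\lambda$ relating the two residue types must be used together with the constraint $\sum_p \mathrm{Res}_p(\omega)=0$ to eliminate $\alpha_{\min}$ in favor of $\alpha_{\max}$ and obtain the stated closed form. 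Once the primitive is pinned down, the rest is residue arithmetic; I would also double-check the normalization of $\omega$ (the convention $\omega(\nabla K)=\tfrac{\sqrt{-1}}{4}$) enters the overall constant, which is where the factor $6$ ultimately comes from.
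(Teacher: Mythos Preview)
The paper does not actually prove this theorem; it is quoted from the reference \cite{WW16}, and the only information given about the argument there is the remark that it ``was proved by using Stokes' formula.'' Your plan is therefore being compared against a one-line description rather than a full proof, and at that level it matches: you propose to write $K^n\,dg$ as an exact form and compute the integral by Stokes over $\mathcal{M}$ with small discs removed around the poles and zeros of $\omega$, which is exactly what the paper says is done in \cite{WW16}.

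Your outline is essentially correct and can be streamlined. From \eqref{sys0} one has $dK=P(K)(\omega+\bar\omega)$ with $P(K)=-\tfrac13(K-K_1)(K-K_2)(K+K_1+K_2)$, and the area form is $dg=\tfrac{i}{2}\cdot\big(-\tfrac{4}{3}P(K)\big)\,\omega\wedge\bar\omega$. Since $\omega$ is meromorphic (hence closed off its poles) and $dK\wedge\omega=-P(K)\,\omega\wedge\bar\omega$, one checks directly that
\[
d\!\left(\frac{i}{3}\,\frac{K^{n+1}}{n+1}\,(\omega-\bar\omega)\right)=K^{n}\,dg
\]
on $\mathcal{M}'$. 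This primitive is \emph{automatically} single-valued, because $K$ is a global continuous function and $\omega-\bar\omega$ is a globally defined smooth $1$-form; you do not need to invoke the exactness of $\omega+\bar\omega$ or introduce an auxiliary $F$, so the ``multivalued/logarithmic piece'' you worry about never appears. The boundary circles around zeros of $\omega$ contribute nothing (since $\omega-\bar\omega$ vanishes there and $K$ is bounded), while each simple pole $p$ contributes, up to a universal constant, $\dfrac{K(p)^{n+1}}{n+1}\,\mathrm{Res}_p(\omega)$. Plugging in $K(p)=K_1$ with residue $\sigma\alpha$ at maxima and $K(p)=K_2$ with residue $\sigma\lambda\alpha$ at minima, and using the residue theorem $\alpha_{\max}+\lambda\,\alpha_{\min}=0$ to eliminate $\alpha_{\min}$, gives the stated formula; the factor $6$ comes from $\sigma=-\dfrac{3}{(K_1-K_2)(K_2+2K_1)}$ combined with the constant in front of the primitive.
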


\subsection{One existence theorem for rational functions on the Riemann sphere}
In this subsection, we will review an existence result for rational functions on the Riemann sphere, as presented in \cite{SX20}. For more results,  we refer the reader to the references cited in \cite{SX20}.\par
Let $X$ and $Y$ be two compact, connected Riemann surfaces, and consider a holomorphic branched covering $f:X\rightarrow Y$ of degree $d$. At each point $q$ in $Y$, there is partition $\lambda(q)=(k_{1},\ldots,k_{r})$ of $d$ that characterizes the local behavior of $f$ near $q$. Over a suitable neighborhood of $q$ in $Y$, the map $f$ is equivalent to the map
$$\{1,\ldots,r\}\times \mathbb{D}\rightarrow \mathbb{D},~~(j,z)\mapsto z^{k_{j}},~~\text{where}~\mathbb{D}=\{z\in \mathbb{C}:|z|<1\},$$
with $q$ corresponding to $0$ in $\mathbb{D}$. For any partition $\lambda=(k_{1},\ldots,k_{r})$ of $d$, we define its length $Len(\lambda)=r$. We refer to a partition $\lambda$ of $d$ as non-trivial if $Len(\lambda)<d$. For the branched covering $f:X\rightarrow Y$, we call a point $q$ in $Y$ a branch point of $f$ if and only if $\lambda(q)$ is non-trivial, and we call the set of branch points of $f$ the branch set of $f$, denoted by $B_{f}$. The collection $\Lambda=\{\lambda(q):q\in B_{f}\}$ (with repetitions allowed) is called the branch data of $f$ and
$$v(f):=\sum_{q\in B_{f}}(d-Len(\lambda(q)))$$
the total branching data of $f$. By the Riemann-Hurwitz formula, we have
$$v(f)=2g(X)-2-d(2g(Y)-2),$$
where $g(X)$ (resp. $g(Y)$) denotes the genus of $X$ (resp. $Y$). Therefore, the total branching order $v(f)$ is an even non-negative integer.\par
A well-known realizability problem which arises in topology asks whether, given a compact connected Riemann surface $Y$ and a collection $\Lambda=\{\lambda_{1},\ldots,\lambda_{k}\}$ of non-trivial partitions of a positive integer $d$, there exist another compact connected Riemann surface $X$ together with a branched covering $f:X\rightarrow Y$ such that $\Lambda$ is its branch data. If such an $X$ and $f$ exist, we say that $\Lambda$ is realizable or realized by a branched covering. Boccara \cite{Bo82} obtained the following theorem.
\begin{theorem}[\cite{Bo82}]\label{M-F-T-3}
Suppose
$$\Lambda=\{(a_{1},\ldots, a_{p}), (b_{1},\ldots, b_{q}), (m+1,1,\ldots, 1)\}$$
is a collection of a partition of a positive integer $d$. Then there exists a branched covering of $\overline{\mathbb{C}}=\mathbb{C}\cup\{\infty\}$ with $\Lambda$ being its branch data if and only if it satisfies one of the following:\\
(i) $v(\Lambda)\geq 2d$ is even.\\
(ii) $v(\Lambda)=2d-2$ and $m<d/{\rm GCD}(a_{1},\ldots, a_{p},b_{1},\ldots, b_{q})$
\end{theorem}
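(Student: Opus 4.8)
The plan is to recast this realizability statement as a factorization problem in the symmetric group, via the classical correspondence (the Riemann existence theorem) between connected branched coverings of $\overline{\mathbb{C}}$ and their monodromy representations. Fix three distinct points $y_1,y_2,y_3\in\overline{\mathbb{C}}$ to carry the partitions $\lambda_1=(a_1,\ldots,a_p)$, $\lambda_2=(b_1,\ldots,b_q)$, and $\lambda_3=(m+1,1,\ldots,1)$. A connected branched covering $f\colon X\to\overline{\mathbb{C}}$ of degree $d$, unramified outside $\{y_1,y_2,y_3\}$ and with branch data $\Lambda$, exists if and only if there are permutations $\sigma,\tau,\rho\in S_d$ of cycle types $\lambda_1,\lambda_2,\lambda_3$ respectively, such that $\sigma\tau\rho=\mathrm{id}$ and $\langle\sigma,\tau,\rho\rangle$ acts transitively on $\{1,\ldots,d\}$ (transitivity encoding connectedness of $X$). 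Since $\lambda_3=(m+1,1,\ldots,1)$ forces $\rho$ to be a single $(m+1)$-cycle, the problem reduces to the following: \emph{decide when an $(m+1)$-cycle can be written as a product $\sigma\tau$ with $\sigma$ of type $\lambda_1$, $\tau$ of type $\lambda_2$, and $\langle\sigma,\tau\rangle$ transitive.} This is exactly Boccara's problem of writing a cycle as a product of two permutations of prescribed classes.

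For necessity, first note that the sign of a permutation of cycle type $\lambda$ equals $(-1)^{d-{\rm Len}(\lambda)}$, so $\mathrm{sgn}(\sigma)\,\mathrm{sgn}(\tau)\,\mathrm{sgn}(\rho)=(-1)^{v(\Lambda)}$; since $\sigma\tau\rho=\mathrm{id}$ has sign $+1$, $v(\Lambda)$ must be even. The Riemann--Hurwitz formula $v(\Lambda)=2g(X)-2+2d$ together with $g(X)\ge 0$ gives $v(\Lambda)\ge 2d-2$, with equality precisely when $g(X)=0$. To force the ${\rm GCD}$ condition in the boundary case $v(\Lambda)=2d-2$, I would use that here $X=\overline{\mathbb{C}}$ and $f$ is a rational function. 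Put $g={\rm GCD}(a_1,\ldots,a_p,b_1,\ldots,b_q)$ and place $y_1,y_2,y_3$ at $0,\infty,1$. Both divisors $f^{*}(0)=\sum_i a_iP_i$ and $f^{*}(\infty)=\sum_j b_jQ_j$ are divisible by $g$, hence so is the principal divisor $\mathrm{div}(f)=f^{*}(0)-f^{*}(\infty)$; as every degree-zero divisor on $\mathbb{P}^1$ is principal, $f=c\,h^{g}$ for a constant $c$ and a rational function $h$ of degree $d/g$. Over the value $1$ the ramification of $f=c\,h^{g}$ agrees with that of $h$ over the $g$ roots of $h^{g}=1/c$, and the ramification index of $h$ at any single point is at most $\deg h=d/g$; therefore the unique large part satisfies $m+1\le d/g$, i.e.\ $m<d/{\rm GCD}(a_1,\ldots,a_p,b_1,\ldots,b_q)$.

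The sufficiency is the substantive direction and the main obstacle. One must exhibit, for every datum satisfying (i) or (ii), an explicit transitive factorization $\sigma\tau=\rho^{-1}$. I would proceed by induction on $d$ (or on $p+q$), with small base factorizations checked by hand, and a reduction step that multiplies $\sigma$ or $\tau$ by a carefully chosen transposition so as to split or merge cycles and pass to a strictly smaller instance of the same type, all the while preserving transitivity and keeping the remaining factor a single long cycle. The delicate point is the genus-zero case $v(\Lambda)=2d-2$ under $m<d/g$: here the computation above suggests building $f$ from a degree-$(d/g)$ function whose ramification over the relevant fibers produces the reduced partitions and a single part of size $m+1\le d/g$, but realizing this as a \emph{connected} covering with branch set exactly $\{0,\infty,1\}$ requires a genuine construction rather than a formal reduction, and checking that transitivity is never lost under the surgery moves is the technical heart of the argument. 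These constructive steps are precisely the content of Boccara's theorem \cite{Bo82} (see also \cite{SX20}).
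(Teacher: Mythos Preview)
The paper does not prove this theorem at all: it is quoted verbatim as Boccara's result \cite{Bo82} and used as a black box in the proofs of Propositions~\ref{S-1-P-1}, \ref{S-2-P-1}, \ref{S-2-P-2}, \ref{S-2-P-3}. So there is no ``paper's own proof'' to compare against; you have gone further than the authors by sketching an argument.

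Your necessity direction is essentially complete and correct. The parity of $v(\Lambda)$ via signs, the Riemann--Hurwitz bound $v(\Lambda)\ge 2d-2$, and the divisor argument forcing $f=c\,h^{g}$ in the genus-zero case are all sound; the conclusion $m+1\le d/g$ follows because the ramification index of $f=c\,h^g$ at any preimage of $1$ equals that of $h$ over the corresponding $g$-th root of $1/c$, hence is at most $\deg h=d/g$.

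Your sufficiency direction, however, is not a proof but an acknowledgment that one is needed. You correctly identify the reduction to transitive factorizations of an $(m+1)$-cycle and gesture at an induction with transposition surgeries, but you do not carry out any step of it, and you end by citing \cite{Bo82} for exactly the content you were asked to supply. That is a genuine gap: the combinatorial construction (especially maintaining transitivity through the surgery moves, and handling the genus-zero boundary case) is the entire difficulty, and nothing in your outline would let a reader reconstruct it. Since the paper itself simply cites the result, your write-up is already more informative than the paper's treatment---but it is an outline of a strategy, not a proof.
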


Recently, J.J. Song and B. Xu \cite{SX20} generalized the second part of Boccara's result.
\begin{theorem}[\cite{SX20}]\label{M-F-T}
Let $d$ and $l$ be two positive integers. Consider
a collection
$$\Lambda=\{(a_{1},\ldots, a_{p}), (b_{1},\ldots, b_{q}), (m_{1}+1,1,\ldots, 1),\ldots, (m_{l}+1, 1,\ldots,1)\}$$
of $l+2$ partitions of $d$ where $(m_{1},\ldots,m_{l})$ is a partition of $p+q-2>0$.
Then there exists a rational function on $\overline{\mathbb{C}}=\mathbb{C}\cup\{\infty\}$ with $\Lambda$ being its branch data if and only if
$$max(m_{1},\ldots, m_{l})<d/{\rm GCD}(a_{1},\ldots, a_{p},b_{1},\ldots, b_{q}).$$
\end{theorem}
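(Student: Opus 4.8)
The plan is to convert the statement into a factorization problem in the symmetric group $S_d$ via the Riemann existence theorem, to settle necessity by a block‑counting argument in the spirit of Boccara's Theorem \ref{M-F-T-3}, and to obtain sufficiency by induction on $l$ anchored at that same theorem. Concretely: by the Riemann existence theorem, a degree‑$d$ rational function on $\overline{\mathbb{C}}$ with branch data $\Lambda$ and branch points $x_1,\dots,x_{l+2}$ is the same datum as a tuple $(\sigma,\tau,\rho_1,\dots,\rho_l)$ in $S_d$ with $\sigma$ of cycle type $(a_1,\dots,a_p)$, $\tau$ of cycle type $(b_1,\dots,b_q)$, each $\rho_i$ an $(m_i+1)$-cycle, $\sigma\tau\rho_1\cdots\rho_l=\mathrm{id}$, and $\langle\sigma,\tau,\rho_1,\dots,\rho_l\rangle$ transitive on $\{1,\dots,d\}$. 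The source surface is automatically $\overline{\mathbb{C}}$, since the hypotheses force $v(\Lambda)=(d-p)+(d-q)+\sum_i m_i=2d-2$, whence Riemann--Hurwitz gives genus $0$. Writing $g:=\mathrm{GCD}(a_1,\dots,a_p,b_1,\dots,b_q)$, the theorem then reads: such a transitive factorization exists iff $\max_i m_i<d/g$.

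For necessity, suppose a transitive factorization is given. Every cycle length of $\sigma$ and of $\tau$ is a multiple of $g$; passing to the block decompositions these cycles induce and tracking how the supports of the single cycles $\rho_i$ must intersect the $\sigma$- and $\tau$-orbits in order for $\langle\sigma,\tau,\rho_1,\dots,\rho_l\rangle$ to remain transitive, one extracts the inequality $m_i+1\le d/g$ for each $i$. This is exactly the argument underpinning Boccara's Theorem \ref{M-F-T-3}(ii), which is the case $l=1$ of the present statement (there $v(\Lambda)=2d-2$ forces the unique special part to be $m=p+q-2$).

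For sufficiency, assume $\max_i m_i<d/g$ and induct on $l$; the base case $l=1$ is Theorem \ref{M-F-T-3}(ii). The inductive engine is the elementary surgery: an $n$-cycle factors as a product of an $(n_1)$-cycle and an $(n_2)$-cycle sharing exactly one point whenever $n_1+n_2=n+1$, and replacing one factor $\rho_i$ of a transitive factorization by such a pair keeps the product $\mathrm{id}$, preserves all the other cycle types, and can only enlarge the generated group, so transitivity survives. Given $\Lambda$ with $l\ge 2$, order $m_1\ge\cdots\ge m_l$ and let $\Lambda'$ merge the two smallest parts into $m_{l-1}+m_l$. Since each $a_i,b_j\ge g$ we have $p,q\le d/g$, hence $\sum_i m_i=p+q-2\le 2(d/g)-2$, and a pigeonhole estimate shows that for $l$ sufficiently large the merged part satisfies $m_{l-1}+m_l<d/g$; then $\Lambda'$ has $l-1$ parts, still meets the hypothesis, is realizable by the inductive hypothesis, and splitting its merged cycle via the surgery realizes $\Lambda$. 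The remaining small values of $l$ are treated by constructing the factorization directly, distributing the cycles $\rho_1,\dots,\rho_l$ one at a time inside a fixed $g$-adapted decomposition of $\{1,\dots,d\}$ and checking transitivity at the end.

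The main obstacle is precisely this small-$l$ regime — already $l=2$ with $\sum_i m_i\ge d/g$ — where merging parts would overshoot $d/g$ and Boccara's theorem contributes nothing directly, since its $l=1$ reduction would require a cycle of length $\sum_i m_i+1>d/g$. There the factorization must be exhibited by hand while meeting three rigid constraints simultaneously: the cycle types of $\sigma$ and $\tau$ must be exactly $(a_i)$ and $(b_j)$, each $\rho_i$ must be an honest $(m_i+1)$-cycle with $\sigma\tau\rho_1\cdots\rho_l=\mathrm{id}$, and the whole tuple must generate a transitive subgroup of $S_d$. It is at this step, and essentially only here, that one must use the hypothesis in its sharp form $\max_i m_i<d/g$ rather than the weaker $\sum_i m_i<d/g$, threading each $\rho_i$ through the fixed $g$-adapted pattern.
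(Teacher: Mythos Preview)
The paper does not prove this theorem; it is quoted from Song--Xu \cite{SX20} and invoked as a black box in the proof of Proposition~\ref{S-2-P-1}. There is therefore no proof in the present paper to compare your proposal against.

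As for the proposal itself: the framework is correct (Riemann existence, reduction to transitive factorizations in $S_d$, induction on $l$ anchored at Boccara's $l=1$ case), but the argument is incomplete precisely where you say it is. The merge-and-split surgery lowers $l$ only when $m_{l-1}+m_l<d/g$, and your pigeonhole bound secures this only for $l$ large; for small $l$ --- already $l=2$ with, e.g., $m_1=m_2=d/g-1$ --- merging overshoots and you fall back on an unspecified ``direct construction.'' That construction is not a routine detail to be filled in later: it is the substantive content of the theorem beyond Boccara. Writing that one must ``thread each $\rho_i$ through the fixed $g$-adapted pattern'' while simultaneously matching the cycle types of $\sigma$ and $\tau$, achieving $\sigma\tau\rho_1\cdots\rho_l=\mathrm{id}$, and maintaining transitivity, is a restatement of the problem rather than a solution to it. The necessity direction is likewise only gestured at: ``passing to the block decompositions\ldots one extracts the inequality'' hides the mechanism by which transitivity of the full group forces $m_i+1\le d/g$ for each $i$, and that mechanism is not identical to the $l=1$ case since the supports of the several $\rho_i$ can interact with the $\langle\sigma,\tau\rangle$-orbits in more ways than a single cycle can.
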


\section{Classification of non-CSC HCMU metrics on $S^{2}_{\{\alpha\}}$}
In this section, we will give the proof \textbf{Theorem} \ref{mainth1}. Our strategy involves a case-by-case analysis.\par
If $\alpha=0$, according to \textbf{Theorem} \ref{Chen99}, a non-CSC HCMU metric $g$ exists on $S^{2}_{\{\alpha\}}$ if and only if its single singularity is a cusp. Furthermore, such a metric is uniquely determined by its total area and is necessarily rotationally symmetric. Therefore, to classify non-CSC HCMU metrics on $S^{2}_{\{\alpha\}}$, we need only consider the case where the singularity is conical.\par

(A) For $2\leq\alpha\in \mathbb{Z}$, a non-CSC HCMU metric $g$ on $S^{2}_{\{\alpha\}}$ can exist under the following two conditions. \par

(A-1) The singularity is an extremal point of the Gaussian curvature $K$ of $g$.  \par
Since $\alpha>1$, the singularity is a maximum point of $K$.  Denote the maximum and minimum values of $K$ by $K_{1}$ and $K_{2}$ respectively. Set
 $$\sigma=-\frac{1}{(K_{1}-K_{2})(K_{2}+2K_{1})},~~~\lambda=-\frac{2K_{1}+K_{2}}{2K_{2}+K_{1}}.$$
 Suppose the character 1-form of $g$ is $\omega$. Since $g$ has a single singularity and the singularity is a maximum of $K$, $\omega$ has two simple poles. Regard $S^{2}$ as $\mathbb{C}\cup\{\infty\}$. We can assume that $0$ and $\infty$ are poles of $\omega$, with $\infty$ representing the conical singularity. Then $Res_{0}(\omega)=\sigma\lambda, Res_{\infty}(\omega)=\alpha\sigma$. By the Residue theorem, we obtain $\sigma\lambda+\alpha\sigma=\sigma(\lambda+\alpha)=0$, which implies $\alpha=-\lambda=\frac{2K_{1}+K_{2}}{2K_{2}+K_{1}}$. Thus
 $$\omega=-\frac{\sigma\alpha}{z}dz.$$
Conversely, by \textbf{Theorem} \ref{from 1-form to HCMU}, there  exists a non-CSC HCMU metric $g$ on $S^{2}_{\{\alpha\}}$ such that the singularity is the maximum point of the  Gaussian curvature $K$ of $g$. Furthermore, by \textbf{Theorem} \ref{WW16}, one can easily check that $g$ is uniquely determined by the total area and $\alpha$, and $g$ must be rotationally symmetric.

(A-2) The singularity is the saddle point of the Gaussian curvature $K$ of $g$.\par
 Denote the maximum and minimum values of $K$ by $K_{1}$ and $K_{2}$ respectively. Set
 $$\sigma=-\frac{1}{(K_{1}-K_{2})(K_{2}+2K_{1})},~~~\lambda=-\frac{2K_{1}+K_{2}}{2K_{2}+K_{1}}.$$
 Suppose the character 1-form of $g$ is $\omega$. Since $g$ has a single singularity and the singularity is the saddle point of $K$, $\omega$ has a simple zero of order $\alpha-1$ and $\alpha+1$ simple poles which are smooth extremal points of $K$.  Suppose $K$ has $I_{1}$ maximum points and $I_{2}$ minimum points, then $I_{1}+I_{2}=\alpha+1$. By $\lambda=-\frac{2K_{1}+K_{2}}{2K_{2}+K_{1}}< -1$, we obtain $I_{1}> I_{2}$. \par
 Regard $S^{2}$ as $\mathbb{C}\cup\{\infty\}$. We can assume that $0$ is the zero of $\omega$, and that $a_{1},\ldots,a_{I_{1}}$ are poles of $\omega$ with negative residues, while $b_{1},\ldots,b_{I_{2}}$ are poles of $\omega$ with positive residues. Then $Res_{a_{k}}(\omega)=\sigma,k=1,\ldots,I_{1}~ \text{and}~ Res_{b_{l}}(\omega)=\sigma\lambda,l=1,\ldots,I_{2}$. From these, we derive $I_{1}\sigma+I_{2}\sigma\lambda=\sigma(I_{1}+I_{2}\lambda)=0$, which leads to $\lambda=-\frac{I_{1}}{I_{2}}$. Thus there is a nonzero complex number $\widetilde{B}$ such that
 $$\omega=\frac{\widetilde{B}z^{\alpha-1}}{\prod_{k=1}^{I_{1}}(z-a_{k})\prod_{l=1}^{I_{2}}(z-b_{l})}dz=\sigma(\sum_{k=1}^{I_{1}}\frac{1}{z-a_{k}}-\frac{I_{1}}{I_{2}}\sum_{l=1}^{I_{2}}\frac{1}{z-b_{l}})dz.$$

\begin{proposition}\label{S-1-P-1}
Given an integer $\alpha\geq 2$, let $I_{1}$ and $I_{2}$ be two positive integers satisfying $I_{1}+I_{2}=\alpha+1$ and $I_{1}>I_{2}$. Then, there exists a meromorphic 1-form
$\omega$ on $S^{2}=\mathbb{C}\cup\{\infty\}$, defined by the form
 $$\omega=\frac{Bz^{\alpha-1}}{\prod_{k=1}^{I_{1}}(z-a_{k})\prod_{l=1}^{I_{2}}(z-b_{l})}dz=(\sum_{k=1}^{I_{1}}\frac{1}{z-a_{k}}-\frac{I_{1}}{I_{2}}\sum_{l=1}^{I_{2}}\frac{1}{z-b_{l}})dz,$$
where $B\in\mathbb{C}\setminus\{0\}$ is a constant, and $a_{1},\ldots,a_{I_{1}},b_{1},\ldots,b_{I_{2}}\in \mathbb{C}\setminus\{0\}$ are distinct complex numbers, if and only if one of the following two conditions holds.\\
(1) $I_{2}=1$;\\
(2) $I_{2}\geq 2$ and $I_{2}\nmid I_{1}$.
\end{proposition}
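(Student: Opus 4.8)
The plan is to recognize that such a meromorphic $1$-form $\omega$ exists on $S^2$ if and only if a certain rational function exists on $\overline{\mathbb{C}}$, and then apply Boccara's theorem (\textbf{Theorem} \ref{M-F-T-3}) — or equivalently the $l=1$ case of \textbf{Theorem} \ref{M-F-T} of Song and Xu — with the appropriate branch data. First I would set up the dictionary between $\omega$ and a rational map. Writing $\omega = \frac{Bz^{\alpha-1}}{\prod_k(z-a_k)\prod_l(z-b_l)}\,dz$ and noting $\alpha-1 = I_1 + I_2 - 2$, the condition that $\omega$ has residue $1$ at each $a_k$ and residue $-I_1/I_2$ at each $b_l$ (forced by the partial-fraction form in the statement) is exactly the statement that the primitive of $\omega$ away from its poles has a prescribed logarithmic-derivative structure. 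Concretely, I would look for a rational function $F$ with $\frac{dF}{F} = \frac{I_2}{\gcd(\cdots)}\big(\sum_k \frac{1}{z-a_k} - \frac{I_1}{I_2}\sum_l\frac{1}{z-b_l}\big)dz$ up to scaling; the integrality needed to make $F$ single-valued is what converts residues into the multiplicities of a branched cover. After clearing denominators, $F = c\,\frac{\prod_k (z-a_k)^{p}}{\prod_l (z-b_l)^{q}}$ with $p/q = I_1/(I_2 \cdot \text{something})$, and the constraint that $\deg(\text{numerator}) = \deg(\text{denominator})$ (so that $F$ has no extra zero/pole at $\infty$ beyond the ramification recorded at $z=0$) is exactly $I_1 p = I_2 q$, i.e. $(p,q)$ proportional to $(I_2, I_1)/\gcd(I_1,I_2)$.

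Next I would translate into branch data. Set $d$ = common degree of $F$ = $\mathrm{lcm}(I_1,I_2)\cdot(\text{a factor})$; more precisely, with $g = \gcd(I_1,I_2)$, take $F$ of degree $d = I_1 I_2 / g$. Then $F^{-1}(0)$ consists of the points $a_1,\dots,a_{I_1}$ each with multiplicity $I_2/g$, giving the partition $\lambda(0) = (I_2/g,\dots,I_2/g)$ of $d$ with length $I_1$; $F^{-1}(\infty)$ consists of $b_1,\dots,b_{I_2}$ each with multiplicity $I_1/g$, giving $\lambda(\infty) = (I_1/g,\dots,I_1/g)$ of length $I_2$; and $F^{-1}$ of one further value (the image of $z=0$, which I can normalize to $1$) has a single ramification point at $z=0$ of multiplicity $\mathrm{ord}_0 + 1 = \alpha = I_1+I_2-1$, i.e. the partition $(\alpha,\,1,\dots,1) = (I_1+I_2-1,1,\dots,1)$ of $d$. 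So the branch data is $\Lambda = \{(I_2/g)^{I_1},\,(I_1/g)^{I_2},\,(I_1+I_2-1,1,\dots,1)\}$, which is precisely of the shape handled by \textbf{Theorem} \ref{M-F-T-3} with $a_i \equiv I_2/g$, $b_j \equiv I_1/g$, $m = I_1+I_2-2$. I would then compute $v(\Lambda) = (d-I_1)+(d-I_2)+(d-(d-\alpha+1)) = 2d - I_1 - I_2 + \alpha - 1 = 2d - 2$ (using $\alpha+1 = I_1+I_2$), so we are always in the \emph{borderline} case $v(\Lambda) = 2d-2$, never in case (i). Hence by Boccara, realizability is equivalent to $m < d/\gcd(\text{all parts})$, i.e. $I_1+I_2-2 < \frac{I_1 I_2/g}{\gcd(I_1/g,\,I_2/g)} = \frac{I_1 I_2/g}{1} = I_1 I_2/g$.

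Finally I would verify that the arithmetic inequality $I_1 + I_2 - 2 < I_1 I_2 / \gcd(I_1,I_2)$, under the standing hypothesis $I_1 > I_2 \ge 1$, is equivalent to "$I_2 = 1$ or ($I_2 \ge 2$ and $I_2 \nmid I_1$)". When $I_2 = 1$: RHS $= I_1$, LHS $= I_1 - 1 < I_1$, always true. When $I_2 \mid I_1$ with $I_2 \ge 2$: then $\gcd(I_1,I_2) = I_2$, RHS $= I_1$, LHS $= I_1 + I_2 - 2 \ge I_1$, so the inequality fails. When $I_2 \nmid I_1$ and $I_2 \ge 2$: writing $g = \gcd(I_1,I_2) \le I_2/2$ (since $g$ is a proper divisor of $I_2$... actually $g \mid I_2$ and $g < I_2$ because $I_2\nmid I_1$ forces $g\neq I_2$), one checks $I_1 I_2/g \ge 2 I_1 > I_1 + I_2 - 2$ using $I_1 > I_2$; I would carry out this short estimate carefully. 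I also need to handle the converse direction — that an $\omega$ of the required form really does give a rational function with exactly this branch data and no spurious branching, in particular that the $a_k, b_l$ can be taken nonzero and distinct, and that $z=0$ really is the unique preimage of its image under $F$ with full ramification; this follows by reading \textbf{Theorem} \ref{M-F-T-3}/\ref{M-F-T} backwards together with \textbf{Theorem} \ref{from 1-form to HCMU}. The main obstacle I anticipate is bookkeeping: making the correspondence between "residues $1$ and $-I_1/I_2$" and "covering multiplicities $I_2/g$ and $I_1/g$" completely rigorous, and ensuring the third partition genuinely has the single large part $\alpha = I_1+I_2-1$ (equivalently $\mathrm{ord}_0\omega = \alpha - 1$) rather than being broken up — i.e. that the construction does not accidentally create extra branch points, which is where one must invoke the realizability theorem in the forward direction rather than just counting.
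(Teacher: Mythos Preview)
Your approach is correct and essentially the same as the paper's: both directions rest on the correspondence $\omega \leftrightarrow F=\exp\bigl(\int \tfrac{I_2}{g}\,\omega\bigr)$ with $g=\gcd(I_1,I_2)$ and on Boccara's \textbf{Theorem}~\ref{M-F-T-3} applied to the branch data $\Lambda=\{(I_2/g)^{I_1},\,(I_1/g)^{I_2},\,(\alpha,1,\ldots,1)\}$ of degree $d=I_1I_2/g$, which (as you compute) lies on the borderline $v(\Lambda)=2d-2$. The only cosmetic difference is that the paper treats $I_2=1$ by a direct calculation and proves necessity by a short contradiction (if $I_2\geq 2$ and $I_2\mid I_1$ then $f=\exp(\int\omega)$ is already rational of degree $I_1$, and inspecting $f'$ forces $\alpha\leq I_1$, impossible), whereas you invoke the ``only if'' direction of Boccara uniformly; the content is the same.
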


\begin{proof}
\textbf{Sufficiency}\par
If $I_{2}=1$, a direct calculation verifies the result.\par
For $I_{2}\geq2$ and $I_{2}\nmid I_{1}$, let $(I_{1},I_{2})=m\geq 1$. By \textbf{Theorem} \ref{M-F-T-3}, there exists a meromorphic function $f:\mathbb{C}\cup\{\infty\}\rightarrow \mathbb{C}\cup\{\infty\}$ of degree $\frac{I_{1}I_{2}}{m}$ and with branch data $$\Lambda=\{(\underbrace{\frac{I_{2}}{m},\ldots,\frac{I_{2}}{m}}_{I_{1}}),(\underbrace{\frac{I_{1}}{m},\ldots,\frac{I_{1}}{m}}_{I_{2}}),(\alpha,1,\ldots,1)\}.$$
Without loss of generality, we can express $f$ as
$$f(z)=\frac{C\prod_{k=1}^{I_{1}}(z-a_{k})^{\frac{I_{2}}{m}}}{\prod_{l=1}^{I_{2}}(z-b_{l})^{\frac{I_{1}}{m}}},$$
where $a_{1},\ldots,a_{I_{1}},b_{1},\ldots,b_{I_{2}}\in \mathbb{C}\setminus\{0\}$ are distinct complex numbers and $C\neq0$ is a constant.
Then
$$\frac{df}{f}=\frac{Bz^{\alpha-1}}{\prod_{k=1}^{I_{1}}(z-a_{k})\prod_{l=1}^{I_{2}}(z-b_{l})}dz=\frac{I_{2}}{m}(\sum_{k=1}^{I_{1}}\frac{1}{z-a_{k}}-\frac{I_{1}}{I_{2}}\sum_{l=1}^{I_{2}}\frac{1}{z-b_{l}})dz$$
where $B\neq0$ is a constant, is the desired meromorphic 1-form.\par
\textbf{Necessity}\par
Suppose there exists a meromorphic 1-form on $S^{2}=\mathbb{C}\cup\{\infty\}$, defined as follows
 $$\omega=\frac{Bz^{\alpha-1}}{\prod_{k=1}^{I_{1}}(z-a_{k})\prod_{l=1}^{I_{2}}(z-b_{l})}dz=(\sum_{k=1}^{I_{1}}\frac{1}{z-a_{k}}-\frac{I_{1}}{I_{2}}\sum_{l=1}^{I_{2}}\frac{1}{z-b_{l}})dz$$
where $B\in\mathbb{C}\setminus\{0\}$ is a constant and $a_{1},\ldots,a_{I_{1}},b_{1},\ldots,b_{I_{2}}\in \mathbb{C}\setminus\{0\}$ are distinct complex numbers.\par
If $I_{2}=1$, there is no further proof required. Consequently, we only need to consider the case (2).\par
Suppose $I_{2}\geq2$ and $I_{2}|I_{1}$, then the function
$$f(z)=exp(\int \omega)=C\frac{\prod_{k=1}^{I_{1}}(z-a_{k})}{\prod_{l=1}^{I_{2}}(z-b_{l})^{\frac{I_{1}}{I_{2}}}}$$
is a meromorphic function with degree $I_{1}$, where $C\neq0$ is a constant. The derivative of $f$ is
$$f'(z)=\frac{CBz^{\alpha-1}}{\prod_{l=1}^{I_{2}}(z-b_{l})^{\frac{I_{1}}{I_{2}}+1}}.$$
This implies $\alpha\leq I_{1}$. Consequently, $I_{1}=\alpha$ and $I_{2}=1$. It is a contradiction. So $I_{2}\nmid I_{1}$.
\end{proof}

 By \textbf{Proposition} \ref{S-1-P-1} and \textbf{Theorem} \ref{from 1-form to HCMU}, there exists a non-CSC HCMU metric $g$ on $S^{2}_{\{\alpha\}}$ such that the singularity is a saddle point of the Gaussian curvature $K$ of $g$. Furthermore, $K$ possesses $I_{1}$ maximum points and $I_{2}$ minimum points, where $I_{1}=\alpha,I_{2}=1$ or $I_{2}\geq2$ and $I_{2}\nmid I_{1}$, with $I_{1}> I_{2}$ and their sum satisfying $I_{1}+I_{2}=\alpha+1$. Since the number of saddle and extremal points of $K$ exceeds $3$, by virtue of \textbf{Theorem} \ref{WW16}, it is straightforward to ascertain that $g$ is uniquely determined by the total area, $\frac{I_{1}}{I_{2}}$, $\alpha$ and the initial value $K(z_{0})=K_{0}$. \par

(B) When $\alpha\notin \mathbb{Z}^{+}$, a non-CSC HCMU metric $g$ can exist on $S^{2}_{\{\alpha\}}$ under the following two conditions. \par
(B-1) The singularity is the maximum point of the Gaussian curvature $K$ of $g$. \par
First we obtain $\alpha>1$.  Denote the maximum and minimum values of $K$ by $K_{1}$ and $K_{2}$ respectively. Set
 $$\sigma=-\frac{1}{(K_{1}-K_{2})(K_{2}+2K_{1})},~~~\lambda=-\frac{2K_{1}+K_{2}}{2K_{2}+K_{1}}.$$
 Suppose the character 1-form of $g$ is $\omega$. Since $g$ has a single singularity and the singularity is a maximum of $K$, $\omega$ has two simple poles. Regard $S^{2}$ as $\mathbb{C}\cup\{\infty\}$. We can assume that $0$ and $\infty$ are simple poles of $\omega$, with $\infty$ being the conical singularity. Then $Res_{0}(\omega)=\sigma\lambda$ and $Res_{\infty}(\omega)=\alpha\sigma$. From this, we deduce that $\sigma\lambda+\alpha\sigma=\sigma(\lambda+\alpha)=0$, which implies $\alpha=-\lambda=\frac{2K_{1}+K_{2}}{2K_{2}+K_{1}}$. Thus
 $$\omega=-\frac{\sigma\alpha}{z}dz.$$
Conversely, by \textbf{Theorem} \ref{from 1-form to HCMU}, there  exists a non-CSC HCMU metric $g$ on $S^{2}_{\{\alpha\}}$ such that the singularity is the maximum point of the  Gaussian curvature $K$ of $g$. Furthermore, by \textbf{Theorem} \ref{WW16}, one can easily check that $g$ is uniquely determined by the total area and $\alpha$, and must be rotationally symmetric.

(B-2) The singularity is the minimum point of the Gaussian curvature $K$ of $g$. \par
First we obtain $\alpha<1$.  Denote the maximum and minimum values of $K$ by $K_{1}$ and $K_{2}$ respectively. Set
 $$\sigma=-\frac{1}{(K_{1}-K_{2})(K_{2}+2K_{1})},~~~\lambda=-\frac{2K_{1}+K_{2}}{2K_{2}+K_{1}}.$$
 Suppose the character 1-form of $g$ is $\omega$. Since $g$ has a single singularity and the singularity is a minimum of $K$, $\omega$ has two simple poles.  Regard $S^{2}$ as $\mathbb{C}\cup\{\infty\}$. We can assume that $0$ and $\infty$ are simple poles of $\omega$, with $\infty$ being the conical singularity. Then $Res_{0}(\omega)=\sigma$ and $Res_{\infty}(\omega)=\alpha\sigma\lambda$. From this, we deduce that $\sigma+\alpha\sigma\lambda=\sigma(1+\alpha\lambda)=0$, which implies $\alpha=-1/\lambda=\frac{2K_{2}+K_{1}}{2K_{1}+K_{2}}$. Thus
 $$\omega=-\frac{\sigma}{z}dz.$$
Conversely, by \textbf{Theorem} \ref{from 1-form to HCMU}, there  exists a non-CSC HCMU metric $g$ on $S^{2}_{\{\alpha\}}$ such that the singularity is the minimum point of the  Gaussian curvature $K$ of $g$. Furthermore, by \textbf{Theorem} \ref{WW16}, one can easily check that $g$ is uniquely determined by the total area and $\alpha$, and must be rotationally symmetric.

\section{Classification of non-CSC HCMU metrics on $S^{2}_{\{\alpha,\beta\}}$}
In this section, we will give the proof the proof \textbf{Theorem} \ref{mainth2}. Our strategy is the same as the proof of \textbf{Theorem} \ref{mainth1}.\par
By \textbf{Theorem} \ref{Chen99}, if $\alpha=\beta=0$, there is no non-CSC HCMU metric on $S^{2}_{\{\alpha,\beta\}}$. Consequently, at most one of the parameters $\alpha$ and $\beta$ can be equal to $0$.\par

Firstly, if there is a zero in $\alpha,\beta$, without loss of generality, we can suppose $\beta=0$. Then, a non-CSC HCMU metric $g$ on $S^{2}_{\{\alpha,\beta\}}$ exists if and only if $g$ possesses both a conical singularity and a cusp singularity. It's worth noting that G.F. Wang and X.H. Zhu have classified the case where $\alpha<\frac{1}{2}$, as stated in \textbf{Theorem} \ref{W-Z}. Now, we proceed to classify non-CSC HCMU metrics for all $0<\alpha\neq1$.\par
(A) If $2\leq\alpha\in \mathbb{Z}$, a non-CSC HCMU metric $g$ on $S^{2}_{\{\alpha,\beta\}}$ exists if and only if one of the following two cases holds.\par
 (A-1) The singularity of conical angle $2\pi\alpha$ is the maximum point of the Gaussian curvature $K$ of $g$.\par
Obviously, in this scenario, $K$ precisely possesses exactly two extremal points. Regard $S^{2}$ as $\mathbb{C}\cup\{\infty\}$. We can assume that the conical and cusp singularities of $g$ are $0$ and $\infty$ respectively. Denote the minimum of $K$ by $\mu<0$. Then the character 1-form of $g$ is $\omega=-\frac{\alpha}{3\mu^{2}}\cdot \frac{1}{z}dz$.\par
Conversely, by \textbf{Theorem} \ref{from 1-form to HCMU1}, there  exists a non-CSC HCMU metric $g$ on $S^{2}_{\{\alpha,\beta\}}$ such that the singularity of singular angle $2\pi\alpha$ is the maximum point of the Gaussian curvature $K$ of $g$ and another singularity is the minimum point of $K$. Furthermore, by \textbf{Theorem} \ref{WW16}, one can easily check that $g$ is uniquely determined by the total area and $\alpha$, and must be rotationally symmetric.\par
(A-2) The singularity of conical angle $2\pi\alpha$ is the saddle point of the Gaussian curvature $K$ of $g$.\par
In this case, $K$ possesses exactly $\alpha$ smooth maximum points. Regard $S^{2}$ as $\mathbb{C}\cup\{\infty\}$. We can assume that the conical and cusp singularities of $g$ are $0$ and $\infty$ respectively, and the smooth maximum points of $K$ are $a_{1},\ldots,a_{\alpha}$. Denote the minimum of $K$ by $\mu<0$. Then  the character 1-form of $g$ can be written as
$$\omega=-\frac{1}{3\mu^{2}}(\sum_{k=1}^{\alpha} \frac{1}{z-a_{k}})dz=\frac{\widetilde{B}z^{\alpha-1}}{\prod_{k=1}^{\alpha}(z-a_{k})}dz,$$
where $\widetilde{B}\neq0$ is a constant.\par
Conversely, it is straightforward to establish the following proposition.

\begin{proposition}\label{S-2-P-0}
Let $\alpha\geq2$ be an integer. For any real number $\mu<0$, there exists a meromorphic 1-form $\omega$ on $S^{2}=\mathbb{C}\cup\{\infty\}$, defined by the form
\begin{equation}
\omega=-\frac{1}{3\mu^{2}}(\sum_{k=1}^{\alpha} \frac{1}{z-a_{k}})dz=\frac{Bz^{\alpha-1}}{\prod_{k=1}^{\alpha}(z-a_{k})}dz,
\end{equation}
 where $B\in\mathbb{C}\setminus\{0\}$ is a constant and $a_{1},\ldots,a_{\alpha}\in \mathbb{C}\setminus\{0\}$ are distinct complex numbers.
\end{proposition}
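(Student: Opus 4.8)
The plan is to prove the proposition by an explicit construction, since it is a pure existence statement. In contrast with Proposition \ref{S-1-P-1}, where a block of maximum‑type residues competed against a block of minimum‑type residues and thereby produced the divisibility obstruction, here all $\alpha$ finite poles carry the \emph{identical} residue $-1/(3\mu^2)$, so there is no arithmetic constraint to satisfy and a single explicit configuration of the points $a_1,\ldots,a_\alpha$ will do. Concretely, I would produce $\omega$ as a rescaled logarithmic derivative.

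First I would note that the two displayed formulas for $\omega$ agree exactly when the partial‑fraction expansion $\dfrac{z^{\alpha-1}}{\prod_{k=1}^{\alpha}(z-a_k)}=\dsum_{k=1}^{\alpha}\dfrac{c_k}{z-a_k}$ has all of its residues $c_k=\dfrac{a_k^{\alpha-1}}{\prod_{j\neq k}(a_k-a_j)}$ equal: the identity $\dfrac{Bz^{\alpha-1}}{\prod_k(z-a_k)}=-\dfrac{1}{3\mu^2}\dsum_k\dfrac{1}{z-a_k}$ holds if and only if $Bc_k=-\dfrac{1}{3\mu^2}$ for every $k$. Then I would choose $a_1,\ldots,a_\alpha$ to be the $\alpha$‑th roots of unity, $a_k=e^{2\pi\sqrt{-1}\,k/\alpha}$; these are distinct and nonzero because $\alpha\geq2$, and $\prod_{k=1}^{\alpha}(z-a_k)=z^\alpha-1$. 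Differentiating this product at $z=a_k$ gives $\prod_{j\neq k}(a_k-a_j)=\alpha a_k^{\alpha-1}$, hence $c_k=1/\alpha$ for all $k$; equivalently, $\dfrac{d}{dz}\log(z^\alpha-1)=\dfrac{\alpha z^{\alpha-1}}{z^\alpha-1}=\dsum_{k=1}^{\alpha}\dfrac{1}{z-a_k}$.

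Finally, setting $B=-\dfrac{\alpha}{3\mu^2}$, which is nonzero since $\mu<0$, yields $\dfrac{Bz^{\alpha-1}}{z^\alpha-1}\,dz=-\dfrac{1}{3\mu^2}\bigl(\dsum_{k=1}^{\alpha}\dfrac{1}{z-a_k}\bigr)\,dz$, so this $\omega$ has exactly the required form; moreover its residues at the $a_k$ and at $\infty$ are real, and $\omega+\bar\omega$ is automatically exact off the poles, as recalled just before \textbf{Theorem} \ref{W-M}. There is genuinely no obstacle to overcome here — the only things to verify are that the chosen $a_k$ are distinct and nonzero and that $B\neq0$, both immediate — which is why the paper states it is straightforward. (Replacing the roots of unity by the roots of $z^\alpha-R$ for any $R\in\mathbb{C}\setminus\{0\}$ works identically and gives some freedom in the positions of the maximum points, but one configuration suffices for existence.) Feeding this $\omega$ into \textbf{Theorem} \ref{from 1-form to HCMU1} then produces the non‑CSC HCMU metric described in case (A-2).
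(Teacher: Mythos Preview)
Your proposal is correct: the paper omits the proof entirely, simply declaring the proposition ``straightforward,'' and your explicit choice of the $a_k$ as the $\alpha$-th roots of unity together with $B=-\alpha/(3\mu^2)$ is exactly the kind of direct verification that phrase invites. The residue computation $c_k=1/\alpha$ via $\prod_{j\neq k}(a_k-a_j)=\alpha a_k^{\alpha-1}$ is clean and complete, and your remarks tying $\omega$ back into \textbf{Theorem}~\ref{from 1-form to HCMU1} are apt.
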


 By \textbf{Proposition} \ref{S-2-P-0} and \textbf{Theorem} \ref{from 1-form to HCMU1}, there  exists a non-CSC HCMU metric $g$ on $S^{2}_{\{\alpha,\beta\}}$ such that the singularity of singular angle $2\pi\alpha$ is the saddle point of the Gaussian curvature $K$ of $g$ and another singularity is the minimum point of $K$. Additionally, by \textbf{Theorem} \ref{WW16}, one can easily check that $g$ is uniquely determined by the total area and $\alpha$, and the initial value $K(z_{0})=K_{0}$.\par
(B) If $\alpha\notin \mathbb{Z}$, a non-CSC HCMU metric $g$ on $S^{2}_{\{\alpha,\beta\}}$ exists if and only if the Gaussian curvature $K$ of $g$ has both a maximum and a minimum points.\par
 Regard $S^{2}$ as $\mathbb{C}\cup\{\infty\}$. We can assume that the conical and cusp singularities of $g$ are $0$ and $\infty$ respectively. Denote the minimum of $K$ by $\mu<0$. Then the character 1-form of $g$ is $\omega=-\frac{\alpha}{3\mu^{2}}\cdot \frac{1}{z}dz$.\par
Conversely, by \textbf{Theorem} \ref{from 1-form to HCMU1}, there  exists a non-CSC HCMU metric $g$ on $S^{2}_{\{\alpha,\beta\}}$ such that the singularity of singular angle $2\pi\alpha$ is the maximum point of the  Gaussian curvature $K$ of $g$ and another singularity is the minimum point of $K$. Furthermore, by \textbf{Theorem} \ref{WW16}, one can easily check that $g$ is uniquely determined by the total area and $\alpha$, and  must be rotationally symmetric.\par

Secondly, if $\alpha,\beta\notin \mathbb{Z}$, a non-CSC HCMU metric $g$ on $S^{2}_{\{\alpha,\beta\}}$ exists if and only if $\alpha\neq\beta$. Furthermore, $g$ must be a football. This classification is due to X.X. Chen,  as stated in \textbf{Theorem} \ref{Chen00}.\par

Thirdly, if there is only one positive integer in $\alpha,\beta$, without loss of generality, we can assume that $\alpha\in \mathbb{Z}^{+}$ and $0<\beta\notin \mathbb{Z}$. Then, a non-CSC HCMU metric $g$ on $S^{2}_{\{\alpha,\beta\}}$ exits if and only if  one of the following two conditions holds.\par

(A) If the singularity of conical angle $2\pi\alpha$ is an extremal point of the Gaussian curvature $K$ of $g$, it leads to two distinct scenarios. \par
(A-1) If the singularity of conical angle $2\pi\alpha$ is the maximum point of the Gaussian curvature $K$ of $g$,  then another singularity is the minimum point of $K$ and $\alpha>\beta$. Moreover, $g$ must be a football.\par
(A-2) If the singularity of conical angle $2\pi\alpha$ is the minimum point of the Gaussian curvature $K$ of $g$,  then another singularity is the maximum point of $K$ and $\alpha<\beta$. Moreover, $g$ must be a football.\par

(B) If the singularity of conical angle $2\pi\alpha$ is the saddle point of the Gaussian curvature $K$ of $g$, it leads to two distinct scenarios. \par

(B-1) If the singularity of conical angle $2\pi\beta$ is a maximum point of the Gaussian curvature $K$ of $g$,  then $K$ has exactly $\alpha$ smooth extremal points. Suppose $K$ has $I_{1}\geq0$ smooth maximum points and $I_{2}\geq1$ smooth minimum points, then $I_{1}+I_{2}=\alpha$.  Denote the maximum and minimum values of $K$ by $K_{1}$ and $K_{2}$ respectively. Set
 $$\sigma=-\frac{1}{(K_{1}-K_{2})(K_{2}+2K_{1})},~~~\lambda=-\frac{2K_{1}+K_{2}}{2K_{2}+K_{1}}.$$
 Suppose the character 1-form of $g$ is $\omega$. Since $g$ has two singularities and one is the saddle point of $K$ and another is a maximum point of $K$, $\omega$ has a zero of order $\alpha-1$, $\alpha$ simple poles which are smooth extremal points of $K$ and a simple pole which is the singularity of singular angle $2\pi\beta$.  Regard $S^{2}$ as $\mathbb{C}\cup\{\infty\}$. We can assume that $0$ is the zero of $\omega$, $a_{1},\ldots,a_{I_{1}},a_{I_{1}+1}$ are poles of $\omega$ at which residues are negative and  $b_{1},\ldots,b_{I_{2}}$ are poles of $\omega$ at which residues are positive. Then $Res_{a_{k}}(\omega)=\sigma,k=1,\ldots,I_{1},Res_{a_{I_{1}+1}}(\omega)=\sigma\beta~ \text{and}~ Res_{b_{l}}(\omega)=\sigma\lambda,l=1,\ldots,I_{2}$. From this, we derive $I_{1}\sigma+\sigma\beta+I_{2}\sigma\lambda=\sigma(I_{1}+\beta+I_{2}\lambda)=0$, which implies $\lambda=-\frac{I_{1}+\beta}{I_{2}}$. By $\lambda=-\frac{2K_{1}+K_{2}}{2K_{2}+K_{1}}=-\frac{I_{1}+\beta}{I_{2}}<-1$, we obtain $I_{1}+\beta> I_{2}$. Thus there is a nonzero $\widetilde{B}$ such that
 $$\omega=\frac{\widetilde{B}z^{\alpha-1}}{\prod_{k=1}^{I_{1}+1}(z-a_{k})\prod_{l=1}^{I_{2}}(z-b_{l})}dz=\sigma(\sum_{k=1}^{I_{1}}\frac{1}{z-a_{k}}+\frac{\beta}{z-a_{I_{1}+1}}-\frac{I_{1}+\beta}{I_{2}}\sum_{l=1}^{I_{2}}\frac{1}{z-b_{l}})dz.$$
Similar to the proof of \textbf{Proposition} \ref{S-2-P-2}, one can establish the following proposition.
\begin{proposition}\label{S-2-P-4}
Let $\alpha\geq 2,I_{1}\geq0$ and $I_{2}\geq1$ be integers such that $I_{1}+I_{2}=\alpha$. Additionally, let $0<\beta\notin\mathbb{Z}$ satisfy the condition $I_{1}+\beta>I_{2}$. Then there exists a meromorphic 1-form $\omega$ on $S^{2}=\mathbb{C}\cup\{\infty\}$, defined by the form
\begin{equation*}
\omega=\frac{Bz^{\alpha-1}}{(z-a_{I_{1}+1})\prod_{k=1}^{I_{1}}(z-a_{k})\prod_{l=1}^{I_{2}}(z-b_{l})}dz=(\sum_{k=1}^{I_{1}}\frac{1}{z-a_{k}}+\frac{\beta}{z-a_{I_{1}+1}}-\frac{I_{1}+\beta}{I_{2}}\sum_{l=1}^{I_{2}}\frac{1}{z-b_{l}})dz,
\end{equation*}
where $B\in\mathbb{C}\setminus\{0\}$ is a constant and $a_{1},\ldots,a_{I_{1}},a_{I_{1}+1},b_{1},\ldots,b_{I_{2}}\in \mathbb{C}\setminus\{0,1\}$ are distinct complex numbers.
\end{proposition}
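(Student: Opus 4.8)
The statement to be proved, Proposition~\ref{S-2-P-4}, is an existence result for a meromorphic 1-form on $S^2$ with prescribed simple poles and residues: at $a_1,\ldots,a_{I_1}$ the residue is $1$, at $a_{I_1+1}$ the residue is $\beta$, at $b_1,\ldots,b_{I_2}$ the residue is $-(I_1+\beta)/I_2$, together with a zero of order $\alpha-1$ at the origin (so that the numerator is exactly $z^{\alpha-1}$). Since the residues sum to $I_1+\beta-(I_1+\beta)=0$, the residue theorem imposes no obstruction, and a 1-form with these poles and residues on $S^2=\mathbb{C}\cup\{\infty\}$ is automatically of the displayed rational shape; the only nontrivial content is that, for a suitable choice of the poles, the numerator collapses to a pure power $z^{\alpha-1}$ rather than a general polynomial of degree $\alpha-1$. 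My plan is to reduce this to the existence of a branched covering / rational function via a branch-data argument, exactly in the spirit of the proof of Proposition~\ref{S-1-P-1}, but now keeping track of the extra pole at $a_{I_1+1}$ which carries the \emph{non-integer} residue $\beta$.

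\textbf{Key steps.} First I would observe that, because $\beta\notin\mathbb{Z}$, the pole $a_{I_1+1}$ cannot come from a rational function via $df/f$; instead one should look for a function of the form $f(z)=C\,(z-a_{I_1+1})^{\beta}\prod_k (z-a_k)/\prod_l (z-b_l)^{(I_1+\beta)/I_2}$ only formally, and the clean way to get genuine existence is to split off the $a_{I_1+1}$-term and handle the remaining integer-residue part by a covering argument. Concretely: set $m=\mathrm{GCD}(I_1,I_2)$ in the case $I_2 \nmid I_1$, or argue directly when $I_2\mid I_1$; the inequality $I_1+\beta>I_2$ (equivalently $\lambda<-1$) is what guarantees the relevant partitions of the appropriate degree $d$ are genuine partitions (all parts positive) and that the length count matches $\alpha = I_1+I_2$ simple preimages over one branch value. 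Then invoke Theorem~\ref{M-F-T-3} (Boccara) or Theorem~\ref{M-F-T} (Song--Xu) to produce a rational function $g$ of degree $d$ with branch data prescribing: a point with partition coming from the $a_k,b_l$ data, a second point with the complementary partition, and a third point (the origin) with partition $(\alpha,1,\ldots,1)$ forcing $\mathrm{ord}_0 = \alpha-1$. Finally I would build $\omega$ as $\frac{dg}{g}$ plus $\beta$ times a 1-form with a single simple pole pair (a $\frac{dz}{z-a_{I_1+1}}$-type term), check that the combination still has numerator exactly $z^{\alpha-1}$ by matching the zero at $0$, and verify the residues and the distinctness/nonvanishing conditions on $a_k,b_l,a_{I_1+1}$; distinctness from $0$ and $1$ can be arranged by a Möbius normalization, as the statement already presumes.

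\textbf{Alternative, and the main obstacle.} Since the text says ``similar to the proof of Proposition~\ref{S-2-P-2}'', the intended route is presumably to mimic that proof verbatim, treating the $\beta$-pole as an additional prescribed simple pole with a non-integer residue and showing the corresponding 1-form always exists under $I_1+\beta>I_2$. The main obstacle I anticipate is bookkeeping rather than conceptual: one must ensure that when the branch-covering machinery is applied, the partition sizes are consistent with $d$ and the Riemann--Hurwitz count, and that the extra $a_{I_1+1}$ pole does \emph{not} create a spurious zero in the numerator or fail the order-$\alpha-1$ condition at the origin. In contrast to Proposition~\ref{S-1-P-1}, here there is no ``only if'' direction and no number-theoretic divisibility obstruction --- the single inequality $I_1+\beta>I_2$ suffices --- because the irrational residue $\beta$ means the associated local factor $(z-a_{I_1+1})^{\beta}$ is never single-valued, so the rigidity that forced $I_1=\alpha$, $I_2=1$ in the integer case simply does not arise. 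Thus I expect the proof to be short: reduce to Theorem~\ref{M-F-T-3}, peel off the $\beta$-term, and verify the residue and genericity conditions.
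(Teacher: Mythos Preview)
Your proposal contains a genuine gap in the ``split off the $a_{I_1+1}$-term'' strategy. If $g$ is a rational function whose logarithmic derivative has simple poles only at $a_1,\ldots,a_{I_1},b_1,\ldots,b_{I_2}$ (so $\alpha=I_1+I_2$ poles in all), then the numerator of $dg/g$ has degree at most $\alpha-2$, and hence $dg/g$ cannot have a zero of order $\alpha-1$ at the origin no matter what branch data you impose. Adding $\beta\,\frac{dz}{z-a_{I_1+1}}$ afterward introduces one more pole but produces a numerator $P(z)(z-a_{I_1+1})+\beta\,Q(z)$ with $Q=\prod_k(z-a_k)\prod_l(z-b_l)$; there is no mechanism in your plan to force this new numerator to be a pure power $z^{\alpha-1}$, and generically it will not be. The step ``check that the combination still has numerator exactly $z^{\alpha-1}$'' is exactly the whole problem, and it is not resolved by the splitting.

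The adaptation the paper has in mind (paralleling Proposition~\ref{S-2-P-2}) keeps $a_{I_1+1}$ \emph{inside} the covering rather than peeling it off. Since $\beta\notin\mathbb{Z}$ one cannot take the residues of $\omega$ as zero/pole orders directly, but one rescales: for $\beta=p/q$ in lowest terms, $I_2q\,\omega$ has integer residues $I_2q$ at each $a_k$, $I_2p$ at $a_{I_1+1}$, and $-(qI_1+p)$ at each $b_l$, so one seeks a rational $f$ of degree $d=I_2(qI_1+p)$ with branch data
\[
\Lambda=\bigl\{(I_2p,\underbrace{I_2q,\ldots,I_2q}_{I_1}),\ (\underbrace{qI_1+p,\ldots,qI_1+p}_{I_2}),\ (\alpha,1,\ldots,1)\bigr\}.
\]
Here $v(\Lambda)=2d-2$, $\gcd$ of the first two partitions is $\gcd(I_2,qI_1+p)$, and $d/\gcd=\mathrm{lcm}(I_2,qI_1+p)\ge qI_1+p>qI_2\ge I_1+I_2=\alpha>\alpha-1$ (using $I_1+\beta>I_2$ and $q\ge2$), so Theorem~\ref{M-F-T-3} produces $f$ and $\omega=\tfrac{1}{I_2q}\,df/f$ is the desired form. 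This is the literal analogue of case~(3) of Proposition~\ref{S-2-P-2}, with the scaling replacing the role of $m=\gcd(I_1+\beta,I_2)$. (For irrational $\beta$ one needs an additional continuity or specialization argument, which the paper also leaves implicit.) Your observation that no ``only if'' obstruction appears because $(z-a_{I_1+1})^\beta$ is never single-valued is correct, but it does not help with the construction; the construction must still go through a genuine covering, just one built from a scalar multiple of $\omega$.
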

By \textbf{Proposition} \ref{S-2-P-4} and \textbf{Theorem} \ref{from 1-form to HCMU}, there  exists a non-CSC HCMU metric $g$ on $S^{2}_{\{\alpha,\beta\}}$ such that the singularity of singular angle $2\pi\alpha$ is the saddle points of the  Gaussian curvature $K$ and the singularity of singular angle $2\pi\beta$ is a maximum point of $K$.\par
(B-2)  If the  singularity of conical angle $2\pi\beta$ is the minimum point of the Gaussian curvature $K$ of $g$,  then  $K$ has exactly $\alpha$ smooth extremal points. Suppose $K$ has $I_{1}\geq1$ smooth maximum points and $I_{2}\geq0$ smooth minimum points, then $I_{1}+I_{2}=\alpha$. Denote the maximum and minimum of $K$ by $K_{1}$ and $K_{2}$ respectively. Set
 $$\sigma=-\frac{1}{(K_{1}-K_{2})(K_{2}+2K_{1})},~~~\lambda=-\frac{2K_{1}+K_{2}}{2K_{2}+K_{1}}.$$
 Suppose the character 1-form of $g$ is $\omega$. Since $g$ has two singularities and one is the saddle point of $K$ and another is a maximum point of $K$, $\omega$ has a zero of order $\alpha-1$, $\alpha$ simple poles which are smooth extremal points of $K$ and a simple pole which is the singularity of singular angle $2\pi\beta$.  Regard $S^{2}$ as $\mathbb{C}\cup\{\infty\}$. We can assume that $0$ is the zero of $\omega$, $a_{1},\ldots,a_{I_{1}}$ are poles of $\omega$ at which residues are negative and $b_{1},\ldots,b_{I_{2}},b_{I_{2}+1}$ are poles of $\omega$ at which residues are positive. Then $Res_{a_{k}}(\omega)=\sigma,k=1,\ldots,I_{1}~ \text{and}~ Res_{b_{l}}(\omega)=\sigma\lambda,l=1,\ldots,I_{2},Res_{b_{I_{2}+1}}(\omega)=\sigma\lambda\beta$. From these, we derive $I_{1}\sigma+\sigma\lambda\beta+I_{2}\sigma\lambda=\sigma(I_{1}+\lambda\beta+I_{2}\lambda)=0$, which implies $\lambda=-\frac{I_{1}}{I_{2}+\beta}$. By $\lambda=-\frac{2K_{1}+K_{2}}{2K_{2}+K_{1}}=-\frac{I_{1}}{I_{2}+\beta}<-1$, we obtain $I_{1}> I_{2}+\beta$. Consequently, there exists a nonzero $\widetilde{B}$ such that
 $$\omega=\frac{\widetilde{B}z^{\alpha-1}}{\prod_{k=1}^{I_{1}}(z-a_{k})\prod_{l=1}^{I_{2}+1}(z-b_{l})}dz=\sigma(\sum_{k=1}^{I_{1}}\frac{1}{z-a_{k}}-\frac{\frac{I_{1}\beta}{I_{2}+\beta}}{z-b_{I_{2}+1}}-\frac{I_{1}}{I_{2}+\beta}\sum_{l=1}^{I_{2}}\frac{1}{z-b_{l}})dz.$$
Similar to the proof of \textbf{Proposition} \ref{S-2-P-3}, we can establish the following proposition.

\begin{proposition}\label{S-2-P-5}
Let $\alpha\geq 2,I_{1}\geq1$ and $I_{2}\geq0$ be integers such that $I_{1}+I_{2}=\alpha$. Additionally, let $0<\beta\notin\mathbb{Z}$ satisfy $I_{1}>I_{2}+\beta$. Then there exists a meromorphic 1-form $\omega$ on $S^{2}=\mathbb{C}\cup\{\infty\}$ defined by the form
\begin{equation*}
\omega=\frac{Bz^{\alpha-1}}{\prod_{k=1}^{I_{1}}(z-a_{k})\prod_{l=1}^{I_{2}+1}(z-b_{l})}dz=(\sum_{k=1}^{I_{1}}\frac{1}{z-a_{k}}-\frac{\frac{I_{1}\beta}{I_{2}+\beta}}{z-b_{I_{2}+1}}-\frac{I_{1}}{I_{2}+\beta}\sum_{l=1}^{I_{2}}\frac{1}{z-b_{l}})dz,
\end{equation*}
where $B\in\mathbb{C}\setminus\{0\}$ is a constant and $a_{1},\ldots,a_{I_{1}},a_{I_{1}},b_{1},\ldots,b_{I_{2}+1}\in \mathbb{C}\setminus\{0,1\}$ are distinct complex numbers.
\end{proposition}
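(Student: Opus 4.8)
The plan is to follow the same route as the proof of \textbf{Proposition} \ref{S-2-P-3}. First I would record that the asserted shape of $\omega$ is equivalent to a single polynomial identity. Abbreviating $b:=b_{I_2+1}$, $s:=I_1/(I_2+\beta)$, $P(z):=\prod_{k=1}^{I_1}(z-a_k)$ and $Q(z):=\prod_{l=1}^{I_2}(z-b_l)$, the partial–fraction form of $\omega$ is $\bigl(\tfrac{P'}{P}-s\tfrac{Q'}{Q}-\tfrac{s\beta}{z-b}\bigr)dz$, and putting it over the common denominator $P\,Q\,(z-b)=\prod_{k}(z-a_k)\prod_{l=1}^{I_2+1}(z-b_l)$ shows that the proposition is exactly the requirement
\[
(z-b)\bigl(P'Q-sPQ'\bigr)-s\beta\,PQ=Bz^{\alpha-1},\qquad B\neq0 .
\]
Since the prescribed residues sum to zero, the left–hand side automatically has degree $\le\alpha-1$, so this is the system of $\alpha-1$ equations ``the coefficients of $z^{0},\dots,z^{\alpha-2}$ vanish''; geometrically it says that $\omega$ has a single zero, of order $\alpha-1$, at the origin, as it must, the origin being the saddle of $K$ that carries the conical angle $2\pi\alpha$.

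Next I would construct a solution of this identity by solving it coefficient by coefficient, just as for $\beta\in\mathbb Z$: one keeps $b$ together with the leading subordinate coefficients of $P$ and $Q$ as free parameters and runs the resulting (bilinear, essentially triangular) recursion, which expresses each remaining coefficient of $P$ and $Q$ in terms of the higher ones, and finally reads off $B$ from the $z^{\alpha-1}$ coefficient — the special sub-case $I_2=0$ already collapses to the elementary first–order relation $(z-b)P'-I_1P=Bz^{\alpha-1}$, which is solvable for every admissible choice of the free data. The point at which the integer version \textbf{Proposition} \ref{S-2-P-3} meets the arithmetic of $(I_1,I_2,\beta)$, and thereby acquires its divisibility conditions, is the step where a multiplier occurring in this recursion could vanish; in the present situation that multiplier is a rational combination of $s\beta=I_1\beta/(I_2+\beta)$ with integers, hence is nonzero precisely because $\beta\notin\mathbb Z$, so the recursion runs through unconditionally — the standing hypothesis $I_1>I_2+\beta$ (equivalently $s>1$) being what keeps the signs and the bookkeeping in order. (One can also phrase this in the style of \textbf{Theorem} \ref{M-F-T}, as a realizability statement for the auxiliary combinatorial data once the contribution of the irrational–residue pole $b$ has been separated off.)

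Finally, the free parameters give a positive–dimensional family of solutions, and the requirements that the $I_1+I_2+1$ points $a_1,\dots,a_{I_1},b_1,\dots,b_{I_2+1}$ be pairwise distinct, lie in $\mathbb C\setminus\{0,1\}$, and satisfy $B\neq0$ are each the complement of a proper algebraic subset, so a generic member of the family meets all of them. I expect the middle step to be the real obstacle: proving that the coefficient recursion is consistent — i.e.\ that the solution variety is nonempty — and checking that it is not entirely contained in the degenerate locus where two poles collide or a pole lands on $0$ or $1$; this is where the hypothesis $I_1>I_2+\beta$ and the irrationality of $\beta$ have to be used in an essential way, exactly as in \textbf{Proposition} \ref{S-2-P-3}.
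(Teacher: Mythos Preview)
Your reading of \textbf{Proposition}~\ref{S-2-P-3} is off, and this undermines the analogy you draw. That proof does not run any coefficient recursion: outside the trivial case $I_2=0$ it builds the required $1$-form as a scalar multiple of $df/f$ for an explicit rational map $f:\overline{\mathbb C}\to\overline{\mathbb C}$ supplied by the branch-data realizability result \textbf{Theorem}~\ref{M-F-T-3}. The divisibility and GCD conditions appearing there are exactly the hypotheses of \textbf{Theorem}~\ref{M-F-T-3}, not places where ``a multiplier occurring in a recursion could vanish.'' In the present setting, once $I_2\ge1$ and $\beta\notin\mathbb Z$, the residues $-I_1/(I_2+\beta)$ at $b_1,\dots,b_{I_2}$ and $-I_1\beta/(I_2+\beta)$ at $b_{I_2+1}$ are non-integers, so no rational $f$ has $\omega=df/f$; your parenthetical idea of ``separating off the irrational-residue pole $b$'' and invoking \textbf{Theorem}~\ref{M-F-T} fails for the same reason, since \emph{all} of $b_1,\dots,b_{I_2+1}$ carry non-integer residues, not only $b_{I_2+1}$. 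So whatever argument handles non-integer $\beta$ must depart from \textbf{Proposition}~\ref{S-2-P-3} in substance, not just in bookkeeping.

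Your polynomial-identity route is a legitimate alternative, but as written it is only a plan with the main step missing. The reduction to $(z-b)(P'Q-sPQ')-s\beta PQ=Bz^{\alpha-1}$ is correct, and for $I_2=0$ the identity does unwind as the simple downward recursion $c_j=\tfrac{b(j+1)}{j-\alpha}\,c_{j+1}$ in the coefficients of $P$. For $I_2\ge1$, however, you never exhibit an ordering of the unknowns (the coefficients of $P$, of $Q$, and $b$) in which the bilinear system becomes triangular, you do not identify the alleged pivot, and you give no reason why that pivot should have the shape ``$s\beta$ plus an integer'' so that the hypothesis $\beta\notin\mathbb Z$ forces it to be nonzero. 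The inequality $I_1>I_2+\beta$ is the metric-side condition $\lambda=-s<-1$ and has no visible role in the algebra; ``keeps the signs and the bookkeeping in order'' is not an argument. You yourself flag the consistency of the recursion as ``the real obstacle,'' and it is: until that step is written out explicitly for general $I_2\ge1$ and the pivots are shown to be nonzero, the proposal does not establish existence in any case beyond $I_2=0$.
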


By \textbf{Proposition} \ref{S-2-P-5} and \textbf{Theorem} \ref{from 1-form to HCMU}, there  exists a non-CSC HCMU metric $g$ on $S^{2}_{\{\alpha,\beta\}}$ such that the singularity of singular angle $2\pi\alpha$ is the saddle points of the  Gaussian curvature $K$ and the singularity of singular angle $2\pi\beta$ is a minimum point of $K$.\par

Fourthly, if $\alpha,\beta\in \mathbb{Z}^{+}$, a non-CSC HCMU metric $g$ on $S^{2}_{\{\alpha,\beta\}}$ exists if and only if one of the following two conditions holds.\par
(A) The two singularities are both saddle points of the Gaussian curvature $K$ of $g$. \par
 Denote the maximum and minimum of $K$ by $K_{1}$ and $K_{2}$ respectively. Set
 $$\sigma=-\frac{1}{(K_{1}-K_{2})(K_{2}+2K_{1})},~~~\lambda=-\frac{2K_{1}+K_{2}}{2K_{2}+K_{1}}.$$
 Suppose the character 1-form of $g$ is $\omega$. Since $g$ has exactly two singularities and the singularities are both saddle points of the Gauss curvature, $\omega$ has two zeros of orders $\alpha-1$ and $\beta-1$ respectively, and $\alpha+\beta$ simple poles which are smooth extremal points of $K$.  Suppose $K$ has $I_{1}$ maximum points and $I_{2}$ minimum points, then $I_{1}+I_{2}=\alpha+\beta$.  Regard $S^{2}$ as $\mathbb{C}\cup\{\infty\}$. We can assume that $0$ and $1$ are the zeros of $\omega$, $a_{1},\ldots,a_{I_{1}}$ are poles of $\omega$ at which residues are negative and  $b_{1},\ldots,b_{I_{2}}$ are poles of $\omega$ at which residues are positive. Then $Res_{a_{k}}(\omega)=\sigma,k=1,\ldots,I_{1}~ \text{and}~ Res_{b_{l}}(\omega)=\sigma\lambda,l=1,\ldots,I_{2}$. Applying the Residue theorem, we obtain $I_{1}\sigma+I_{2}\sigma\lambda=\sigma(I_{1}+I_{2}\lambda)=0$, which implies $\lambda=-\frac{I_{1}}{I_{2}}$. Since $\lambda=-\frac{2K_{1}+K_{2}}{2K_{2}+K_{1}}=-\frac{I_{1}}{I_{2}}<-1$, we obtain $I_{1}> I_{2}$. Consequently, there is a nonzero complex number $\widetilde{B}$ such that
 $$\omega=\frac{\widetilde{B}z^{\alpha-1}(z-1)^{\beta-1}}{\prod_{k=1}^{I_{1}}(z-a_{k})\prod_{l=1}^{I_{2}}(z-b_{l})}dz=\sigma(\sum_{k=1}^{I_{1}}\frac{1}{z-a_{k}}-\frac{I_{1}}{I_{2}}\sum_{l=1}^{I_{2}}\frac{1}{z-b_{l}})dz.$$
\begin{proposition}\label{S-2-P-1}
Let $\alpha\geq2,\beta\geq 2,I_{1}\geq2$ and $I_{2}\geq 1$ be 4 integers such that $I_{1}>I_{2}$ and $I_{1}+I_{2}=\alpha+\beta$. Then, there exists a meromorphic 1-form $\omega$ on $S^{2}=\mathbb{C}\cup\{\infty\}$ defined by the form
\begin{equation}\label{S-2-E0}
\omega=\frac{Bz^{\alpha-1}(z-1)^{\beta-1}}{\prod_{k=1}^{I_{1}}(z-a_{k})\prod_{l=1}^{I_{2}}(z-b_{l})}dz=(\sum_{k=1}^{I_{1}}\frac{1}{z-a_{k}}-\frac{I_{1}}{I_{2}}\sum_{l=1}^{I_{2}}\frac{1}{z-b_{l}})dz,
\end{equation}
where $B\in\mathbb{C}\setminus\{0\}$ is a constant, and $a_{1},\ldots,a_{I},b_{1},\ldots,b_{I}\in \mathbb{C}\setminus\{0,1\}$ are distinct complex numbers,
if and only if one of the following conditions holds.\\
(1) $I_{2}=1$;\\
(2) $I_{2}\geq 2,I_{2}\mid I_{1}$ and $I_{1}\geq\alpha$;\\
(3) $I_{2}\geq2$ and $I_{2}\nmid I_{1}$.
\end{proposition}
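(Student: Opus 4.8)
The plan is to mimic the proof of Proposition \ref{S-1-P-1}, translating the existence of the 1-form $\omega$ into the existence of a rational function (or branched covering) of $\overline{\mathbb{C}}$ with prescribed branch data, and then invoking the realizability theorems of Boccara (\textbf{Theorem} \ref{M-F-T-3}) and Song--Xu (\textbf{Theorem} \ref{M-F-T}). The key observation is that the stated form of $\omega$ forces $\int\omega$ to be $\log$ of an explicit multivalued function: writing $m=\mathrm{GCD}(I_1,I_2)$, the identity of partial fractions shows $\omega=\tfrac{I_2}{m}\,d\log f$ where formally $f(z)=C\,\dfrac{\prod_{k=1}^{I_1}(z-a_k)^{I_2/m}}{\prod_{l=1}^{I_2}(z-b_l)^{I_1/m}}$, and $f$ is a genuine rational function of degree $I_1I_2/m$ precisely when $m\mid I_2$ and $m\mid I_1$, which is automatic. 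So the $a_k$ are preimages of $0$ each with multiplicity $I_2/m$, the $b_l$ are preimages of $\infty$ each with multiplicity $I_1/m$, and the numerator $z^{\alpha-1}(z-1)^{\beta-1}$ in the expression for $\omega=df/f$ records that $f'$ vanishes at $z=0$ to order $\alpha-1$ and at $z=1$ to order $\beta-1$; since $v(f)=(I_1-1)+(I_2-1)+(\alpha-1)+(\beta-1)$ must hold by Riemann--Hurwitz (using $I_1+I_2=\alpha+\beta$ this gives $v(f)=2(I_1-1)$, consistent with degree $d=I_1I_2/m$ when $m=1$, i.e. $d=I_1I_2$, and more care is needed when $m>1$). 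The upshot is that existence of $\omega$ is equivalent to the realizability of the branch data
$$\Lambda=\Big\{(\underbrace{\tfrac{I_2}{m},\ldots,\tfrac{I_2}{m}}_{I_1}),\ (\underbrace{\tfrac{I_1}{m},\ldots,\tfrac{I_1}{m}}_{I_2}),\ (\alpha,1,\ldots,1),\ (\beta,1,\ldots,1)\Big\}$$
of $d=I_1I_2/m$, with the two extra branch points placed at $0$ and $1$ and the requirement that all $a_k,b_l$ lie in $\mathbb{C}\setminus\{0,1\}$ (automatic once they are distinct preimages of $0,\infty$, which differ from $z=0,1$ since $f(0),f(1)\notin\{0,\infty\}$).

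\textbf{Sufficiency.} For case (1), $I_2=1$, one checks directly (as in Proposition \ref{S-1-P-1}) that $f(z)=C\prod_{k=1}^{I_1}(z-a_k)$ — or rather a degree-$I_1$ rational function — can be chosen with $f'$ vanishing to the prescribed orders at $0$ and $1$; here $\alpha+\beta=I_1+1$, so $(\alpha-1)+(\beta-1)=I_1-1=v$, and a polynomial of degree $I_1$ whose derivative is $Bz^{\alpha-1}(z-1)^{\beta-1}$ exists and has $I_1$ distinct roots for generic $C$. For cases (2) and (3), I would apply \textbf{Theorem} \ref{M-F-T} with $l=2$: the two "star" partitions are $(\alpha,1,\ldots,1)$ and $(\beta,1,\ldots,1)$ with $m_1=\alpha-1$, $m_2=\beta-1$, and $(m_1,m_2)$ is a partition of $p+q-2$ where $p=I_1$, $q=I_2$ and $p+q-2=\alpha+\beta-2=m_1+m_2$, so the hypothesis of the theorem is met; the partitions $(a_1,\ldots,a_p)=(I_2/m,\ldots,I_2/m)$ and $(b_1,\ldots,b_q)=(I_1/m,\ldots,I_1/m)$ have $\mathrm{GCD}$ equal to $\mathrm{GCD}(I_1/m,I_2/m)=1$, so $d/\mathrm{GCD}=d=I_1I_2/m$, and the criterion becomes $\max(\alpha-1,\beta-1)<I_1I_2/m$, i.e. $\max(\alpha,\beta)\le I_1I_2/m$. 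In case (3), $I_2\nmid I_1$ means $m<I_2\le I_1$, so $I_1I_2/m\ge I_1+1>\max(\alpha,\beta)$ since $\alpha,\beta\le\alpha+\beta-1=I_1+I_2-1$ — here I would need to argue carefully that $I_1I_2/m>\max(\alpha,\beta)$ always holds when $I_2\nmid I_1$ and $I_1>I_2$; when $m\mid I_1$ (case (2)), $I_1I_2/m=I_1\cdot(I_2/m)\ge I_1$, so the criterion $\max(\alpha,\beta)\le I_1$ is exactly the stated condition $I_1\ge\alpha$ (noting $\alpha\ge\beta$ is not assumed, so one writes $I_1\ge\max\{\alpha,\beta\}$; but the proposition states $I_1\ge\alpha$, so presumably symmetry or a normalization reduces to this — I would clarify the role of $\alpha$ versus $\beta$ here). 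Then \textbf{Theorem} \ref{from 1-form to HCMU} converts the resulting $f$ into $\omega=\tfrac{I_2}{m}\,df/f$, which has the required form.

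\textbf{Necessity.} Conversely, given such an $\omega$, set $f=\exp(\int\omega)^{m/I_2}$; the residue conditions (all residues of $\omega$ are rational multiples of $\sigma$ with denominators dividing $I_2/m$ resp. $I_1/m$) guarantee $f$ is single-valued, hence a rational function on $\overline{\mathbb{C}}$ of degree $\mathrm{lcm}$ considerations giving $d=I_1I_2/m$, with branch data exactly $\Lambda$ above (the numerator $z^{\alpha-1}(z-1)^{\beta-1}$ pins down that $f$ is branched to orders $\alpha,\beta$ over $f(0),f(1)$). If $I_2=1$ there is nothing to prove. If $I_2\ge2$ and $I_2\mid I_1$, so $m=I_2$ and $d=I_1$, then $f=\exp(\int\omega)$ is rational of degree $I_1$ with $f'(z)=CBz^{\alpha-1}(z-1)^{\beta-1}/\prod_l(z-b_l)^{I_1/I_2+1}$; comparing degrees of $f'$ on $\overline{\mathbb{C}}$ (or just bounding the vanishing order of $f'$ at $0$ and at $1$ by that of a degree-$I_1$ map, which is at most $I_1-1$ at any single point) yields $\alpha-1\le I_1-1$ and $\beta-1\le I_1-1$, i.e. $I_1\ge\alpha$ (and $\ge\beta$), which is condition (2). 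So if (2) fails while $I_2\mid I_1$, we get a contradiction, and hence either $I_2=1$, or $I_2\nmid I_1$, or ($I_2\mid I_1$ and $I_1\ge\alpha$), which is the stated trichotomy.

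\textbf{Main obstacle.} The technical heart — and the step I expect to be most delicate — is the bookkeeping when $m=\mathrm{GCD}(I_1,I_2)>1$: one must verify that $\int\omega$ exponentiates to an honest rational function of the correct degree $I_1I_2/m$ (not a proper power), that the $I_1$ points $a_k$ and $I_2$ points $b_l$ are exactly the full fibers over $0$ and $\infty$ with uniform multiplicities $I_2/m$ and $I_1/m$, and that the two prescribed branch points at $0,1$ together with these fibers satisfy Riemann--Hurwitz with no "extra" branching elsewhere — so that \textbf{Theorem} \ref{M-F-T} applies with precisely the data $(m_1,m_2)=(\alpha-1,\beta-1)$ being a partition of $p+q-2$. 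Getting the inequality $\max(\alpha-1,\beta-1)<I_1I_2/m$ to match the stated conditions (1)--(3) exactly — in particular checking it is automatic in case (3) and equivalent to $I_1\ge\alpha$ in case (2) — will require a short but careful elementary number-theoretic argument using $I_1+I_2=\alpha+\beta$ and $I_1>I_2$.
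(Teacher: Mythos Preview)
Your proposal is correct and follows essentially the same route as the paper: translate the existence of $\omega$ into the existence of a rational function $f$ on $\overline{\mathbb{C}}$ via $\omega = c\,\dfrac{df}{f}$ with prescribed branch data, then invoke the Boccara/Song--Xu realizability theorems; the necessity argument (exponentiate $\int\omega$ when $I_2\mid I_1$ and bound the vanishing order of $f'$ by $\deg f-1$) is identical to the paper's.

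One technical point worth cleaning up: in your uniform application of Theorem~\ref{M-F-T} with $l=2$ to cases (2) and (3), note that when $I_2\mid I_1$ (so $m=\gcd(I_1,I_2)=I_2$) the partition $(\underbrace{I_2/m,\ldots,I_2/m}_{I_1})$ in your $\Lambda$ degenerates to the trivial partition $(1,\ldots,1)$, and Theorem~\ref{M-F-T} as stated concerns genuine branch data. The paper handles case (2) separately by applying Boccara's Theorem~\ref{M-F-T-3} to the three non-trivial partitions
\[
\Lambda=\Big\{(\underbrace{I_1/I_2,\ldots,I_1/I_2}_{I_2}),\ (\alpha,1,\ldots,1),\ (\beta,1,\ldots,1)\Big\}
\]
of $d=I_1$; since the $1$'s in $(\alpha,1,\ldots,1)$ force the relevant $\mathrm{GCD}$ to be $1$, Boccara's criterion becomes exactly $\max(\alpha,\beta)\le I_1$, matching condition (2). (Also, your phrase ``when $m\mid I_1$'' should read ``when $I_2\mid I_1$'', since $m=\gcd(I_1,I_2)$ always divides $I_1$.)
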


\begin{proof}
\textbf{Sufficiency}\par
If $I_{2}=1$, a direct calculation verifies the result.\par
For $I_{2}\geq 2,I_{2}\mid I_{1}$ and $I_{1}\geq\alpha$, by \textbf{Theorem} \ref{M-F-T-3}, there exists a meromorphic function $f:\mathbb{C}\cup\{\infty\}\rightarrow \mathbb{C}\cup\{\infty\}$ of degree $I_{1}$ with branch data $$\Lambda=\{(\underbrace{\frac{I_{1}}{I_{2}},\ldots,\frac{I_{1}}{I_{2}}}_{I_{2}}),(\alpha,1,\ldots,1),(\beta,1,\ldots,1)\}.$$
Without loss of generality, we can express $f$ as
$$f(z)=\frac{C\prod_{k=1}^{I_{1}}(z-a_{k})}{\prod_{l=1}^{I_{2}}(z-b_{l})^{\frac{I_{1}}{I_{2}}}}$$
where $a_{1},\ldots,a_{I_{1}},b_{1},\ldots,b_{I_{2}}\in \mathbb{C}\setminus\{0\}$ are distinct complex numbers and $C\neq0$ is a constant. Then
$$\frac{df}{f}=\frac{Bz^{\alpha-1}(z-1)^{\beta-1}}{\prod_{k=1}^{I_{1}}(z-a_{k})\prod_{l=1}^{I_{2}}(z-b_{l})}dz=(\sum_{k=1}^{I_{1}}\frac{1}{z-a_{k}}-\frac{I_{1}}{I_{2}}\sum_{l=1}^{I_{2}}\frac{1}{z-b_{l}})dz,$$
where $B\neq0$ is a constant, is the desired meromorphic 1-form.\par
For $I_{2}\geq2$ and $I_{2}\nmid I_{1}$, set $(I_{1},I_{2})=m\geq 1$. Then, by \textbf{Theorem} \ref{M-F-T}, there exists a meromorphic function $f:\mathbb{C}\cup\{\infty\}\rightarrow \mathbb{C}\cup\{\infty\}$ of degree $\frac{I_{1}I_{2}}{m}$ and with branch data $$\Lambda=\{(\underbrace{\frac{I_{2}}{m},\ldots,\frac{I_{2}}{m}}_{I_{1}}),(\underbrace{\frac{I_{1}}{m},\ldots,\frac{I_{1}}{m}}_{I_{2}}),(\alpha,1,\ldots,1),(\beta,1,\ldots,1)\}.$$
Without loss of generality, we can express $f$ as
$$f(z)=\frac{C\prod_{k=1}^{I_{1}}(z-a_{k})^{\frac{I_{2}}{m}}}{\prod_{l=1}^{I_{2}}(z-b_{l})^{\frac{I_{1}}{m}}}$$
where $a_{1},\ldots,a_{I_{1}},b_{1},\ldots,b_{I_{2}}\in \mathbb{C}\setminus\{0\}$ are distinct complex numbers and $C\neq0$ is a constant. Then
$$\frac{df}{f}=\frac{Bz^{\alpha-1}(z-1)^{\beta-1}}{\prod_{k=1}^{I_{1}}(z-a_{k})\prod_{l=1}^{I_{2}}(z-b_{l})}dz=\frac{I_{2}}{m}(\sum_{k=1}^{I_{1}}\frac{1}{z-a_{k}}-\frac{I_{1}}{I_{2}}\sum_{l=1}^{I_{2}}\frac{1}{z-b_{l}})dz,$$
where $B\neq0$ is a constant. Thus $\frac{m}{I_{2}}\cdot\frac{df}{f}$ is the desired meromorphic 1-form.\par
\textbf{Necessity}\par
Suppose there exists a meromorphic 1-form on $S^{2}=\mathbb{C}\cup\{\infty\}$ defined by the form
 $$\omega=\frac{Bz^{\alpha-1}(z-1)^{\beta-1}}{\prod_{k=1}^{I_{1}}(z-a_{k})\prod_{l=1}^{I_{2}}(z-b_{l})}dz=(\sum_{k=1}^{I_{1}}\frac{1}{z-a_{k}}-\frac{I_{1}}{I_{2}}\sum_{l=1}^{I_{2}}\frac{1}{z-b_{l}})dz,$$
where $B\in\mathbb{C}\setminus\{0\}$ is a constant and $a_{1},\ldots,a_{I_{1}},b_{1},\ldots,b_{I_{2}}\in \mathbb{C}\setminus\{0\}$ are distinct complex numbers.\par

If $I_{2}=1$ or $I_{2}\geq2$ and $I_{2}\nmid I_{1}$, there is no further proof required. Consequently, we only need to consider the case (2).\par
Suppose $I_{2}\geq2$ and $I_{2}|I_{1}$. Then
$$f(z)=exp(\int \omega)=\frac{C\prod_{k=1}^{I_{1}}(z-a_{k})}{\prod_{l=1}^{I_{2}}(z-b_{l})^{\frac{I_{1}}{I_{2}}}}$$
is a meromorphic function on $\mathbb{C}\cup\{\infty\}$ with degree $I_{1}$, where $C\neq0$ is a constant. The derivative of $f$ is
$$f'(z)=\frac{CBz^{\alpha-1}(z-1)^{\beta-1}}{\prod_{l=1}^{I_{2}}(z-b_{l})^{\frac{I_{1}}{I_{2}}+1}}.$$
Then $\alpha\leq I_{1}$. \par
The proofs of other two cases are similar.
\end{proof}

By \textbf{Proposition} \ref{S-2-P-1} and \textbf{Theorem} \ref{from 1-form to HCMU}, there  exists a non-CSC HCMU metric $g$ on $S^{2}_{\{\alpha,\beta\}}$ such that the singularity are all saddle points of the  Gaussian curvature $K$ of $g$. Furthermore, $K$ has $I_{1}$ maximum points and $I_{2}$ minimum points.

(B) If one singularity is the saddle point of the  Gaussian curvature $K$ of $g$ and another singularity is an extremal point of $K$, it leads to two distinct scenarios.\par
(B-1) Another singularity is a maximal point of $K$.\par
 Denote the maximum and minimum of $K$ by $K_{1}$ and $K_{2}$ respectively. Set
 $$\sigma=-\frac{1}{(K_{1}-K_{2})(K_{2}+2K_{1})},~~~\lambda=-\frac{2K_{1}+K_{2}}{2K_{2}+K_{1}}.$$
 Suppose the character 1-form of $g$ is $\omega$. Without loss of generality, suppose the singularity of singular angle $2\pi\alpha$ is the saddle point of $K$. Since $g$ has two singularities and one is the saddle point of $K$ and another is a maximum point of $K$, $\omega$ has a zero of order $\alpha-1$, $\alpha$ simple poles which are smooth extremal points of $K$ and a simple pole which is the singularity of conical angle $2\pi\beta$.  Suppose $K$ has $I_{1}+1$ maximum points and $I_{2}$ minimum points, then $I_{1}+I_{2}=\alpha$. Regard $S^{2}$ as $\mathbb{C}\cup\{\infty\}$. We can assume that $0$ is the zero of $\omega$, $a_{1},\ldots,a_{I_{1}},a_{I_{1}+1}$ are poles of $\omega$ at which residues are negative and  $b_{1},\ldots,b_{I_{2}}$ are poles of $\omega$ at which residues are positive. Then $Res_{a_{k}}(\omega)=\sigma,k=1,\ldots,I_{1},Res_{a_{I_{1}+1}}(\omega)=\sigma\beta~ \text{and}~ Res_{b_{l}}(\omega)=\sigma\lambda,l=1,\ldots,I_{2}$. This leads to $I_{1}\sigma+\sigma\beta+I_{2}\sigma\lambda=\sigma(I_{1}+\beta+I_{2}\lambda)=0$, which implies $\lambda=-\frac{I_{1}+\beta}{I_{2}}$. Since $\lambda=-\frac{2K_{1}+K_{2}}{2K_{2}+K_{1}}=-\frac{I_{1}+\beta}{I_{2}}<-1$, we obtain $I_{1}+\beta> I_{2}$. Consequently,  there is a nonzero $\widetilde{B}$ such that
 $$\omega=\frac{\widetilde{B}z^{\alpha-1}}{(z-a_{I_{1}+1})\prod_{k=1}^{I_{1}}(z-a_{k})\prod_{l=1}^{I_{2}}(z-b_{l})}dz=\sigma(\sum_{k=1}^{I_{1}}\frac{1}{z-a_{k}}+\frac{\beta}{z-a_{I_{1}+1}}-\frac{I_{1}+\beta}{I_{2}}\sum_{l=1}^{I_{2}}\frac{1}{z-b_{l}})dz.$$

\begin{proposition}\label{S-2-P-2}
Let $\alpha,\beta\geq 2,I_{1}\geq0$ and $I_{2}\geq1$ be $4$ integers such that $I_{1}+I_{2}=\alpha$ and $I_{1}+\beta>I_{2}$. Then, there exists a meromorphic 1-form $\omega$ on $S^{2}=\mathbb{C}\cup\{\infty\}$ defined by the form
\begin{equation}\label{S-2-E0}
\omega=\frac{Bz^{\alpha-1}}{(z-a_{I_{1}+1})\prod_{k=1}^{I_{1}}(z-a_{k})\prod_{l=1}^{I_{2}}(z-b_{l})}dz=(\sum_{k=1}^{I_{1}}\frac{1}{z-a_{k}}+\frac{\beta}{z-a_{I_{1}+1}}-\frac{I_{1}+\beta}{I_{2}}\sum_{l=1}^{I_{2}}\frac{1}{z-b_{l}})dz
\end{equation}
where $B\in\mathbb{C}\setminus\{0\}$ is a constant and $a_{1},\ldots,a_{I_{1}},a_{I_{1}+1},b_{1},\ldots,b_{I_{2}}\in \mathbb{C}\setminus\{0,1\}$ are distinct complex numbers, if and only if one of the following conditions holds.\\
(1) $I_{2}=1$;\\
(2) $I_{2}\geq 2,I_{2}\mid (I_{1}+\beta)$ and $I_{1}+\beta>\alpha-1$;\\
(3) $I_{2}\geq2$ and $I_{2}\nmid (I_{1}+\beta)$.
\end{proposition}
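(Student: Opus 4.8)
The plan is to argue exactly as in the proof of \textbf{Proposition} \ref{S-2-P-1}, exploiting the correspondence $\omega\leftrightarrow f=\exp(\int\omega)$ between meromorphic $1$-forms with integral residues and rational self-maps of $\overline{\mathbb{C}}$. Under it, a simple pole of $\omega$ with residue $r\in\mathbb{Z}_{>0}$ (resp.\ $-r$) is a zero (resp.\ pole) of $f$ of order $r$, and a zero of $\omega$ of order $k$ at a point where $f$ is finite and nonzero is a critical point of $f$ of local degree $k+1$ lying over a value other than $0$ and $\infty$. Hence the existence of a $1$-form of the prescribed shape amounts to the existence of a branched covering of $\overline{\mathbb{C}}$ with a prescribed branch datum, which is decided by \textbf{Theorem} \ref{M-F-T-3}; since here $\omega$ has only one zero, the three-partition form of Boccara's theorem already suffices and \textbf{Theorem} \ref{M-F-T} is not needed.

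\emph{Necessity.} Suppose an $\omega$ of the stated form exists. If $I_2=1$ we are in case (1), and if $I_2\ge2$ with $I_2\nmid(I_1+\beta)$ we are in case (3). So assume $I_2\ge2$ and $I_2\mid(I_1+\beta)$. Then the residues $1$ (with multiplicity $I_1$), $\beta$, and $-(I_1+\beta)/I_2$ (with multiplicity $I_2$) are all integers, so $f=\exp(\int\omega)=C\,(z-a_{I_1+1})^{\beta}\prod_{k=1}^{I_1}(z-a_k)/\prod_{l=1}^{I_2}(z-b_l)^{(I_1+\beta)/I_2}$ is a genuine rational function, with coprime numerator and denominator of common degree $I_1+\beta$, so $\deg f=I_1+\beta$. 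Since $f(0)\notin\{0,\infty\}$ while $\omega/dz=f'/f$ vanishes to order $\alpha-1$ at $z=0$, the point $z=0$ is a critical point of $f$ of local degree $\alpha$; therefore $\alpha\le\deg f=I_1+\beta$, i.e.\ $I_1+\beta>\alpha-1$, which is case (2).

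\emph{Sufficiency.} In case (1) ($I_2=1$) we have $\alpha-1=I_1$ and the single $b$-residue is $-(I_1+\beta)$; after moving that pole to $\infty$, a direct construction works (take a polynomial $P$ of degree $I_1+\beta$ with $P'=c(z-u)^{I_1}(z-v)^{\beta-1}$ and $P(v)=0$; then $\omega=dP/P$), as does \textbf{Theorem} \ref{M-F-T-3} applied to the branch datum $\{(\beta,1,\dots,1),\,(I_1+\beta),\,(\alpha,1,\dots,1)\}$ of degree $I_1+\beta$. In case (2) apply \textbf{Theorem} \ref{M-F-T-3} to $\Lambda=\{(\beta,\underbrace{1,\dots,1}_{I_1}),\ (\underbrace{(I_1+\beta)/I_2,\dots,(I_1+\beta)/I_2}_{I_2}),\ (\alpha,\underbrace{1,\dots,1}_{I_1+\beta-\alpha})\}$ of degree $d=I_1+\beta$: one has $v(\Lambda)=2d-2$, so Boccara's second alternative applies; the ${\rm GCD}$ of the parts of the first two partitions is $1$ when $I_1\ge1$ (and the criterion reduces to $\alpha-1<I_2=\alpha$ when $I_1=0$), while $\alpha-1<d=I_1+\beta$ is precisely the hypothesis of case (2); then $\omega=df/f$. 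In case (3) put $m={\rm GCD}(I_2,I_1+\beta)$, $t=I_2/m$ $(\ge2)$, $s=(I_1+\beta)/m$; the standing hypothesis $I_1+\beta>I_2$ forces $s\ge2$, since $s=1$ would give $(I_1+\beta)\mid I_2$. Then $d:=I_2(I_1+\beta)/m=mts$ satisfies $d-\alpha\ge\beta>0$, and \textbf{Theorem} \ref{M-F-T-3} applied to $\Lambda=\{(\beta t,\underbrace{t,\dots,t}_{I_1}),\ (\underbrace{s,\dots,s}_{I_2}),\ (\alpha,\underbrace{1,\dots,1}_{d-\alpha})\}$ — for which $v(\Lambda)=2d-2$, $\alpha-1<d$, and the first two partitions have ${\rm GCD}(t,s)=1$ — produces the required $f$; then $\omega:=(1/t)\,df/f$ has exactly the residues $1,\beta,-(I_1+\beta)/I_2$. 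In every case the $1$-form so obtained has $I_1+I_2+1$ simple poles, hence $\alpha-1$ zeros counted with multiplicity; since we exhibited a single zero of order $\alpha-1$, that is its full zero divisor, and a Möbius change of coordinate on the domain moves this zero to $z=0$ (and the poles off $\{0,1\}$), yielding $\omega$ in the stated normal form.

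The step I expect to be the main obstacle is the sufficiency of case (3): one must find the right auxiliary degree $d=I_2(I_1+\beta)/{\rm GCD}(I_2,I_1+\beta)$ and branch datum for which Boccara's numerical criterion is met, and it is exactly here that the geometric constraint $I_1+\beta>I_2$ enters — equivalently, via $s\ge2$, it gives $d>\alpha-1$. By contrast, the necessity argument and cases (1)--(2) are routine bookkeeping once the $\exp(\int\omega)$ correspondence is set up.
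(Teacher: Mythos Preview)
Your proposal is correct and follows essentially the same strategy as the paper's proof: both directions use the correspondence $\omega\leftrightarrow f=\exp(\int\omega)$ together with Boccara's \textbf{Theorem}~\ref{M-F-T-3}, applied to the same auxiliary branch data in each of the three cases. One minor quibble: in your closing remark on case~(3), the bound $d>\alpha-1$ is obtained from $t\ge 2$ combined with $I_1+\beta>I_2$ (this is exactly what your inequality $d-\alpha\ge\beta$ already encodes), not from $s\ge 2$ alone, since $d=sI_2\ge 2I_2$ need not exceed $\alpha-1$ when $I_1$ is large.
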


\begin{proof}
\textbf{Sufficiency}\par
If $I_{2}=1$, a direct calculation verifies the result.\par
For $I_{2}\geq 2,I_{2}\mid (I_{1}+\beta)$ and $I_{1}+\beta>\alpha-1$, by the \textbf{Theorem} \ref{M-F-T-3}, there exists a meromorphic function $f:\mathbb{C}\cup\{\infty\}\rightarrow \mathbb{C}\cup\{\infty\}$ of degree $I_{1}+\beta$ and with branch data $$\Lambda=\{(\underbrace{\frac{I_{1}+\beta}{I_{2}},\ldots,\frac{I_{1}+\beta}{I_{2}}}_{I_{2}}),(\beta,\underbrace{1,\ldots,1}_{I_{1}}),(\alpha,1,\ldots,1)\}.$$
Without loss of generality, we can express $f$ as
$$f(z)=\frac{C(z-a_{I_{1}+1})^{\beta}\prod_{k=1}^{I_{1}}(z-a_{k})}{\prod_{l=1}^{I_{2}}(z-b_{l})^{\frac{I_{1}+\beta}{I_{2}}}}$$
where $a_{1},\ldots,a_{I_{1}},a_{I_{1}+1},b_{1},\ldots,b_{I_{2}}\in \mathbb{C}\setminus\{0\}$ are distinct complex numbers and $C\neq0$ is a constant. Then
$$\frac{df}{f}=\frac{Bz^{\alpha-1}}{\prod_{k=1}^{I_{1}}(z-a_{k})\prod_{l=1}^{I_{2}}(z-b_{l})}dz=(\sum_{k=1}^{I_{1}}\frac{1}{z-a_{k}}+\frac{\beta}{z-a_{I_{1}+1}}-\frac{I_{1}+\beta}{I_{2}}\sum_{l=1}^{I_{2}}\frac{1}{z-b_{l}})dz,$$
where $B\neq0$ is a constant, is the desired meromorphic 1-form.\par
For $I_{2}\geq2$ and $I_{2}\nmid (I_{1}+\beta)$, set $(I_{1}+\beta,I_{2})=m\geq 1$, then $\frac{(I_{1}+\beta)I_{2}}{m}\geq2(I_{1}+\beta)>I_{1}+I_{2}=\alpha$. By \textbf{Theorem} \ref{M-F-T-3}, there exists a meromorphic function $f:\mathbb{C}\cup\{\infty\}\rightarrow \mathbb{C}\cup\{\infty\}$ of degree $\frac{(I_{1}+\beta)I_{2}}{m}$ and with branch data $$\Lambda=\{(\frac{\beta I_{2}}{m},\underbrace{\frac{I_{2}}{m},\ldots,\frac{I_{2}}{m}}_{I_{1}}),(\underbrace{\frac{I_{1}+\beta}{m},\ldots,\frac{I_{1}+\beta}{m}}_{I_{2}}),(\alpha,1,\ldots,1)\}.$$
Without loss of generality, we can express $f$ as
$$f(z)=\frac{C(z-a_{I_{1}+1})^{\frac{\beta I_{2}}{m}}\prod_{k=1}^{I_{1}}(z-a_{k})^{\frac{I_{2}}{m}}}{\prod_{l=1}^{I_{2}}(z-b_{l})^{\frac{I_{1}+\beta}{m}}}$$
where $a_{1},\ldots,a_{I_{1}},a_{I_{1}+1},b_{1},\ldots,b_{I_{2}}\in \mathbb{C}\setminus\{0\}$ are disitnct complex numbers and $C\neq0$ is a constant. Then
$$\frac{df}{f}=\frac{Bz^{\alpha-1}}{(z-a_{I_{1}+1})\prod_{k=1}^{I_{1}}(z-a_{k})\prod_{l=1}^{I_{2}}(z-b_{l})}dz=\frac{I_{2}}{m}(\sum_{k=1}^{I_{1}}\frac{1}{z-a_{k}}+\frac{\beta}{z-a_{I_{2}+1}}-\frac{I_{1}+\beta}{I_{2}}\sum_{l=1}^{I_{2}}\frac{1}{z-b_{l}})dz,$$
where $B\neq0$ is a constant. Thus $\frac{m}{I_{2}}\cdot\frac{df}{f}$ is the desired meromorphic 1-form.\par
\textbf{Necessity}\par
Suppose there exists a meromorphic 1-form on $S^{2}=\mathbb{C}\cup\{\infty\}$ defined by the form
 $$\omega=\frac{Bz^{\alpha-1}}{(z-a_{I_{1}+1})\prod_{k=1}^{I_{1}}(z-a_{k})\prod_{l=1}^{I_{2}}(z-b_{l})}dz=(\sum_{k=1}^{I_{1}}\frac{1}{z-a_{k}}+\frac{\beta}{z-a_{I_{1}+1}}-\frac{I_{1}+\beta}{I_{2}}\sum_{l=1}^{I_{2}}\frac{1}{z-b_{l}})dz$$
where $a_{1},\ldots,a_{I_{1}},b_{1},\ldots,b_{I_{2}}\in \mathbb{C}\setminus\{0\}$ are distinct complex numbers and $B\in\mathbb{C}\setminus\{0\}$ is a constant.\par
If $I_{2}=1$ or $I_{2}\geq2$ and $I_{2}\nmid (I_{1}+\beta)$, there is no further proof required. Consequently, we only need to consider the case (2).\par
Suppose $I_{2}\geq2$ and $I_{2}|(I_{1}+\beta)$. Then
$$f(z)=exp(\int \omega)=\frac{C(z-a_{I_{1}+1})^{\beta}\prod_{k=1}^{I_{1}}(z-a_{k})}{\prod_{l=1}^{I_{2}}(z-b_{l})^{\frac{I_{1}+\beta}{I_{2}}}}$$
is a meromorphic function on $\mathbb{C}\cup\{\infty\}$ with degree $I_{1}+\beta$, where $C\neq0$ is a constant. The derivative of $f$ is
$$f'(z)=\frac{CBz^{\alpha-1}(z-a_{I_{1}+1})^{\beta-1}}{\prod_{l=1}^{I_{2}}(z-b_{l})^{\frac{I_{1}}{I_{2}}+1}}.$$
Then $I_{1}+\beta>\alpha-1$.
\end{proof}

By \textbf{Proposition} \ref{S-2-P-2} and \textbf{Theorem} \ref{from 1-form to HCMU}, there  exists a non-CSC HCMU metric $g$ on $S^{2}_{\{\alpha,\beta\}}$ such that the singularity of singular angle $2\pi\alpha$ is the saddle points of the  Gaussian curvature $K$ and the singularity of singular angle $2\pi\beta$ is a maximum point of $K$.

(B-2) Another singularity is a minimal point of $K$.\par
 Denote the maximum and minimum of $K$ by $K_{1}$ and $K_{2}$ respectively. Set
 $$\sigma=-\frac{1}{(K_{1}-K_{2})(K_{2}+2K_{1})},~~~\lambda=-\frac{2K_{1}+K_{2}}{2K_{2}+K_{1}}.$$
 Suppose the character 1-form of $g$ is $\omega$.  Without loss of generality, suppose the singularity of singular angle $2\pi\alpha$ is the saddle point of $K$. Since $g$ has two singularities and one is the saddle point of $K$ and another is a maximum point of $K$, $\omega$ has a zero of order $\alpha-1$, $\alpha$ simple poles which are smooth extremal points of $K$ and a simple pole which is the singularity of conical angle $2\pi\beta$.  Suppose $K$ has $I_{1}$ maximum points and $I_{2}+1$ minimum points, then $I_{1}+I_{2}=\alpha$.  Regard $S^{2}$ as $\mathbb{C}\cup\{\infty\}$. We can assume that $0$ is the zero of $\omega$, $a_{1},\ldots,a_{I_{1}}$ are poles of $\omega$ at which residues are negative and  $b_{1},\ldots,b_{I_{2}},b_{I_{2}+1}$ are poles of $\omega$ at which residues are positive. Then $Res_{a_{k}}(\omega)=\sigma,k=1,\ldots,I_{1},Res_{b_{I_{2}+1}}(\omega)=\sigma\lambda\beta~ \text{and}~ Res_{b_{l}}(\omega)=\sigma\lambda,l=1,\ldots,I_{2}$. From these, we derive $I_{1}\sigma+\sigma\lambda\beta+I_{2}\sigma\lambda=\sigma(I_{1}+\lambda\beta+I_{2}\lambda)=0$, which implies $\lambda=-\frac{I_{1}}{I_{2}+\beta}$. Since $\lambda=-\frac{2K_{1}+K_{2}}{2K_{2}+K_{1}}=-\frac{I_{1}}{I_{2}+\beta}<-1$, we obtain $I_{1}> I_{2}+\beta$. Thus there is a nonzero $\widetilde{B}$ such that
 $$\omega=\frac{\widetilde{B}z^{\alpha-1}}{\prod_{k=1}^{I_{1}}(z-a_{k})\prod_{l=1}^{I_{2}+1}(z-b_{l})}dz=\sigma(\sum_{k=1}^{I_{1}}\frac{1}{z-a_{k}}-\frac{\frac{\beta I_{1}}{I_{2}+\beta}}{z-b_{I_{2}+1}}-\frac{I_{1}}{I_{2}+\beta}\sum_{l=1}^{I_{2}}\frac{1}{z-b_{l}})dz.$$

\begin{proposition}\label{S-2-P-3}
Let $\alpha,\beta\geq 2,I_{1}\geq1$ and $I_{2}\geq0$ be integers such that $I_{1}+I_{2}=\alpha$ and $I_{1}>I_{2}+\beta$. Then, there exists a meromorphic 1-form $\omega$ on $S^{2}=\mathbb{C}\cup\{\infty\}$ defined as follows
\begin{equation}\label{S-2-E0}
\omega=\frac{Bz^{\alpha-1}}{\prod_{k=1}^{I_{1}}(z-a_{k})\prod_{l=1}^{I_{2}+1}(z-b_{l})}dz=(\sum_{k=1}^{I_{1}}\frac{1}{z-a_{k}}-\frac{\frac{\beta I_{1}}{I_{2}+\beta}}{z-b_{I_{2}+1}}-\frac{I_{1}}{I_{2}+\beta}\sum_{l=1}^{I_{2}}\frac{1}{z-b_{l}})dz
\end{equation}
where $B\in\mathbb{C}\setminus\{0\}$ is a constant and $a_{1},\ldots,a_{I_{1}},b_{1},\ldots,b_{I_{2}},b_{I_{2}+1}\in \mathbb{C}\setminus\{0,1\}$ are distinct complex numbers, if and only if one of the following conditions holds.\\
(1) $I_{2}=0$;\\
(2) $I_{2}\geq1,(I_{2}+\beta)\nmid I_{1}$ and $(I_{2}+\beta)I_{1} >(\alpha-1){\rm GCD}(I_{2}+\beta,I_{1})$.
\end{proposition}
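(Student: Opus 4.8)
The plan is to mimic the proof of Proposition \ref{S-2-P-2}, turning the existence of $\omega$ into a branched‑covering realizability problem on $\overline{\mathbb C}=\mathbb C\cup\{\infty\}$ to which Boccara's Theorem \ref{M-F-T-3} applies. I would first set $m=\mathrm{GCD}(I_1,I_2+\beta)$, $p=\tfrac{I_2+\beta}{m}$, $q=\tfrac{I_1}{m}$, $d=pI_1=q(I_2+\beta)=\mathrm{lcm}(I_1,I_2+\beta)$, and observe that the residues of $p\omega$ are $p$ at each $a_k$, $-q$ at each $b_l$ ($l\le I_2$) and $-\beta q$ at $b_{I_2+1}$, summing to $pI_1-q(I_2+\beta)=0$; hence $f:=\exp(\int p\omega)$ is a genuine rational function
$$f(z)=C\,\frac{\prod_{k=1}^{I_1}(z-a_k)^{p}}{(z-b_{I_2+1})^{\beta q}\prod_{l=1}^{I_2}(z-b_l)^{q}},\qquad C\in\mathbb C\setminus\{0\},$$
of degree exactly $d$, with $\infty$ an ordinary point. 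A meromorphic $1$‑form with the prescribed simple poles and residues is unique, and it acquires the required product shape $Bz^{\alpha-1}/\big(\prod(z-a_k)\prod(z-b_l)\big)\,dz$ precisely when its $\alpha-1$ zeros all sit at $z=0$; by this correspondence that happens if and only if $f$ has branch data
$$\Lambda=\Big\{(\underbrace{p,\dots,p}_{I_1})\ \text{over }0,\ \ (\beta q,\underbrace{q,\dots,q}_{I_2})\ \text{over }\infty,\ \ (\alpha,\underbrace{1,\dots,1}_{d-\alpha})\ \text{over a third value}\Big\}.$$

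For \textbf{sufficiency} I would split into the two cases. In case $(1)$, $I_2=0$, the middle term of $\omega$ drops out, $b_{I_2+1}=b_1$, $I_1=\alpha$, and $\omega=\big(\sum_{k=1}^{\alpha}\tfrac{1}{z-a_k}-\tfrac{\alpha}{z-b_1}\big)dz$; I would realize it directly by $f(z)=a\,z^{\alpha}/(z-b_1)^{\alpha}+b$ — a degree‑$\alpha$ map totally ramified over two values, hence M\"obius‑equivalent to $z\mapsto z^{\alpha}$ — because then $\tfrac{df}{f}$ has numerator $-a\alpha b_1z^{\alpha-1}$, and for generic $a,b,b_1$ the zeros of $az^{\alpha}+b(z-b_1)^{\alpha}$ are distinct and miss $\{0,1\}$. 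In case $(2)$, where $I_2\ge1$ and $(I_2+\beta)\nmid I_1$, I would check that $(I_2+\beta)\nmid I_1$ forces $p\ge2$ (so all three partitions in $\Lambda$ are non‑trivial), that $I_1+I_2=\alpha$ gives $v(\Lambda)=(d-I_1)+(d-I_2-1)+(\alpha-1)=2d-2$, and that $\mathrm{GCD}(p,\beta q,q)=\mathrm{GCD}(p,q)=\mathrm{GCD}(I_2+\beta,I_1)/m=1$; then Theorem \ref{M-F-T-3} (only its second alternative can apply here) yields a rational $f$ with branch data $\Lambda$ exactly when $\alpha-1<d$, i.e.\ $(I_2+\beta)I_1>(\alpha-1)\mathrm{GCD}(I_2+\beta,I_1)$, which is the stated hypothesis of $(2)$. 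After a standard M\"obius normalization writing $f$ in the displayed form with distinct $a_k,b_l\notin\{0,1\}$, the form $\tfrac{1}{p}\,\tfrac{df}{f}$ is the desired $\omega$.

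For \textbf{necessity} I would run the correspondence backwards: given such an $\omega$, if $I_2=0$ we are in case $(1)$; otherwise form $f$ as above, note $0\notin\{a_k,b_l,b_{I_2+1}\}$ so that a short computation makes $z=0$ a ramification point of $f$ of index $\alpha$, and conclude $\alpha\le\deg f=d$, hence $\alpha-1<d$, i.e.\ $(I_2+\beta)I_1>(\alpha-1)\mathrm{GCD}(I_2+\beta,I_1)$. If, in addition, $(I_2+\beta)\mid I_1$, then $p=1$ and $d=I_1$, forcing $\alpha\le I_1$, which contradicts $\alpha=I_1+I_2\ge I_1+1$; so $(I_2+\beta)\nmid I_1$ and condition $(2)$ holds.

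The step I expect to demand the most care — exactly as in Proposition \ref{S-2-P-2} — is pinning down the branch data $\Lambda$ correctly, in particular distinguishing the pole $b_{I_2+1}$ of multiplicity $\beta q$ from the $I_2$ poles of multiplicity $q$, and verifying that the constraint $I_1+I_2=\alpha$ always forces $v(\Lambda)=2d-2$; this is precisely what confines us to Boccara's second alternative and hands us the numerical inequality. The degenerate case $I_2=0$, where $\Lambda$ would lose its first branch point and Boccara's hypotheses fail, has to be treated separately by the explicit family displayed above.
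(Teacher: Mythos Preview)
Your proposal is correct and follows essentially the same route as the paper: both reduce the existence of $\omega$ to realizing the branch data $\Lambda=\{(\underbrace{p,\ldots,p}_{I_1}),(\beta q,\underbrace{q,\ldots,q}_{I_2}),(\alpha,1,\ldots,1)\}$ by a rational map via $f=\exp\!\big(\int \tfrac{I_2+\beta}{m}\,\omega\big)$ and invoke Boccara's Theorem~\ref{M-F-T-3}. Your write-up is slightly more explicit than the paper's in two places---you actually verify $v(\Lambda)=2d-2$ and $\mathrm{GCD}(p,\beta q,q)=1$, and you give a concrete family for the degenerate case $I_2=0$ where the paper simply says ``a direct calculation verifies the result''---but the architecture and the key lemma are identical.
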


\begin{proof}
\textbf{Sufficiency}\par
If $I_{2}=0$,  a direct calculation verifies the result.\par
For $I_{2}\geq1,(I_{2}+\beta)\nmid I_{1}$ and $(I_{2}+\beta)I_{1} >(\alpha-1){\rm GCD}(I_{2}+\beta,I_{1})$, set $m={\rm GCD}(I_{2}+\beta,I_{1})$. Then, by \textbf{Theorem} \ref{M-F-T-3}, there exists a meromorphic function $f:\mathbb{C}\cup\{\infty\}\rightarrow \mathbb{C}\cup\{\infty\}$ of degree $\frac{I_{1}(I_{2}+\beta)}{m}$ and with branch data $$\Lambda=\{(\underbrace{\frac{I_{2}+\beta}{m},\ldots,\frac{I_{2}+\beta}{m}}_{I_{1}}),(\frac{I_{1}\beta}{m},\underbrace{\frac{I_{1}}{m},\ldots,\frac{I_{1}}{m}}_{I_{2}}),(\alpha,1,\ldots,1)\}.$$
Without loss of generality, we can express $f$ as
$$f(z)=\frac{C\prod_{k=1}^{I_{1}}(z-a_{k})^{\frac{I_{2}+\beta}{m}}}{(z-b_{I_{2}+1})^{\frac{I_{1}\beta}{m}}\prod_{l=1}^{I_{2}}(z-b_{l})^{\frac{I_{1}}{m}}}$$
where $a_{1},\ldots,a_{I_{1}},b_{1},\ldots,b_{I_{2}+1}\in \mathbb{C}\setminus\{0\}$ are distinct complex numbers and $C\neq0$ is a constant. Then, a direct calculation show that
$$\frac{m}{I_{2}+\beta}\cdot\frac{df}{f}=\frac{Bz^{\alpha-1}}{\prod_{k=1}^{I_{1}}(z-a_{k})\prod_{l=1}^{I_{2}+1}(z-b_{l})}dz=(\sum_{k=1}^{I_{1}}\frac{1}{z-a_{k}}-\frac{\frac{\beta I_{1}}{I_{2}+\beta}}{z-b_{I_{2}+1}}-\frac{I_{1}}{I_{2}+\beta}\sum_{l=1}^{I_{2}}\frac{1}{z-b_{l}})dz,$$
where $B\neq0$ is a constants, is the desired meromorphic 1-form.\par
\textbf{Necessity}\par
Suppose there exists a meromorphic 1-form on $S^{2}=\mathbb{C}\cup\{\infty\}$ defined by the form
 $$\omega=\frac{Bz^{\alpha-1}}{\prod_{k=1}^{I_{1}}(z-a_{k})\prod_{l=1}^{I_{2}+1}(z-b_{l})}dz=(\sum_{k=1}^{I_{1}}\frac{1}{z-a_{k}}-\frac{\frac{\beta I_{1}}{I_{2}+\beta}}{z-b_{I_{2}+1}}-\frac{I_{1}}{I_{2}+\beta}\sum_{l=1}^{I_{2}}\frac{1}{z-b_{l}})dz$$
where $a_{1},\ldots,a_{I_{1}},b_{1},\ldots,b_{I_{2}+1}\in \mathbb{C}\setminus\{0\}$ are distinct complex numbers and $B\in\mathbb{C}\setminus\{0\}$ is a constant.\par
If $I_{2}=0$, there is nothing to prove. If $I_{2}\geq1$ and $(I_{2}+\beta)\mid I_{1}$, then
$$f(z)=exp(\int \omega)=\frac{C\prod_{k=1}^{I_{1}}(z-a_{k})}{(z-b_{I_{2}+1})^{\frac{I_{1}\beta}{I_{2}+\beta}}\prod_{l=1}^{I_{2}}(z-b_{l})^{\frac{I_{1}}{I_{2}+\beta}}}$$
is a meromorphic function of degree $I_{2}$ on $\mathbb{C}\cup\{\infty\}$, where $C\neq0$ is a constant. Since $\omega=\frac{df}{f}$, we obtain $I_{1}>\alpha-1$. However, $I_{1}+I_{2}=\alpha$. It is a contradiction.\par

Suppose $I_{2}\geq1$ and $(I_{2}+\beta)\nmid I_{1}$. Set $m={\rm GCD}(I_{2}+\beta,I_{1})$, then
$$f(z)=exp(\int \frac{I_{2}+\beta}{m}\omega)=\frac{C\prod_{k=1}^{I_{1}}(z-a_{k})^{\frac{I_{2}+\beta}{m}}}{(z-b_{I_{2}+1})^{\frac{I_{1}\beta}{m}}\prod_{l=1}^{I_{2}}(z-b_{l})^{\frac{I_{1}}{m}}}$$
is a meromorphic function of degree $\frac{I_{1}(I_{2}+\beta)}{m}$ on $\mathbb{C}\cup\{\infty\}$, where $C\neq0$ is a constant. The derivative of $f$ is
$$f'(z)=\frac{CBz^{\alpha-1}\prod_{k=1}^{I_{1}}(z-a_{k})^{\frac{I_{2}+\beta}{m}-1}}{(z-b_{I_{2}+1})^{\frac{I_{1}\beta}{m}+1}\prod_{l=1}^{I_{2}}(z-b_{l})^{\frac{I_{1}}{m}+1}}.$$
Then $\frac{(I_{2}+\beta)I_{1}}{m} >(\alpha-1)$, i.e., $(I_{2}+\beta)I_{1} >(\alpha-1){\rm GCD}(I_{2}+\beta,I_{1})$.
\end{proof}

By \textbf{Proposition} \ref{S-2-P-3} and \textbf{Theorem} \ref{from 1-form to HCMU}, there exists a non-CSC HCMU metric $g$ on $S^{2}_{\{\alpha,\beta\}}$ such that the singularity of singular angle $2\pi\alpha$ is the saddle point of the  Gaussian curvature $K$ and the singularity of singular angle $2\pi\beta$ is a minimum point of $K$.\par
\section*{Acknowledgments}
~~~~
We would like to express our gratitude to the reviewers for their constructive comments and suggestions, which have helped improve the quality and clarity of this manuscript. Wei expresses his deep gratitude to Professor Yingyi Wu and Professor Bin Xu for their invaluable encouragement and the insightful discussions that greatly contributed to this work.
This work is supported by the National Natural Science Foundation of Henan (Grant No. 202300410067) and partially supported by the National Natural Science Foundation of China (No. 12171140).

\textbf{Declarations}

\textbf{Data Availability Statement}  This manuscript has no associated data.

\textbf{Competing interests} On behalf of all authors, the corresponding author states that there is no conflict of interest.


\par\vskip0.3cm
\noindent
Yingjie Meng\\
School of Mathematics and Statistics \\
Henan University  \\
Kaifeng 475004 P.R. China \\
mengyingjie@henu.edu.cn\\
\\
Zhiqiang Wei\\
School of Mathematics and Statistics \\
Henan University  \\
Kaifeng 475004 P.R. China \\
Center for Applied Mathematics of Henan Province\\
Henan University\\
Zhengzhou 450046 P.R. China\\
10100123@vip.henu.edu.cn\\
weizhiqiang15@mails.ucas.ac.cn\\
\end{document}